\theoremstyle{plain}
\newtheorem{lem}{Lemma}[section]
\newtheorem{prop}[lem]{Proposition}
\newtheorem{thm}[lem]{Theorem}
\newtheorem{cor}[lem]{Corollary}
\theoremstyle{definition}
\theoremstyle{remark}
\DeclareMathOperator{\sgn}{sgn}{\Large {\normalsize }}
\DeclareMathOperator{\cls}{cls}
\DeclareMathOperator{\rank}{rank}
\DeclareMathOperator{\mass}{mass}
\DeclareMathOperator{\ord}{ord}
\DeclareMathOperator{\sym}{sym}
\DeclareMathOperator{\e}{e}
\DeclareMathOperator{\mult}{mult}
\DeclareMathOperator{\diag}{diag}
\DeclareMathOperator{\gen}{gen}
\DeclareMathOperator{\Tr}{tr}
\newcommand{\bmu}{\boldsymbol \mu}
\newcommand{\bdelta}{\boldsymbol \delta}
\newcommand{\bbeta}{\boldsymbol \beta}
\newcommand{\balpha}{\boldsymbol \alpha}
\newcommand{\boldeta}{\boldsymbol \eta}
\newcommand{\calP}{{\mathfrak P}}
\newcommand{\I}{{\mathcal I}}
\newcommand{\kfld}{{\mathbb K}}
\newcommand{\Z}{\mathbb Z}
\newcommand{\Q}{\mathbb Q}
\newcommand{\F}{\mathbb F}
\newcommand{\R}{\mathbb R}
\newcommand{\C}{\mathbb C}
\newcommand{\stufe}{\mathcal N}
\newcommand{\h}{\mathfrak H}
\newcommand{\K}{\mathcal K}
\newcommand{\M}{\mathcal M}
\newcommand{\Y}{\mathcal Y}
\newcommand{\calL}{\mathcal L}
\newcommand{\calU}{\mathcal U}
\newcommand{\calQ}{\mathfrak Q}
\newcommand{\frakD}{\mathfrak D}
\newcommand{\n}{\mathfrak N}
\newcommand{\Ok}{\mathcal O}
\newcommand{\q}{\mathfrak q}
\newcommand{\calA}{\mathcal A}
\newcommand{\scale}{\text{scale}}
\newcommand{\norm}{\text{norm}}
\begin{document}

\title[Hecke operators on Hilbert-Siegel theta series]
{Hecke operators on Hilbert-Siegel theta series}

\author{Dan Fretwell}
\address{School of Mathematics, University of Bristol, University Walk, Clifton, Bristol BS8 1TW, United Kingdom}
\email{dan.fretwell@bristol.ac.uk}
\author{Lynne Walling}
\address{School of Mathematics, University of Bristol, University Walk, Clifton, Bristol BS8 1TW, United Kingdom;
phone +44 (0)117 331-5245}
\email{l.walling@bristol.ac.uk}


\keywords{theta series, quadratic forms, Hilbert modular forms, Siegel modular forms}

\begin{abstract} 
We consider the action of Hecke-type operators on Hilbert-Siegel theta series attached to lattices of even rank.  We show that average Hilbert-Siegel theta series are eigenforms for these operators, and we explicitly compute the eigenvalues.
\end{abstract}

\maketitle
\def\thefootnote{}
\footnote{2010 {\it Mathematics Subject Classification}: Primary
11F46, 11F11 }
\def\thefootnote{\arabic{footnote}}

\section{Introduction}
Siegel theta series help us study quadratic forms on lattices, as their Fourier coefficients carry information about the structures of their sublattices.  Hecke operators help us study Fourier coefficients of modular forms.  Here we consider Hecke-type operators on Hilbert-Siegel theta series, showing that the average Hilbert-Siegel theta series is an eigenform for these operators, and we explicitly compute the eigenvalues.

With $\kfld$ a totally real number field with ring of integers $\Ok$ and $L$ a lattice of rank $m$ over $\Ok$, we do not know that $L$ is a free $\Ok$-module, but we do have
$$L=\calA_1 x_1\oplus\cdots\oplus\calA_m x_m$$
where $\calA_1,\ldots,\calA_m$ are (nonzero) fractional ideals and $x_1,\ldots,x_m$ are vectors in the space
$\kfld L=\kfld x_1\oplus\cdots\oplus\kfld x_m$.  We equip $L$  with a totally positive quadratic form $\q$ (so $\q(x)\gg0$ for all $x\in L$).  To build a degree $n$ Hilbert-Siegel theta series associated to $L$, we set
$\calL=\big<\calA_1,\ldots,\calA_m\big>\Ok^{m,n}$ where
$\big<\calA_1,\ldots,\calA_m\big>$ is shorthand for $\diag\{\calA_1,\ldots,\calA_m\}.$
We set $Q=\big(B_{\q}(x_i,x_j)\big)$ where $B_{\q}$ is the symmetric bilinear form associated to $\q$ so that $\q(x)=B_{\q}(x,x)$.  Then we set
$$\theta(L;\tau)=\theta(\calL;\tau)=\sum_{U\in\calL}\e\{Q[U]\tau\}$$
where $Q[U]=\,^tUQU$, $\tau$ is a suitable complex variable, and $\e\{*\}$ is a suitable exponential function (defined below).  Note that with $U\in\calL$ and $(y_1\,\cdots\,y_n)=(x_1\,\cdots\,x_m)U$, each $y_i$ lies in $L$ and $\big(B_{\q}(y_i,y_j)\big)=Q[U]$; so the Fourier coefficients of $\theta(L;\tau)$
tell us how often $L$ ``represents" any given quadratic form $T$ of dimension $n$, meaning the number of sublattices of $L$ that inherit $T$ as a quadratic form.

The goal of this paper is to show that when $m$ is even with $m=2k$,
 the average Hilbert-Siegel theta series $\theta(\gen L;\tau)$ (defined in Section 5)
is an eigenform for certain Hecke-type operators, yielding relations on average representation numbers of the quadratic form $\q$ on $L$.  This generalizes what was done in \cite{theta I} and \cite{theta II} where the number field was $\mathbb Q$; many of the arguments in this previous work are local, and those arguments generalize to the number field setting quite easily.  The main work in this paper is to realize the action of the Hecke-type operators on $\theta(L;\tau)$ in terms of $L$; this involves the use and understanding of auxiliary theta functions, defined as follows.

For (nonzero) fractional ideals $\I_1,\ldots,\I_n$, we have the lattice
$$\calL'=\calL\big<\I_1,\ldots,\I_n\big>
=\big<\calA_1,\ldots,\calA_m\big>\Ok^{m,n}\big<\I_1,\ldots,\I_n\big>$$
(so for $U'\in\calL'$ and $(z_1\,\cdots\,z_n)=(x_1\,\cdots\,x_m)U'$, we have $z_{\ell}\in\I_{\ell}L$ for $1\le\ell\le n$).  We set
$$\theta(\calL';\tau)=\sum_{U'\in\calL'}\e\{Q[U']\tau\}.$$
In Section 2, we first extend an algebraic trick of Eichler \cite{Eichler} to establish an Inversion Formula
(Theorem 2.4) for 
a very general theta series.
From hereon, we assume $m$ is even with $m=2k$; then
with $\calL'$ as above, variations on standard techniques show that $\theta(\calL';\tau)$ is a Siegel modular form of degree $n$ and weight $k$, for a group of some level $\stufe$ and character $\chi$ (defined below; note that $\stufe$ and $\chi$ are determined by $L$, and $\theta(\calL';\tau)$ transforms under a group determined by $\I_1,\ldots,\I_n$ as well as invariants of $L$).

In Section 3 we show how $\theta(\calL;\tau)$ and $\theta(\calL';\tau)$ are related through maps $S_{\ell}(\calQ)$ ($\calQ$ a fractional ideal); proving this requires the generalized Inversion Formula (Theorem 2.4).
In Section 4 we evaluate
$$\theta(L;\tau)|S_1(\calP^{-1})\cdots S_j(\calP^{-1})T_j(\calP^2)$$
where $\calP$ is a prime ideal not dividing the level of $L$ and $T_j(\calP^2)$ is a Hecke operator
with $1\le j\le n$ (defined in Section 4); we consider this image of $\theta(L;\tau)$ as it lies in the same space as $\theta(L;\tau)$.
We first describe $\theta(L;\tau)|S_1(\calP^{-1})\cdots S_j(\calP^{-1})T_j(\calP^2)$ in terms of lattices
$$\calL_{r_0,r_2}=\calL\big<\calP^{-1} I_{r_0},I_{n-r_0-r_2},\calP I_{r_2}\big>$$
with $r_0,r_2$ varying so that $r_0+r_2\le j$
(Proposition 4.1).  We then partition each $\calL_{r_0,r_2}$ into equivalence classes, and partition each equivalence class into orbits of a group $\K_{r_0,r_2}$ 
(defined in Section 4) 
to get a computationally convenient description of $\theta(\calL_{r_0,r_2};\tau)$ (Proposition 4.2).
Then we can write $\theta(L;\tau)|S_1(\calP^{-1})\cdots S_j(\calP^{-1})T_j(\calP^2)$ as a sum over lattices $\Omega\subseteq\calP^{-1} L$ (Proposition 4.4).  As this sum involves incomplete character sums, we complete them in Theorem 4.5 by replacing the operator
$S_1(\calP^{-1})\cdots S_j(\calP^{-1})T_j(\calP^2)$
by $\widetilde T_j(\calP^2)$,
 a linear combination of the operators
 $S_1(\calP^{-1})\cdots S_{\ell}(\calP^{-1})T_{\ell}(\calP^2)$, $0\le\ell\le j$.
Then in Section 5 we appeal to the (primarily local) arguments in \cite{theta I} and \cite{theta II} that carry over almost directly to the number field setting.  With $\{T'_j(\calP^2):\ 0\le j\le n\ \}$ a specific generating set for
the algebra generated by  $\{\widetilde T_j(\calP^2):\ 0\le j\le n\ \}$
(where $T'_0(\calP^2)=\widetilde T_0(\calP^2)$ is the identity map), we show that for $\calP$
a prime ideal not dividing the level of $L$,
$$\theta(\gen L;\tau)|T'_j(\calP^2)=\lambda_j(\calP^2)\theta(\gen L;\tau)$$
where
$$\lambda_j(\calP^2)=
\begin{cases}
N(\calP)^{j(k-n)+j(j-1)/2}\bbeta(n,j)\bdelta(k-1,j)
&\text{if $\chi(\calP)=1$ and $j\le k$,}\\
N(\calP)^{j(k-n)+j(j-1)/2}\bbeta(n,j)\bmu(k-1,j)
&\text{if $\chi(\calP)=-1$ and $j<k$,}\\
0&\text{otherwise}\end{cases}$$
(Corollary 5.4).
Here $\bbeta(n,j)$ is the number of $j$-dimensional subspaces of an $n$-dimensional space over $\Ok/\calP$,
and
\begin{align*}
\bdelta(k-1,j)&=(N(\calP)^{k-1}+1)\cdots(N(\calP)^{k-j}+1),\\
\bmu(k-1,j)&=(N(\calP)^{k-1}-1)\cdots(N(\calP)^{k-j}-1).
\end{align*}

Note that while herein we assume $m$ is even, in a subsequent paper we demonstrate how to adjust these computations to allow $m$ odd.

\bigskip
\noindent{\bf Notation.}  
Throughout, we take $\kfld$ to be a totally real number field with ring of integers $\Ok$
and different $\partial$.
Except for Sections 2.1 and 2.2,
we take $m\in\Z_+$ to be even with $m=2k$, and we fix fractional ideals $\calA_1,\ldots,\calA_m$ and vectors $x_1,\ldots,x_m$ so that $L=\calA_1x_1\oplus\cdots\oplus\calA_mx_m$ is a fixed lattice.
We fix $n\in\Z_+$ and set
$$\calL=\big<\calA_1,\ldots,\calA_m\big>\Ok^{m,n}.$$
We take $\q$ to be a totally positive quadratic form on $L$ whose associated symmetric bilinear form $B_{\q}$ satisfies $\q(x)=B_{\q}(x,x)$; we set
$Q=\big(B_{\q}(x_i,x_j)\big)$.

When $L'$ is a free lattice with quadratic form $\q'$ given by a matrix $Q'$ (relative to some $\Ok$-basis for $L'$),
we write $L'\simeq Q'$, and we say $L'$ is isometric to $Q'$.
The discriminant of $L'$ is $\det Q'$, which is well-defined up to squares of units in $\Ok$.

The complement of $L$ is
$$\widetilde L=\{v\in\kfld L:\ \Tr(B_{\q}(v,L))\subseteq\Z\ \}$$
where $tr$ denotes the trace from $\kfld$ to $\Q$.
So with $(x_1'\,\cdots\,x_m')=(x_1\,\cdots\,x_m)Q^{-1}$,
$\widetilde L=\partial^{-1}(\calA_1^{-1}x_1'\oplus\cdots\oplus\calA_m^{-1}x_m').$
We define $\norm L$ to be the fractional ideal generated by $\{\q(x):\ x\in L\ \}$,
and we define $\scale L$ to be the fractional ideal generated by $\{B_{\q}(x,y):\ x,y\in L\ \}$.
Note that $2\scale L\subseteq \norm L\subseteq\scale L$.
With $\n=\n(L)=\frac{1}{2}\norm L,$ we say $L$ is even $\n$-integral, meaning that for all $x,y\in L$ we have $\q(x)\in 2\n$ and $B_{\q}(x,y)\in\n$.
We define the level of $L$ to be
$$\stufe=4(\norm L\cdot\norm L^{\#})^{-1},$$
which is an integral ideal.  For $\calP$ a prime ideal, we have $\calP\nmid\stufe$ if and only if $\Ok_{\calP}L$ is $\Ok_{\calP}\n$-modular, meaning that with $\eta',\alpha_i\in\kfld$ so that
$\eta'\Ok_{\calP}=\n^{-1}\Ok_{\calP}$ and $\alpha_i\Ok_{\calP}=\calA_i\Ok_{\calP}$ ($1\le i\le m$), 
the matrix $\eta' Q[\big<\alpha_1,\ldots,\alpha_m\big>]$ is invertible over $\Ok_{\calP}$.
(Note that when $\calP|2\Ok$, the matrix $\eta' Q[\big<\alpha_1,\ldots,\alpha_m\big>]$ necessarily has diagonal entries in $2\Ok_{\calP}$.)

Let $\h_{(n)}=\{X+iY:\ X,Y\in\R^{n,n}_{\sym},\ Y>0\ \}$, where $\R^{n,n}_{\sym}$ denotes the set of symmetric $n\times n$ matrices over $\R$, and $Y>0$ denotes that as a quadratic form, $Y$ is positive definite.  With $d$ the strict ideal class number of $\kfld$ and $\psi_1,\ldots,\psi_d$ the embeddings of $\kfld$ into $\R$, for $\tau=(\tau_1,\ldots,\tau_h)\in\h_{(n)}^d$ and $M\in\kfld^{n,n}_{\sym}$, we set $M^{(i)}=\psi_i(M)$,
$$\Tr(M\tau)=\sum_{i=1}^d M^{(i)}\tau_i,$$
and we set $\e\{M\tau\}=\exp(\pi i\sigma(\Tr(M\tau)))$ where $\sigma$ denotes the trace of a matrix.

In Section 2 we will see that with $\I_1,\ldots,\I_n$ fractional ideals and
$\calL'=\calL\big<\I_1,\ldots,\I_n\big>$, the theta series
$$\theta(\calL';\tau)=\sum_{U'\in\calL'}\e\{Q[U']\tau\}$$
is a modular form for the group $\Gamma_0(\stufe;\I_1,\ldots,\I_n;\n)$ defined as follows.
Set $X=\big<\I_1,\ldots\I_n\big>$, $X^{-1}=\big<\I_1^{-1},\ldots\I_n^{-1}\big>$, and set
\begin{align*}
&\Gamma_0(\stufe;\I_1,\ldots,\I_n;\n)
=\Gamma^{(n)}_0(\stufe;\I_1,\ldots,\I_n;\n)\\
&\quad=
\begin{pmatrix}(\n\partial X)^{-1}\\&X\end{pmatrix}
\begin{pmatrix}\Ok^{n,n}&\Ok^{n,n}\\ \stufe\Ok^{n,n}&\Ok^{n,n}\end{pmatrix}
\begin{pmatrix}\n\partial X\\&X^{-1}\end{pmatrix}
\cap Sp_n(\kfld).
\end{align*}
(Here $Sp_n(\kfld)$ denotes the group of $2n\times 2n$ symplectic matrices over $\kfld$.)
For $\chi$ a character modulo $\stufe$, we write
$\M_k(\Gamma_0(\stufe;\I_1,\ldots,\I_n;\n),\chi)$ to denote the space of
(Hilbert-Siegel) modular forms for the group
$\Gamma_0(\stufe;\I_1,\ldots,\I_n;\n)$ with character $\chi$, meaning the set of
 analytic functions from $\h_{(n)}^d$ to $\C$ so that for all
$\gamma=\begin{pmatrix}A&B\\C&D\end{pmatrix}\in\Gamma_0(\stufe;\I_1,\ldots,\I_n;\n)$,
$$f|\gamma(\tau)=\chi(\det D) f(\tau)$$
where $f|\gamma(\tau)=\det(N(C\tau+D))^{-k} f((A\tau+B)(C\tau+D)^{-1})$
and
$N(C\tau+D)=\prod_{i=1}^d (C^{(i)}\tau_i+D^{(i)}).$
We will also see that the character associated to $\theta(\calL';\tau)$ as a modular form is
$\chi_L$, defined as follows.

First take $\calP$ to be a prime ideal with $\calP\nmid2\Ok$.  
By Section 92 \cite{O'M}, a binary unimodular $\Ok_{\calP}$-lattice is isometric to either a hyperbolic plane (given by the matrix $\big<1,-1\big>$) or an anisotropic plane (given by the matrix $\big<1,-\omega\big>$ where $\omega$ is a non-square unit in $\Ok_{\calP}$); a dimension $2k$ unimodular $\Ok_{\calP}$-lattice is isometric to either an orthogonal sum of $k$ hyperbolic planes, or an orthogonal sum of $k-1$ hyperbolic planes and an anisotropic plane.  Thus the Lengendre-type symbol $\left(\frac{*}{\calP}\right)$ given by
$$\left(\frac{y}{\calP}\right)=
\begin{cases}1&\text{if $y$ is a square modulo $\calP$,}\\
-1&\text{otherwise}
\end{cases}$$
(where $y\in\Ok\smallsetminus\calP$) allows us to distinguish these two types of dimension $2k$ 
lattices over $\Ok_{\calP}$.

Next take $\calP$ to be a prime ideal with $\calP|2\Ok$; let $\calU=\Ok_{\calP}^{\times}$.
By 93:11 \cite{O'M}, a binary even unimodular lattice over $\Ok_{\calP}$ is either a hyperbolic plane (given by the matrix $\begin{pmatrix}0&1\\1&0\end{pmatrix}$) or an even anisotropic plane (given by the matrix
$\begin{pmatrix}2&1\\1&2\epsilon\end{pmatrix}$ where $1+4\epsilon$ has quadratic defect $4\Ok_{\calP}$;
note that by Section 63A \cite{O'M}, $1-4\epsilon$ also has quadratic defect $4\Ok_{\calP}$, so in particular,
$1-4\epsilon$ is not a square, and the set of units in $\Ok_{\calP}$ with quadratic defect $4\Ok$
is $(1-4\epsilon)\calU^2$).
By 93:18(ii) \cite{O'M}, 
a dimension $2k$ even unimodular $\Ok_{\calP}$-lattice is isometric to either an orthogonal sum of $k$ hyperbolic planes, or an orthogonal sum of $k-1$ hyperbolic planes and an even anisotropic plane.
We want to choose a quadratic character modulo some power of $\calP$
that will distinguish a hyperbolic plane and an even anisotropic plane over $\Ok_{\calP}$.  
So we want to construct a homomorphism $\varphi:\calU/\calU^2\to \{\pm1\}$ so that
$\varphi((1-4\epsilon)\calU^2)=-1.$
By 63:9 \cite{O'M}, $|\calU/\calU^2|=2([\Ok_{\calP}:\calP\Ok_{\calP}])^{\ord_{\calP}(2)}$, so $\calU/\calU^2$ is a 2-group with at least 4 elements, in which every element is its own inverse.  Thus with 
$|\calU/\calU^2|=2^d$, we can choose $(1-4\epsilon)\calU^2$ together with $d-1$ other elements to generate $\calU/\calU^2$, and hence we can build $2^{d-1}$ homomorphisms $\varphi$ that map 
$(1-4\epsilon)\calU^2$ to $-1$.  Such a homomorphism $\varphi$ corresponds to a choice of a
Kronecker-type symbol
that (by 63:1 \cite{O'M}) is a quadratic character modulo $4\calP$ with
$$\left(\frac{y}{\calP}\right)_{\varphi}=\varphi(y\calU^2)$$
($y\in\calU$).
We fix a choice of $\varphi$ and write $\left(\frac{y}{\calP}\right)$ for 
$\left(\frac{y}{\calP}\right)_{\varphi}$.

Now suppose that $\calP\nmid \stufe$; fix $\eta'\in\n^{-1}$ so that $\eta'\Ok_{\calP}=\n^{-1}\Ok_{\calP}$, and for $1\le i\le m$, fix $\alpha_i\in\calA_i$ so that $\alpha_i\Ok_{\calP}=\calA_i\Ok_{\calP}$. Set
$$\chi^*(\calP)=\chi^*_L(\calP)
=\left(\frac{(-1)^k(\eta')^{m}(\alpha_1\cdots\alpha_m)^2\det Q}{\calP}\right);$$
note that by Section 93 of \cite{O'M}, when $\calP|2\Ok$ (and $\calP\nmid\stufe$) we have that
$$(-1)^k(\eta')^{m}(\alpha_1\cdots\alpha_m)^2\det Q\text{ lies in }\calU^2 \text{ or }
(1-4\epsilon)\calU^2,$$
and hence $\chi^*(\calP)$ is independent of the choice of Kronecker-type symbol chosen above.

Now we extend $\chi^*$ completely multiplicatively to all fractional ideals $\I$ that are relatively prime to $\stufe$ (where $\I$ relatively prime to $\stufe$ means that for any prime $\calP'|\stufe$, $\ord_{\calP'}\I=0$).  Then for $y\in\Ok$ with $y\Ok$ relatively prime to $\stufe$, we set
$$\chi(y)=\chi_L(y)=(N(\sgn y))^k\chi^*(y\Ok)$$
where $N(\sgn y)=\prod_{i=1}^d sgn(\psi_i(y)).$

Note that for $\gamma\in\Gamma_0(\stufe;\I_1,\dots,\I_n;\n)$ we sometimes write $\chi_L(\gamma)$ to denote $\chi_L(\det D_{\gamma})$.

We define the maps $S_{\ell}(\calQ)$ in Section 3, and the maps $T_j(\calP^2)$ in Section 4.

\bigskip

\section{Hilbert-Siegel theta series}
In this section we will introduce Hilbert-Siegel theta series and prove the transformation formula for such series. In order to do this we will first need an inversion formula. We prove this by generalizing a 
trick of Eichler \cite{Eichler}, used in the Hilbert modular case.
Note that we will be realizing a Hilbert-Siegel theta series as a Siegel theta series over the rationals, and so by Theorem 1.1.4 \cite{And}, we know the the Hilbert-Siegel theta series is analytic.

\begin{subsection}{Inversion formula: local notation and setup.}

Let $\gamma_1,\ldots,\gamma_d$ be an integral basis for $\Ok$ (meaning that $\Ok=\Z\gamma_1+\cdots+\Z\gamma_d$).
As alluded to above, we have fixed an ordering of the real embeddings $\psi_1,...,\psi_d: \kfld\hookrightarrow\R$
and we write 
$\alpha^{(w)}$ to denote $\psi_w(\alpha)$. 
The maps $\psi_w$ induce linear embeddings $\kfld^{a,b}\hookrightarrow \R^{a,b}$ for any $a,b\geq 1$ by applying $\psi_w$ component-wise. For ease of notation we write $U^{(w)} = \psi_w(U)$ for $U\in \kfld^{a,b}$ and refer to these as the conjugates of $U$. For $U\in \kfld^{a,b}$ we define $\Tr(U) = \sum_{w=1}^dU^{(w)}$ (i.e. apply the field trace on each entry). 

Fix $m,n\geq 1$ and set $V = \kfld^{m,n}$. 
Let $M\in\kfld^{m,m}_{\sym}$ be a totally positive definite matrix, corresponding to a quadratic form 
$\q'$ on $\kfld^m$. This also gives rise to a symmetric bilinear form $B_{\q'}$ on $\kfld^m$ 
(taking the convention that $B_{\q'}(x,x) = \q'(x)$).
After scaling we may assume that $M$ is integral with even diagonal. 
Given $U,W\in V$ we write $M[U,W] = {^t}UMW\in \kfld^{n,n}_{\sym}$ and $M[U] = {^t}UMU\in \kfld^{n,n}$. 
Then considering $V$ as both a $\kfld$-vector space and a $\Q$-vector space gives quadratic forms $M_{V,\kfld}, M_{V,\Q}$ respectively, given by $M_{V,\kfld}(U) = \sigma(M[U])$ and $M_{V,\Q}(U) = \sigma(\Tr(M[U]))$, where
we recall that $\sigma$ is the matrix trace. The corresponding symmetric bilinear forms are $B_{V,\kfld}(U,W) = \sigma(M[U,W])$ and $B_{V,\Q}(U,W) = \sigma(\Tr(M[U,W]))$.

Now consider an $\Ok$-lattice $\calL'\subseteq V$ of rank $mn$
(where $m$ is not necessarily even). We do not assume that $\calL'$ is free. However restricting scalars to $\Z$ we must obtain a free $\Z$-lattice of rank $mnd$. Let $U_1, U_2, ... , U_{mnd}$ be a $\Z$-basis for $\calL'$. We may restrict the above quadratic forms to $\calL'$, writing $M_{\calL',\kfld}$ and $M_{\calL',\Q}$ to emphasize this.
%
%
Attached to $\calL'$ is a theta series: \[\theta(\calL';\tau) = \sum_{U\in\calL'}e^{\pi i M_{\calL,\Q}(U\tau)} = \sum_{U\in\calL'} e\{M_{\calL',\kfld}[U]\tau\}.\]
We will prove an inversion formula for this very general theta series; then we restrict our 
attention to the lattices described in the introduction, and prove that these are Hilbert-Siegel modular forms.
Note that since $\theta(\calL';\tau)$ gets identified as a Siegel theta series, we know from Theorem 1.1.4 \cite{And} that $\theta(\calL';\tau)$ is analytic.


\end{subsection}

\begin{subsection}{The inversion formula for $\theta(\calL';\tau)$.}

To relate $\theta(\calL';\tau)$ to $\theta(\widetilde{\calL'},-\tau^{-1}),$
we first consider the quadratic form $M_{V,\Q}$ in more detail. We will show that there is a matrix $Z_1(\tau)\in \h_{(mnd)}$ and an isomorphism $\phi':V \rightarrow\Q^{mnd}$ satisfying: \[M_{V,\Q}[U]\tau = {^t}\phi'(U) Z_1(\tau)\phi'(U),\] i.e. that $M_{V,\Q}$ is isometric to the quadratic form on $\Q^{mnd,1}$ with Gram matrix $Z_1(I)$. For $n=1$ this is precisely the idea behind Eichler's trick for establishing the inversion formula for Hilbert theta series.

Before stating the result we must make a few definitions. First we construct the matrix $G = (\gamma_j^{(i)})\in \R^{d,d}$. Given $G$ we then form the Kronecker product $G' = I_n \otimes G \otimes I_m\in \R^{mnd,mnd}$.

Consider the linear map $\phi: \kfld^{m,1} \rightarrow \Q^{md,1}$ that sends column vector \[\textbf{u} = {^t}(u_1,...,u_m) = \sum_{i=1}^m u_i \textbf{e}_i = \sum_{i,j} u_{i,j}\textbf{e}_i \gamma_j\] to \[(u_{1,1},u_{2,1},...,u_{m,1},u_{1,2},...,u_{m,2},...,u_{m,d})\in\Q^{md}.\] This map extends to a linear map $\phi: V \rightarrow \Q^{mnd}$ via $\phi(U) = (\phi(\textbf{u}_1),...,\phi(\textbf{u}_n))$, where $\textbf{u}_1,...,\textbf{u}_n$ are the columns of $U$. Let $\phi'(U) = {^t}\phi(U)$.

We see that there is a strong link between the conjugates of $U\in V$ and the vector $G'\phi'(U)\in\Q^{mnd,1}$.

\begin{lem}
For any $U\in V$ with columns $\emph{\textbf{u}}_1,...,\emph{\textbf{u}}_n$ we have \[G'\phi'(U) = {^t}(\emph{\textbf{v}}_1^{(1)},\emph{\textbf{v}}_1^{(2)},...,\emph{\textbf{v}}_1^{(d)},\emph{\textbf{v}}_2^{(1)},...,\emph{\textbf{v}}_2^{(d)},...,\emph{\textbf{v}}_n^{(d)}),\] where $\emph{\textbf{v}}_w = {^t}\emph{\textbf{u}}_w$.
\end{lem}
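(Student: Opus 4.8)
The plan is to verify the identity entrywise by unwinding both the Kronecker structure of $G'$ and the coordinate ordering built into $\phi'$. First I would fix notation for the entries of $U$: writing $U=(u_{i,\ell})$ with $1\le i\le m$ and $1\le\ell\le n$, and expanding each entry in the chosen integral basis as $u_{i,\ell}=\sum_{j=1}^d u_{i,\ell,j}\gamma_j$ with $u_{i,\ell,j}\in\Q$. With this notation the definition of $\phi$ on each column $\mathbf u_\ell$ and its concatenation into $\phi'(U)$ says precisely that $\phi'(U)$ is the column vector whose entries are indexed by triples $(\ell,j,i)$, ordered with $i$ fastest, then $j$, then $\ell$ slowest, and whose $(\ell,j,i)$-entry is the rational number $u_{i,\ell,j}$.

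Next I would read off the action of $G'=I_n\otimes G\otimes I_m$ on a vector carrying this index structure. Because the three tensor factors act on the $\ell$-index, the $j$-index, and the $i$-index respectively, and because $I_n$ and $I_m$ act as the identity on the $\ell$- and $i$-indices, only the middle factor $G$ mixes coordinates: the $(\ell,s,i)$-entry of $G'\phi'(U)$ equals $\sum_{j=1}^d G_{s,j}\,u_{i,\ell,j}$. Here I use that $G=(\gamma_j^{(i)})$, so $G_{s,j}=\gamma_j^{(s)}=\psi_s(\gamma_j)$, and that each coordinate $u_{i,\ell,j}$ lies in $\Q$ and is therefore fixed by $\psi_s$. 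Hence
$$(G'\phi'(U))_{(\ell,s,i)}=\sum_{j=1}^d \psi_s(\gamma_j)\,u_{i,\ell,j}=\psi_s\Big(\sum_{j=1}^d u_{i,\ell,j}\gamma_j\Big)=\psi_s(u_{i,\ell})=u_{i,\ell}^{(s)}.$$

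Finally I would match this against the claimed right-hand side. Since $\mathbf v_\ell={}^t\mathbf u_\ell=(u_{1,\ell},\ldots,u_{m,\ell})$, its $s$-th conjugate is the row vector $\mathbf v_\ell^{(s)}=(u_{1,\ell}^{(s)},\ldots,u_{m,\ell}^{(s)})$, so listing these blocks in the order $(\ell,s)$ with $\ell$ slowest, $s$ next, and the component index $i$ fastest reproduces exactly the ordering of the triples $(\ell,s,i)$ computed above. Comparing entries then gives the asserted equality. I expect no real obstacle here beyond bookkeeping: the only point needing care is confirming that the left-to-right order of the Kronecker factors in $G'=I_n\otimes G\otimes I_m$ matches the slowest-to-fastest ordering $(\ell,j,i)$ that $\phi'$ imposes, which is precisely what makes the single matrix $G$ implement all $d$ conjugations simultaneously.
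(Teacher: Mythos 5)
Your proof is correct and takes essentially the same approach as the paper: the paper also verifies the identity by unwinding the Kronecker structure, first splitting $G'=I_n\otimes(G\otimes I_m)$ into diagonal blocks acting on each column's coordinate vector ${}^t\phi(\mathbf{u}_w)$, and then computing that $G$ applied to the $\gamma_j$-coordinates of each entry reproduces its conjugates. The only difference is bookkeeping granularity (you work entrywise with triple indices, the paper works blockwise), which does not change the argument.
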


\begin{proof}
Note that: \[G'\phi'(U) = \left(\begin{array}{c}(G\otimes I_m){^t}\phi(\textbf{u}_1)\\ (G\otimes I_m){^t}\phi(\textbf{u}_2)\\ \ldots \\ (G\otimes I_m){^t}\phi(\textbf{u}_n)\end{array}\right).\]

It suffices to show that $(G\otimes I_m){^t}\phi(\textbf{u}_w) = {^t}(\textbf{v}_w^{(1)}, ... ,\textbf{v}_w^{(d)})$ for each $1\leq w \leq n$. Letting $\phi(\textbf{u}_w) = (u_{1,1,w},u_{2,1,w},..., u_{m,1,w}, u_{1,2,w},...,u_{m,2,w},...,u_{m,d,w})$ the claim follows since: \begin{align*}(G\otimes I_m){^t}\phi(\textbf{u}_w) &= \left(\begin{array}{c}\sum_{j=1}^{d}\gamma_j^{(1)}\sum_{i=1}^m u_{i,j,w}\textbf{e}_i\\ \sum_{j=1}^{d}\gamma_j^{(2)}\sum_{i=1}^m u_{i,j,w}\textbf{e}_i\\ ... \\ \sum_{j=1}^{d}\gamma_j^{(d)}\sum_{i=1}^m u_{i,j,w}\textbf{e}_i\end{array}\right) = \left(\begin{array}{c}\sum_{i,j}u_{i,j,w}\textbf{e}_i\gamma_j^{(1)}\\ \sum_{i,j}u_{i,j,w}\textbf{e}_i\gamma_j^{(2)}\\ ... \\ \sum_{i,j}u_{i,j,w}\textbf{e}_i\gamma_j^{(d)}\end{array}\right)\\ &= \left(\begin{array}{c}\textbf{u}_w^{(1)}\\ \textbf{u}_w^{(2)}\\...\\ \textbf{u}_w^{(d)}\end{array}\right)
= {^t}(\textbf{v}_1^{(1)},...,\textbf{v}_w^{(d)}).\end{align*}
\end{proof}

We now wish to encode the conjugates $M^{(i)}$ and the $\tau^{(i)}$ into an $mnd\times mnd$ matrix. In order to do this we construct the block matrix $Z_0(\tau) = (Z_{i,j}(\tau))\in\C^{mnd,mnd}$, where for each $1\leq i,j\leq n$ we have $Z_{i,j}(\tau) = \text{diag}(\tau_{i,j}^{(1)}M^{(1)},...,\tau_{i,j}^{(d)}M^{(d)})\in \C^{md,md}$.

Letting $Z_1(\tau) = {^t}G'Z_0(\tau)G'$ (which is in $\h_{(mnd)}$) we can now prove the relation mentioned earlier.

\begin{lem}
For any $U\in V$ we have $M_{V,\Q}[U]\tau = {^t}\phi'(U) Z_1(\tau)\phi'(U)$.
\end{lem}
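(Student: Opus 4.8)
The plan is to reduce everything to Lemma 2.2 by exploiting the factorization $Z_1(\tau)={}^tG'Z_0(\tau)G'$. Expanding the quadratic form and absorbing the two copies of $G'$ into the vector gives
$${}^t\phi'(U)Z_1(\tau)\phi'(U)={}^t\phi'(U)\,{}^tG'Z_0(\tau)G'\phi'(U)={}^t\big(G'\phi'(U)\big)Z_0(\tau)\big(G'\phi'(U)\big),$$
so that Lemma 2.2 applies directly: $G'\phi'(U)$ is the column obtained by stacking the conjugate columns $\mathbf{u}_w^{(a)}$ in the order $(w,a)$, with $w$ ($1\le w\le n$) the outer (Siegel) index and $a$ ($1\le a\le d$) the inner (conjugate) index.

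First I would fix this indexing and match it against the block structure of $Z_0(\tau)$. By construction $Z_0(\tau)=(Z_{i,j}(\tau))_{1\le i,j\le n}$ with each $Z_{i,j}(\tau)=\diag(\tau_{i,j}^{(1)}M^{(1)},\ldots,\tau_{i,j}^{(d)}M^{(d)})$, so $Z_0(\tau)$ is indexed by pairs $\big((i,a),(j,b)\big)$ of $m\times m$ blocks, where the $(i,a),(j,b)$ block equals $\tau_{i,j}^{(a)}M^{(a)}$ when $a=b$ and is zero otherwise. The crucial point is that the conjugate index is block-diagonal, exactly matching the order in which Lemma 2.2 stacks the $\mathbf{u}_w^{(a)}$.

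Expanding the quadratic form against this block structure, the terms off-diagonal in the conjugate index vanish, leaving
$${}^t\big(G'\phi'(U)\big)Z_0(\tau)\big(G'\phi'(U)\big)=\sum_{a=1}^d\sum_{i,j=1}^n \tau_{i,j}^{(a)}\,{}^t\big(\mathbf{u}_i^{(a)}\big)M^{(a)}\mathbf{u}_j^{(a)}.$$
I would then recognize ${}^t(\mathbf{u}_i^{(a)})M^{(a)}\mathbf{u}_j^{(a)}$ as the $(i,j)$ entry of the Gram matrix $M^{(a)}[U^{(a)}]=(M[U])^{(a)}$, so that, by symmetry of $\tau_a$ and of $M[U]$, the inner double sum over $i,j$ is exactly $\sigma\big(\tau_a(M[U])^{(a)}\big)$. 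Summing over $a$ and pulling the matrix trace outside the field trace yields $\sigma\big(\Tr(M[U]\tau)\big)=M_{V,\Q}[U]\tau$, the desired identity.

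The only real obstacle is bookkeeping: one must keep the Siegel index (running over $1,\ldots,n$) and the conjugate index (running over $1,\ldots,d$) cleanly separated, and verify that the ordering convention fixed in Lemma 2.2 agrees with the block-diagonal-in-conjugate structure of $Z_0(\tau)$. Once these conventions are aligned the computation is a direct expansion, and it is precisely the symmetry of $\tau_a$ that lets the double sum over $(i,j)$ collapse into a matrix trace.
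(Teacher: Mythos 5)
Your proof is correct and takes essentially the same approach as the paper's: both expand ${}^t\phi'(U)Z_1(\tau)\phi'(U) = {}^t\bigl(G'\phi'(U)\bigr)Z_0(\tau)\bigl(G'\phi'(U)\bigr)$ via the preceding stacking lemma, exploit that $Z_0(\tau)$ is block-diagonal in the conjugate index, and collapse the resulting double sum over the Siegel indices into $\sigma\bigl(M[U]^{(w)}\tau^{(w)}\bigr)$ using symmetry (the paper phrases this via the identity $\sigma({}^tAB)=\sum_{i,j}A_{i,j}B_{i,j}$ together with the symmetry of $M[U]^{(w)}$). The differences are purely notational bookkeeping.
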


\begin{proof}
It is clear by the lemma that \begin{align*}{^t}\phi'(U) Z_1(\tau) \phi'(U) &= {^t}(G'\phi'(U)) Z_0(\tau) (G'\phi'(U))\\ &= \sum_{i,j}(\textbf{v}_i^{(1)}, \textbf{v}_i^{(2)}, ... , \textbf{v}_i^{(d)}) Z_{i,j}(\tau) {^T}(\textbf{v}_j^{(1)}, \textbf{v}_j^{(2)}, ... , \textbf{v}_j^{(d)})\\ &= \sum_{i,j} (\textbf{v}_i^{(1)}, \textbf{v}_i^{(2)},...,\textbf{v}_i^{(d)})\left(\begin{array}{c}\tau_{i,j}^{(1)}M^{(1)}\textbf{u}_j^{(1)}\\ \tau_{i,j}^{(2)}M^{(2)}\textbf{u}_j^{(2)}\\ ... \\ \tau_{i,j}^{(d)}M^{(d)}\textbf{u}_j^{(d)}\end{array}\right)\\ &= \sum_{w=1}^d\left(\sum_{i,j}\textbf{v}_i^{(w)}M^{(w)}\textbf{u}_j^{(w)}\tau_{i,j}^{(w)}\right)\\ &=\sum_{w=1}^d\left(\sum_{i,j}M[U]_{i,j}^{(w)}\tau_{i,j}^{(w)}\right)\\ &= \sum_{w=1}^d(\sigma(M[U]^{(w)}\tau^{(w)}))\\ &= \sigma\left(\sum_{w=1}^d M[U]^{(w)}\tau^{(w)}\right)\\ &= M_V(U\tau). \end{align*} In the third to last equality we use the identity $\sigma({^t}AB) = \sum_{i,j}A_{i,j}B_{i,j}$ with the symmetry of $M[U]^{(w)}$.
\end{proof}

One also proves in a similar fashion that $B_{V,\Q}(U,W) = {^t}\phi'(U)Z_1(I)\phi'(W)$, a fact we will need later. 

Let $A\in \text{GL}_{mnd}(\Q)$ be the matrix whose $i$th column is $\phi'(U_i)$. If $U = \sum_{r=1}^{mnd}u_r U_r\in\calL'$ then it is clear that $\phi'(U) = A\textbf{u}$, where $\textbf{u} = {^t}(u_1,...,u_{mnd})\in \Z^{mnd}$. Thus by the lemma we see that $M_{\calL',\Q}[U]\tau = {^t}\textbf{u} Z(\tau) \textbf{u}$, where $Z(\tau) = {^t}A Z_1(\tau) A$. In particular the map $\phi'':V \rightarrow \Q^{mnd,1}$ given by $\phi''(U) = A^{-1}\phi'(U)$ gives an isometry between $M_{\calL',\Q}$ and the quadratic form on $\Q^{mnd,1}$ with Gram matrix $\Phi(\calL') = Z(I)$.

The following properties of $Z(\tau)$ will be useful.

\begin{lem}
\begin{itemize}
\item{$-Z(\tau)^{-1} = \Phi(\calL')^{-1}Z(\tau^{-1})\Phi(\calL')^{-1}$.}
\item{$\mathrm{det}(-iZ(\tau))^{-\frac{1}{2}} = \frac{1}{\sqrt{\mathrm{det}(Z(I))}}\mathrm{det}(N(-i\tau))^{-\frac{m}{2}}$, where $N(\tau) = \prod_{w=1}^d\tau^{(w)}$.}
\end{itemize}
\end{lem}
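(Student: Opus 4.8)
The plan is to reduce both identities to statements about $Z_0(\tau)$ alone and then exploit the Kronecker-product structure of that matrix. Writing $C = G'A$, which is invertible since $A\in GL_{mnd}(\Q)$ and $\det G\neq 0$ (the conjugate matrix of an integral basis is nonsingular), the defining relations give $Z(\tau) = {}^tC\, Z_0(\tau)\, C$, and in particular $\Phi(\calL') = Z(I) = {}^tC\, Z_0(I)\, C$. The crucial structural observation is that, after the fixed permutation of coordinates grouping the $\Z$-basis by conjugate index $w$, the matrix $Z_0(\tau)$ is block diagonal with $w$-th diagonal block the Kronecker product $\tau_w\otimes M^{(w)}$ (an $mn\times mn$ matrix), where $\tau_w=\tau^{(w)}$. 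Since each $M^{(w)}$ is positive definite and each $\tau_w$ has positive definite imaginary part, every block has positive definite imaginary part, so $Z_0(\tau)$, and hence $Z_1(\tau)$ and $Z(\tau)$, lie in $\h_{(mnd)}$; this legitimizes the inverses and square roots below.

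For the first identity I would substitute $Z(\tau) = {}^tC\, Z_0(\tau)\, C$ throughout. Because $C$ is real and invertible, the congruences by $C^{-1}$ and ${}^tC^{-1}$ cancel in the product $\Phi(\calL')^{-1}Z(\tau^{-1})\Phi(\calL')^{-1}$, reducing the claim to the purely local identity $-Z_0(\tau)^{-1} = Z_0(I)^{-1}Z_0(-\tau^{-1})Z_0(I)^{-1}$, the argument $-\tau^{-1}$ being the inversion point that supplies the overall sign. I would then verify this block by block via the mixed-product rule for Kronecker products: for each $w$,
$$(I_n\otimes M^{(w)})^{-1}\big((-\tau_w^{-1})\otimes M^{(w)}\big)(I_n\otimes M^{(w)})^{-1} = (-\tau_w^{-1})\otimes (M^{(w)})^{-1} = -(\tau_w\otimes M^{(w)})^{-1},$$
which is exactly the $w$-th block of $-Z_0(\tau)^{-1}$.

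For the second identity I would pass to determinants. Multiplicativity gives $\det Z(\tau) = (\det C)^2\det Z_0(\tau)$, and the Kronecker determinant formula $\det(\tau_w\otimes M^{(w)}) = (\det\tau_w)^m(\det M^{(w)})^n$ yields $\det Z_0(\tau) = \prod_{w=1}^d(\det\tau_w)^m(\det M^{(w)})^n$. Comparison with $\tau=I$ gives the clean relation $\det Z(\tau) = \det Z(I)\cdot\prod_w(\det\tau_w)^m$, so that $\det(-iZ(\tau)) = (-i)^{mnd}\det Z(I)\prod_w(\det\tau_w)^m$, while $\det N(-i\tau) = (-i)^{nd}\prod_w\det\tau_w$. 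A direct comparison then shows the squares of the two sides of the claimed identity agree.

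The step needing the most care is the branch of the square root: the claim is an equality of specific holomorphic square roots, not merely of their squares. I would resolve this by noting that both sides are nonvanishing and holomorphic on the connected domain $\h_{(n)}^d$ (each of $\det(-iZ(\tau))$ and $\det N(-i\tau)$ lies in the right half-plane, where the principal square root is holomorphic), so the ratio of the two sides is a continuous $\{\pm1\}$-valued function and hence constant. Evaluating at the base point $\tau=(iI_n,\ldots,iI_n)$, where $Z_0(iI)=iZ_0(I)$ forces $-iZ(iI)=Z(I)$ to be real positive definite and $N(-i\tau)=I_n$, both sides reduce to $(\det Z(I))^{-1/2}>0$, pinning the constant to $+1$. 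This base-point normalization, together with the Kronecker determinant bookkeeping, is the only genuinely delicate point; the algebraic identities themselves are immediate from the block structure.
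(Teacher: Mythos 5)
The paper states this lemma with no proof at all, so there is nothing of the authors' to compare against; your route, via $Z(\tau)={}^t(G'A)\,Z_0(\tau)\,(G'A)$ together with the observation that $Z_0(\tau)$ is permutation-conjugate to the block-diagonal matrix with blocks $\tau_w\otimes M^{(w)}$, is exactly the structure the paper's definitions set up, and your Kronecker mixed-product and determinant computations are correct. One caveat on the first bullet: since $Z_0$, hence $Z$, is $\C$-linear in $\tau$, what your block computation establishes is $Z(\tau)^{-1}=\Phi(\calL')^{-1}Z(\tau^{-1})\Phi(\calL')^{-1}$ when $\tau^{-1}$ denotes the componentwise inverse $(\tau_1^{-1},\ldots,\tau_d^{-1})$, equivalently $-Z(\tau)^{-1}=\Phi(\calL')^{-1}Z(-\tau^{-1})\Phi(\calL')^{-1}$; you proved the latter after silently reading the lemma's ``$\tau^{-1}$'' as the inversion point $-\tau^{-1}$. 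Make that reinterpretation explicit: read literally, the printed statement is off by a sign, and yours is the version the paper actually needs. Indeed, in the proof of Theorem 2.4 it is $-Z(\tau)^{-1}$, produced by Poisson summation, that must be rewritten, and only your sign leads to the (convergent) final formula $\e\{2\,^tWMY-M[Y]\tau^{-1}\}$; the paper's own computation reaches that formula only because a compensating sign slip occurs at the step where $\mathbf{w}$ is replaced by $-\mathbf{w}$.

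The one genuine flaw is in your branch argument for the second bullet. The claim that $\det(-iZ(\tau))$ and $\det N(-i\tau)$ lie in the right half-plane fails once $n\ge 2$: already for $\tau_w=(t+i)I_2$ one has $\det(-i\tau_w)=(1-it)^2=1-t^2-2it$, whose real part is negative for $t>1$ (and for $n=1$, $d\ge 2$ the product over $w$ can likewise leave the right half-plane). So the principal square root is not a holomorphic, or even continuous, choice of branch along paths in the domain. The repair is standard and costs nothing: $\h_{(n)}^d$ is convex, hence simply connected, and both determinants are holomorphic and nonvanishing there, so each admits a global holomorphic square root, unique up to sign. Your determinant bookkeeping shows the squares of the two sides agree, so the ratio of the two sides is a continuous $\{\pm1\}$-valued function on a connected set, hence constant, and your evaluation at the base point $\tau=(iI_n,\ldots,iI_n)$, where both sides equal the positive real number $\det(Z(I))^{-1/2}$, pins the constant to $+1$. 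With simple connectivity substituted for the half-plane claim, your proof is complete.
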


Notice that the compliment of $\calL'$ is defined to be  the largest $\Ok$-lattice satisfying $\Tr(B_{V,\kfld}(U,W))\in\Z$ for all $U\in \calL'$ and hence is the dual of $\calL'$ with respect to $M_{V,\Q}$. Thus the Gram matrix of $\widetilde{\calL'}$ with respect to the above basis is $\Phi(\calL')^{-1} = (B_{V,\Q}(U_i, U_j))_{i,j})^{-1}$. 

Given $W\in V$ we define the shifted theta series: \[\theta(\calL',W;\tau) = \sum_{U\in\calL'}e\{Q[U+W]\tau\}.\] We are now able to prove the following inversion formula for $\theta(\calL',W;\tau)$.

\begin{thm}(Inversion formula)
For $m,n\in\Z_+$,
$V=\kfld^{m,n}$, $\calL'$ a lattice on $V$ with quadratic form given by the totally positive matrix
$M\in\kfld^{n,n}_{\sym}$ and
$W\in V$, we have \[\theta(\calL',W;\tau) = \frac{1}{\sqrt{\mathrm{det}(\Phi(\calL'))}}\mathrm{det}(N(-i\tau))^{-\frac{m}{2}}\sum_{Y\in\widetilde{\calL'}}\e\{2\,^tWMY - M[Y]\tau^{-1}\}.\]
\end{thm}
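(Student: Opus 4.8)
The plan is to collapse the Hilbert--Siegel theta series into a single classical theta series over $\Q$ in $N=mnd$ variables, invert that one by ordinary Poisson summation, and then read the result back on $\widetilde{\calL'}$. First I would use the isometry $\phi''(U)=A^{-1}\phi'(U)$ constructed above: combining $Z(\tau)={}^{t}AZ_1(\tau)A$ with Lemma 2.2 gives $M_{V,\Q}[U]\tau={}^{t}\phi''(U)Z(\tau)\phi''(U)$ for every $U\in V$. Setting $\mathbf{x}=\phi''(W)\in\Q^N$ and recalling $\phi''(\calL')=\Z^N$ and the linearity of $\phi''$, the shift $U+W$ has $\phi''(U+W)=\mathbf{u}+\mathbf{x}$ with $\mathbf{u}=\phi''(U)$ ranging over $\Z^N$, so
\[
\theta(\calL',W;\tau)=\sum_{\mathbf{u}\in\Z^N}\exp\!\bigl(\pi i\,{}^{t}(\mathbf{u}+\mathbf{x})Z(\tau)(\mathbf{u}+\mathbf{x})\bigr).
\]
Since $Z_1(\tau)\in\h_{(N)}$ and $A\in\mathrm{GL}_N(\Q)$, the matrix $Z(\tau)$ again lies in $\h_{(N)}$, so its imaginary part is positive definite and the sum converges; analyticity is already available from Theorem 1.1.4 of \cite{And}.

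Next I would invert by Poisson summation applied to the shifted Gaussian $\mathbf{t}\mapsto\exp(\pi i\,{}^{t}(\mathbf{t}+\mathbf{x})Z(\tau)(\mathbf{t}+\mathbf{x}))$, whose Fourier transform is the standard Gaussian integral $\det(-iZ(\tau))^{-1/2}e^{2\pi i\,{}^{t}\mathbf{y}\mathbf{x}}e^{-\pi i\,{}^{t}\mathbf{y}Z(\tau)^{-1}\mathbf{y}}$. This yields
\[
\theta(\calL',W;\tau)=\det(-iZ(\tau))^{-\frac12}\sum_{\mathbf{y}\in\Z^N}e^{2\pi i\,{}^{t}\mathbf{y}\mathbf{x}}\,e^{-\pi i\,{}^{t}\mathbf{y}Z(\tau)^{-1}\mathbf{y}}.
\]
The second part of Lemma 2.3, together with $\Phi(\calL')=Z(I)$, rewrites the prefactor at once as $\frac{1}{\sqrt{\det\Phi(\calL')}}\det(N(-i\tau))^{-m/2}$.

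The remaining step is to convert the sum over $\mathbf{y}\in\Z^N$ into the sum over $Y\in\widetilde{\calL'}$. Because $\phi''$ is an isometry carrying $B_{V,\Q}$ to the form with Gram matrix $\Phi(\calL')$ and carrying $\calL'$ onto $\Z^N$, the dual $\widetilde{\calL'}$ is carried onto $\{\mathbf{z}\in\Q^N:{}^{t}\mathbf{z}\,\Phi(\calL')\,\mathbf{n}\in\Z\ \forall\,\mathbf{n}\in\Z^N\}=\Phi(\calL')^{-1}\Z^N$; hence $\mathbf{y}\mapsto Y$ with $\phi''(Y)=\Phi(\calL')^{-1}\mathbf{y}$ is a bijection $\Z^N\to\widetilde{\calL'}$. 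Under this bijection the first part of Lemma 2.3 expresses ${}^{t}\mathbf{y}Z(\tau)^{-1}\mathbf{y}$ through $Z(\tau^{-1})$ conjugated by $\Phi(\calL')^{-1}$, i.e. through $M_{V,\Q}[Y]\tau^{-1}=\sigma(\Tr(M[Y]\tau^{-1}))$, which with the Poisson sign produces the Gaussian factor $\e\{-M[Y]\tau^{-1}\}$, exactly as in the classical scalar theta inversion; and the linear factor becomes $\e\{2\,{}^{t}WMY\}$ since ${}^{t}\mathbf{y}\mathbf{x}={}^{t}\phi''(Y)\Phi(\calL')\phi''(W)=B_{V,\Q}(W,Y)=\sigma(\Tr({}^{t}WMY))$. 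Assembling the three factors gives the stated formula.

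I expect the genuine obstacle to be this final translation rather than the inversion itself, which is classical: one must pin down the dual-lattice correspondence $\phi''(\widetilde{\calL'})=\Phi(\calL')^{-1}\Z^N$ and then match both the quadratic and the linear terms with the correct signs and normalizations, leaning on the precise form of the two identities in Lemma 2.3 and on the linearity of $Z(\cdot)$ in $\tau$. The analytic input --- convergence of the sums and the Gaussian Fourier transform that powers Poisson summation --- is routine once $Z(\tau)\in\h_{(N)}$ is in hand.
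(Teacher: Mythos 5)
Your proposal is correct and follows essentially the same route as the paper: identify $\theta(\calL',W;\tau)$ with a shifted classical theta series in $mnd$ variables via $\phi''$ and $Z(\tau)$, apply Poisson summation (the known inversion formula for generalized theta series), and translate back using Lemma 2.3 together with the identification $\phi''(\widetilde{\calL'})=\Phi(\calL')^{-1}\Z^{mnd}$. The only (inessential) difference is that the paper first proves the formula under the assumptions that $M$ is even integral and $W\in\widetilde{\calL'}$ and then deduces the general case by a scaling argument (choosing $c$ totally positive with $cM$ even integral and $cW\in\widetilde{\calL'}$, and evaluating at $\tau/c$), whereas you run the Poisson argument directly in full generality, which is legitimate since the shifted-Gaussian Fourier transform requires no such integrality hypotheses.
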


\begin{proof}
First suppose that $M$ is even integral and $W\in\widetilde{\calL'}$.
Let $\phi''(W) = \textbf{w}_0$. Then using the above lemma and discussion: 
\begin{align*}\theta(\calL',W;\tau) 
&= \sum_{U\in\calL'}e^{\pi i M_{V,\Q}[U+W]\tau}\\
 &= \sum_{\textbf{u}\in\Z^{mnd}}e^{\pi i ({^t}(\textbf{u}+\textbf{w}_0)Z(\tau)(\textbf{u}+\textbf{w}_0))}.
\end{align*} 
Since $\textbf{w}_0\in\Q^{mnd,1}$ is fixed and $Z(\tau)\in\h_{(mnd)}$, the right hand side is a generalised theta series with variable $Z(\tau)$. The inversion formula for such forms is known and applying this gives: 

\begin{align*}\theta(\calL',W;\tau) 
&= \text{det}(-iZ(\tau))^{-\frac{1}{2}}\sum_{\textbf{u}\in\Z^{mnd}} e^{\pi i(-{^t}\textbf{u} Z(\tau)^{-1}\textbf{u} - 2^{t}\textbf{u}\textbf{w}_0)} \\ 
&= \text{det}(-iZ(\tau))^{-\frac{1}{2}}\sum_{\textbf{u}\in\Z^{mnd}} e^{\pi i({^t}(Z(I)^{-1}\textbf{u}) Z(\tau^{-1})(Z(I)^{-1}\textbf{u}) - 2{^t}\textbf{u}\textbf{w}_0)}\\ 
&= \text{det}(-iZ(\tau))^{-\frac{1}{2}}\sum_{\textbf{w}\in Z(I)^{-1}\Z^{mnd}}e^{\pi i({^t}\textbf{w} Z(\tau^{-1})\textbf{w} - 2{^t}\textbf{w}Z(I)\textbf{w}_0)}\\ 
&= \text{det}(Z(-i\tau))^{-\frac{1}{2}}\sum_{\textbf{w}\in Z(I)^{-1}\Z^{mnd}}e^{\pi i(2{^t}\textbf{w}_0 Z(I)\textbf{w} - {^t}\textbf{w} Z(\tau^{-1})\textbf{w})} \\ 
&= \frac{1}{\sqrt{\text{det}(\Phi(\calL'))}}\text{det}(N(-i\tau))^{-\frac{m}{2}}\sum_{Y\in \widetilde{\calL'}}e^{\pi i(2B_{V,\Q}(W,Y) - M_{V,\Q}[Y]\tau^{-1})}\\ 
&= \frac{1}{\sqrt{\mathrm{det}(\Phi(\calL'))}}\mathrm{det}(N(-i\tau))^{-\frac{m}{2}}\sum_{Y\in\widetilde{\calL'}}\e\{2M[W,Y] - M[Y]\tau^{-1}\}. 
\end{align*}

Now suppose $M$ is not necessarily even integral and $W$ is not necessarily in $\widetilde{\calL'}$.
Take $c\in\Ok$ to be totally positive so that $cM$ is even integral and $cW\in\widetilde{\calL'}$.
Let $\calL''$ denote the lattice $\calL'$ equipped with the scaled quadratic form $cM$.  Then $\widetilde{\calL''}=c^{-1}\widetilde{\calL'}$, and hence from above we have
\begin{align*}
&\theta(\calL',W;\tau)\\
&\quad=\theta(\calL'',W;\tau/c)\\
&\quad=\frac{1}{\sqrt{\text{det}(\Phi(\calL''))}}\text{det}(N(-i\tau/c))^{-\frac{m}{2}}
\sum_{Y\in\widetilde{\calL'}}\e\{2cM[W,c^{-1}Y]-cM[c^{-1}Y]c\tau^{-1}\}\\
&\quad=\frac{1}{\sqrt{\text{det}(\Phi(\calL'))}}\text{det}(N(-i\tau))^{-\frac{m}{2}}
\sum_{Y\in\widetilde{\calL'}}\e\{2M[W,Y]-M[Y]\tau^{-1}\}.
\end{align*}

\end{proof} 

\end{subsection}

\begin{subsection}{The transformation formula.}
From hereon we assume $m$ is even with $m=2k$, and we focus on the lattice
$$\calL'=\calL\big<\I_1,\ldots,\I_n\big>=\big<\calA_1,\ldots,\calA_m\big>
\Ok^{m,n}\big<\I_1,\ldots,\I_n\big>$$
where $\I_1,\ldots,\I_n$ are (nonzero) fractional ideals fixed throughout this section,
and $\calL'$ is equipped with the quadratic form $Q$ as fixed in Section 1.
Using the inversion formula, we prove the transformation formula and thereby show that $\theta(\calL';\tau)\in\M_k(\Gamma_0(\stufe;\I_1,\ldots,\I_n;\n),\chi_L).$
For this, we note that 
$$\widetilde{\calL'}=\widetilde\calL\big<\I_1^{-1},\ldots,\I_n^{-1}\big>
=\partial^{-1}Q^{-1}\big<\calA_1^{-1},\ldots,\calA_m^{-1}\big>\Ok^{m,n}
\big<\I_1^{-1},\ldots,\I_n^{-1}\big>.$$


\begin{prop}  Let $\calL'$ be as above, and
suppose
$$\gamma=\begin{pmatrix}A&B\\C&D\end{pmatrix}\in\Gamma(\stufe ;\I_1,\ldots,\I_n;\n)$$
with $\det D\not=0$.
Then
\begin{align*}
&\theta(\calL';(A\tau+B)(C\tau+D)^{-1})\\
&\quad=
\det(N(-i\tau(C\tau+D)^{-1}D))^{-k} \det(N(-i\tau))^{k}\\
&\qquad \cdot
\left(\sum_{U\in\calL'/\calL'\,^tD} \e\{Q[U]BD^{-1}\}\right)\,\theta(\calL';\tau).
\end{align*}
\end{prop}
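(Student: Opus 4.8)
The plan is to combine the inversion formula (Theorem 2.4) with the decomposition of $\gamma$ made available by the hypothesis $\det D\neq0$. Since $\gamma\in Sp_n(\kfld)$, the blocks satisfy $^tDB={}^tBD$ and $C\,^tD=D\,^tC$, so $BD^{-1}$ and $D^{-1}C$ are symmetric and $^tD^{-1}+BD^{-1}C=A$; hence
$$\gamma=\begin{pmatrix}I&BD^{-1}\\0&I\end{pmatrix}\begin{pmatrix}^tD^{-1}&0\\0&D\end{pmatrix}\begin{pmatrix}I&0\\D^{-1}C&I\end{pmatrix},\qquad (A\tau+B)(C\tau+D)^{-1}=BD^{-1}+{}^tD^{-1}\tau(C\tau+D)^{-1}.$$
First I would record how each factor acts on $\theta(\calL';\tau)$: the translation $\tau\mapsto\tau+BD^{-1}$ multiplies each term by $\e\{Q[U]BD^{-1}\}$; the scaling $\tau\mapsto{}^tD^{-1}\tau D^{-1}$ is handled by the reindexing $U\mapsto U\,^tD^{-1}$, since $\sigma(Q[U]\,^tD^{-1}\rho D^{-1})=\sigma(Q[U\,^tD^{-1}]\rho)$ and this merely replaces $\calL'$ by $\calL'\,^tD^{-1}$; and the lower-unipotent factor $\tau\mapsto\tau(D^{-1}C\tau+I)^{-1}$ is the only genuinely new transformation.

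The heart of the argument is this last factor, which I would treat through Theorem 2.4. Writing $J=\begin{pmatrix}0&-I\\I&0\end{pmatrix}$, one has $\begin{pmatrix}I&0\\D^{-1}C&I\end{pmatrix}=J^{-1}\begin{pmatrix}I&-D^{-1}C\\0&I\end{pmatrix}J$, so the unipotent action is a translation conjugated by the inversion $\tau\mapsto-\tau^{-1}$, which is exactly the content of the inversion formula. Each application of the shifted inversion formula (with $m=2k$, so the exponent $-m/2$ becomes $-k$, matching the weight) converts a theta sum over $\calL'$ into a sum over $\widetilde{\calL'}$ at $-\tau^{-1}$ and introduces an automorphy factor $\det(N(-i\cdot))^{-k}$; after the intervening translation and a second inversion, using $\widetilde{\widetilde{\calL'}}=\calL'$, one is returned to a theta series for $\calL'$. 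This is where all the determinant factors originate, and combining them across the three factors of $\gamma$ is what produces $\det(N(-i\tau(C\tau+D)^{-1}D))^{-k}\det(N(-i\tau))^{k}$.

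To extract the finite character sum I would split $\calL'=\bigsqcup_W(W+\calL'\,^tD)$ over $W\in\calL'/\calL'\,^tD$ (noting $\calL'\,^tD\subseteq\calL'$ for $\gamma\in\Gamma_0(\stufe;\I_1,\ldots,\I_n;\n)$); this coset splitting is also what reconciles the lattice $\calL'\,^tD^{-1}$ and the complement $\widetilde{\calL'}$ appearing above back to $\calL'$. The translation phase on a coset is $\e\{Q[W+V\,^tD]BD^{-1}\}$ with $V\in\calL'$, and using the symplectic relation $^tDBD^{-1}={}^tB$ one computes $Q[V\,^tD]BD^{-1}=DQ[V]\,^tB$ together with the cross term $2\,{}^tWQV\,^tB$; the even $\n$-integrality of $L$ and the congruence conditions defining $\Gamma_0(\stufe;\I_1,\ldots,\I_n;\n)$ force both to contribute trivially to $\e\{\cdot\}$. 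Hence the phase depends only on $W$ modulo $\calL'\,^tD$, the factor $\sum_{W\in\calL'/\calL'\,^tD}\e\{Q[W]BD^{-1}\}$ pulls out of the sum, and the remainder reassembles to $\theta(\calL';\tau)$, giving the claimed identity.

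I expect the main obstacle to be the bookkeeping of the automorphy factors across the two inversions: each application of Theorem 2.4 carries a $\det(N(-i\cdot))^{-k}$ with an implicit choice of branch, and one must verify that the half-integer ambiguities cancel (they do, precisely because $m=2k$ is even) and that the product collapses to the single factor $\det(N(-i\tau(C\tau+D)^{-1}D))^{-k}\det(N(-i\tau))^{k}$. A secondary technical point is checking that the shifts transported through the inversion formula align exactly with the coset representatives of $\calL'/\calL'\,^tD$, and that $\widetilde{\calL'\,^tD}$ and $\widetilde{\calL'}$ match up via the duality statement recorded after Theorem 2.4, so that no spurious lattice survives the composition.
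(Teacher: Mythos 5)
Your skeleton is the same as the paper's: use the identity $(A\tau+B)(C\tau+D)^{-1}={}^tD^{-1}\,{}^tB+{}^tD^{-1}\tau(C\tau+D)^{-1}$, split $\theta(\calL';\gamma\tau)$ over the cosets $\calL'/\calL'\,^tD$ into shifted theta series, and invoke Theorem 2.4 twice; your verification that the phase $\e\{Q[W+V\,^tD]BD^{-1}\}$ depends only on $W$ modulo $\calL'\,^tD$ is also the paper's first step. The genuine gap is the step you label ``a secondary technical point.'' Applying Theorem 2.4 to the shifted series $\theta(\calL',U\,^tD^{-1};\tau(C\tau+D)^{-1}D)$ produces, for each coset representative $U$, the dual sum $\sum_{Y\in\widetilde{\calL'}}\e\{2\,^tYQU\,^tD^{-1}-Q[Y](D^{-1}C+\tau^{-1})\}$. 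The linear term $\e\{2\,^tYQU\,^tD^{-1}\}$ couples $U$ to $Y$, and the term $\e\{-Q[Y]D^{-1}C\}$ (your conjugated translation) is a nontrivial phase on $\widetilde{\calL'}$; consequently the double sum over $(U,Y)$ does not factor as (character sum) $\times$ (theta series) on its face, and your assertion that ``the factor $\sum_{W}\e\{Q[W]BD^{-1}\}$ pulls out of the sum, and the remainder reassembles to $\theta(\calL';\tau)$'' is precisely what needs proof.

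The paper closes this gap in two moves. First it completes the square using the symplectic relations: $\e\{Q[U]BD^{-1}+2\,^tYQU\,^tD^{-1}-Q[Y]D^{-1}C\}=\e\{-Q[UB+Y]D^{-1}C\}$. Second --- and this is the heart of the proof --- it shows that for fixed $Y\in\widetilde{\calL'}$ the map $U\mapsto UB+Y$ induces a bijection of $\calL'/\calL'\,^tD$ onto $\widetilde{\calL'}/\widetilde{\calL'}D$; equivalently, for $U\in\calL'$ one has $UB\in\widetilde{\calL'}D$ if and only if $U\in\calL'\,^tD$. This is not formal: it is proved by localizing at each prime $\calP\mid\det D$ (where $\calP\nmid\stufe$ guarantees $Q$ is locally unimodular), reducing $^tD'$ to a normal form, and constructing an explicit invertible matrix $X$ satisfying $^tX(^tGD'\,^tE)=(E\,^tD'G)X$. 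Only after this lemma is the sum over $U$ seen to be independent of $Y$, so that it equals its value at $Y=0$, namely $\sum_{U}\e\{Q[U]BD^{-1}\}$; the leftover sum $\sum_{Y\in\widetilde{\calL'}}\e\{-Q[Y]\tau^{-1}\}$ is then handled by the second application of Theorem 2.4, with the $\det(N(-i\cdot))^{-k}$ factors combining as you describe (that bookkeeping, which you flag as the main obstacle, is in fact the routine part). Without the bijection lemma your argument does not go through.
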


\begin{proof}  
As one can check, we have
$$(A\tau+B)(C\tau+D)^{-1}
=\,^tD^{-1}\,^tB+\,^tD^{-1}\tau(C\tau+D)^{-1}.$$
For $U,Y\in\calL'$, using that $^tD^{-1}B=\,^tBD^{-1}$, $\sigma(MN)=\sigma(NM)$
and $\sigma(M)=\sigma(\,^tM)$ for $M,N\in\kfld^{n,n}$,
we find that
$$\sigma(Q[U+Y\,^tD]\,^tD^{-1}B)
\in\sigma(Q[U]\,^tD^{-1}B+2\partial^{-1}),$$
and so $\e\{Q[U+\,^tDY]\,^tD^{-1}\,^tB\}=\e\{Q[U]\,^tD^{-1}\,^tB\}$.
Also, since 
$$D\in\big<\I_1,\ldots,\I_n\big>\Ok^{n,n}\big<\I_1^{-1},\ldots,\I_n^{-1}\big>,$$
we know that $\det D\in\Ok$ and that $\calL'\,^tD\subseteq\calL'.$
Hence using the Inversion Formula and that $^tBD$ is symmetric, we have
\begin{align*}
&\theta(\calL';(A\tau+B)(C\tau+D)^{-1})\\
&\quad =
\sum_{U\in\calL'/\calL'\,^tD} \e\{Q[U]\,^tD^{-1}\,^tB\}
\theta(\calL',U\,^tD^{-1};\tau(C\tau+D)^{-1}D)\\
&\quad=
\frac{1}{\sqrt{\Phi(\calL')}}\ \det(N(-i\tau(C\tau+D)^{-1}D))^{-k}\\
&\qquad\cdot
\sum_{U\in\calL'/\calL'\,^tD} \e\{Q[U]BD^{-1}\}
\sum_{Y\in\widetilde{\calL'}} 
\e\{2\,^tYQU\,^tD^{-1}-Q[Y](D^{-1}C+\tau^{-1})\}\\
&\quad=
\frac{1}{\sqrt{\Phi(\calL)}}\ \det(N(-i\tau(C\tau+D)^{-1}D))^{-k}
\sum_{Y\in\widetilde{\calL'}}\e\{-Q[Y]\tau^{-1}\}\\
&\qquad\cdot
\left(\sum_{U\in\calL'/\calL'\,^tD} \e\{Q[U]BD^{-1}
+2\,^tYQU\,^tD^{-1} - Q[Y]D^{-1}C\}\right).
\end{align*}
We claim that this last sum on $U$ is independent of $Y$.  First,
using that $D\,^tA-C\,^tB=I=\,^tDA-\,^tBC$) we have that
$$\e\{-Q[UB+Y]D^{-1}C\}=\e\{Q[U]BD^{-1}+2\,^tYQU\,^tD^{-1}-Q[Y]D^{-1}C\}.$$
We know that $\big<\calA_1,\ldots,\calA_m\big>Q\big<\calA_1,\ldots,\calA_m\big>\subseteq
\n\Ok^{m,m},$ so for $U\in\calL'$, we have $UB\in\widetilde{\calL'};$
we also have ${\calL'}\,^tD\subseteq{\calL'}$.
For fixed $Y\in\widetilde{\calL'}$,
we show that $UB+Y$ varies over 
$\widetilde\calL'/\widetilde\calL'D$ as $U$ varies over $\calL'/\calL'\,^tD$;
to do this, we show that 
 for $U\in\calL'$, we have $UB\in\widetilde\calL'D$ if and only if $U\in\calL'\,^tD$.

We will argue locally.  We first observe that for $\calP$ a prime with $\calP\nmid\det D$, we have
$$\Ok_{\calP}\calL'\,^tD=\Ok_{\calP}\calL',\ 
\Ok_{\calP}\widetilde{\calL'}D=\Ok_{\calP}\widetilde{\calL'}.$$
Thus we need to show that for any prime $\calP|\det D$, we have $U\in\Ok_{\calP}\calL'\,^tD$ if and only if $UB\in\widetilde{\calL'}D$.  We first fix some notation.

Choose $\beta, \eta, \alpha_i, \mu_{\ell}\in\kfld$ so that for every prime $\calP|\det D$, we have
$$\beta\Ok_{\calP}=\partial\Ok_{\calP},\ \eta\Ok_{\calP}=\n\Ok_{\calP},\ 
\alpha_i\Ok_{\calP}=\calA_i\Ok_{\calP},\ \mu_{\ell}\Ok_{\calP}=\I_{\ell}\Ok_{\calP}$$ 
$(1\le i\le m,\ 1\le\ell\le n).$
Set $\underline\alpha=(\alpha_1,\ldots,\alpha_m),$
$\underline\mu=(\mu_1,\ldots,\mu_n),$
$$\begin{pmatrix}A'&B'\\C'&D'\end{pmatrix}=
\begin{pmatrix}\beta\eta\underline\mu\\&\underline\mu^{-1}\end{pmatrix}
\begin{pmatrix}A&B\\C&D\end{pmatrix}
\begin{pmatrix}\beta^{-1}\eta^{-1}\underline\alpha^{-1}\\
&\underline\mu^{-1}\end{pmatrix},$$
$U'=\underline\alpha^{-1}U\underline\mu^{-1}$, 
$Q'=\eta^{-1}\underline\alpha Q\underline\alpha.$

Now fix a prime $\calP|\det D$.  We have 
$\begin{pmatrix}A'&B'\\C'&D'\end{pmatrix}\in Sp_n(\Ok_{\calP})$ with $\stufe|C'$,
$U'\in\Ok_{\calP}^{m,n}$, and $Q'\in\Ok_{\calP}^{m,m}.$
Let $\calL'_{\calP}=\Ok_{\calP}\calL'$, 
$\widetilde{\calL'_{\calP}}=\Ok_{\calP}\widetilde{\calL'}$;
so $\calL'_{\calP}=\underline\alpha\Ok_{\calP}^{m,n}\underline\mu$,
$\widetilde{\calL'_{\calP}}
=\beta^{-1}Q^{-1}\underline\alpha^{-1}\Ok_{\calP}^{m,n}\underline\mu^{-1}.$
Also, $U\in\calL'_{\calP}\,^tD$ if and only if $U'\in\Ok_{\calP}^{m,n}\,^tD'$, and
$UB\in\widetilde{\calL'_{\calP}}D$ if and only if
$Q'U'B'\in\Ok_{\calP}^{m,n}D'$.  Since $\calP|\det D$ and hence $\calP\nmid\stufe$,
we have that $Q'$ is unimodular over $\Ok_{\calP}$ (see the discussion in Section 1); thus
$UB\in\widetilde{\calL'_{\calP}}D$ if and only if $U'B'\in\Ok_{\calP}^{m,n}D'$.
Choose $E,G\in GL_n(\Ok_{\calP})$ so that
$$E\,^tD'G=\begin{pmatrix}I_r&0\\0&\pi D_1\end{pmatrix}$$
where $r=\rank_{\calP}D'$ and $\pi\Ok_{\calP}=\calP\Ok_{\calP}$
(here $\rank_{\calP}D'$ means the rank of $D'$ as a matrix over
$\Ok_{\calP}/\calP\Ok_{\calP}$).
Write 
$$E\,^tB'\,^tG^{-1}=\begin{pmatrix}B_{00}&B_{01}\\B_{10}'&B_{11}\end{pmatrix}$$
where $B_{00}$ is $r\times r$.  By the symmetry of $^tB'D'$, 
we get $B_{01}\equiv0\ (\calP\Ok_{\calP}),$ and by the fact that 
$\begin{pmatrix}A'&B'\\C'&D'\end{pmatrix}\in SL_{2n}(\Ok_{\calP})$, we find that
$B_{11}$ is invertible over $\Ok_{\calP}$.
Now take 
$$X=G^{-1}B'\,^tE+\begin{pmatrix}I-\,^tB_{00}\\&0\end{pmatrix}\,^tGD'\,^tE
=\begin{pmatrix}I&\pi B_{01}\,^tD_1\\^tB_{01}&^tB_{11}\end{pmatrix};$$
so $X$ is an invertible matrix in $\Ok_{\calP}^{n,n}$.
Also, an easy check shows that
$\,^tX(\,^tGD'\,^tE)=(E\,^tD'G)X$.  Then
\begin{align*}
U'B'\in\Ok_{\calP}^{m,n}D'
 &\iff
U'GX \in \Ok_{\calP}^{m,n}(\,^tGD'\,^tE)\\
&\iff
U'G\in\Ok_{\calP}^{m,n}\,^tX^{-1}(E\,^tD'G)\\
&\iff
U'\in\Ok_{\calP}^{m,n}\,^tX^{-1}E\,^tD'=\Ok_{\calP}^{m,n}\,^tD'.
\end{align*}
Since this holds for all prime ideals $\calP|\det D$, we see that as $U$ varies over $\calL'/\calL'\,^tD$, $UB+Y$ varies over $\widetilde{\calL}'/\widetilde{\calL}' D$.

Thus in our last expression for $\theta(\calL';(A\tau+B)(C\tau+D)^{-1})$, we can simplify the sum on $U$ (eliminating the terms with $Y$), and reverse the order of summation.  Then another application of the Inversion Formula (Theorem 2.4) yields the proposition.
\end{proof}

Now we evaluate the sum on $U$ in the above propostion.

\begin{prop}  With the notation be as in the previous proposition, we have
$$\sum_{U\in\calL'/\calL'\,^tD}\e\{Q[U]BD^{-1}\}
= N(\det D)^k\,\chi_L(\det D)$$
where $\chi_L$ is as defined in the introduction.
\end{prop}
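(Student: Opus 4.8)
The plan is to evaluate this Gauss sum one prime at a time. Since $\det D\in\Ok$ and, as noted in the previous proposition, $\Ok_\calP\calL'\,^tD=\Ok_\calP\calL'$ for every prime $\calP\nmid\det D$, the Chinese Remainder Theorem gives an isomorphism $\calL'/\calL'\,^tD\cong\prod_{\calP\mid\det D}\Ok_\calP\calL'/\Ok_\calP\calL'\,^tD$, and (pairing through the different $\partial$) the map $U\mapsto\e\{Q[U]BD^{-1}\}$ splits as a product of additive characters supported at the individual primes. Hence the sum factors as $\prod_{\calP\mid\det D}G_\calP$, where $G_\calP$ denotes the local sum over $\Ok_\calP\calL'/\Ok_\calP\calL'\,^tD$. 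On the other side, $N(\det D)=N(\sgn\det D)\prod_\calP N(\calP)^{\ord_\calP\det D}$ and $\chi_L(\det D)=(N(\sgn\det D))^k\chi^*((\det D)\Ok)$, so the archimedean signs contribute $(N(\sgn\det D))^{2k}=1$ and drop out; it therefore suffices to prove $G_\calP=\big(N(\calP)^k\chi^*(\calP)\big)^{\ord_\calP\det D}$ for each $\calP\mid\det D$. This is where evenness of $m=2k$ first pays off: it forces the target to be a genuine $\pm$-valued character times an integer power of $N(\calP)$, with no surviving square root or root of unity.

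For the local evaluation I would use the normalized data of the previous proposition: the unimodular form $Q'=\eta^{-1}\underline\alpha Q\underline\alpha$ of even rank $m=2k$ (unimodular precisely because $\calP\nmid\stufe$), and an integral symplectic representative $\begin{pmatrix}a&b\\c&d\end{pmatrix}\in Sp_n(\Ok_\calP)$ underlying $\gamma$ at $\calP$, with $\stufe\mid c$ and $\det d=\det D$. Writing $U=\underline\alpha U'\underline\mu$ and simplifying with the cyclicity of the matrix trace, the local sum takes the shape $G_\calP=\sum_V\e\{\beta^{-1}Q'[V]\,bd^{-1}\}$, where $V$ runs over a finite quotient of $\Ok_\calP^{m,n}$ whose defining modulus has determinant of $\calP$-valuation $\ord_\calP\det D$, and $\beta\Ok_\calP=\partial\Ok_\calP$ supplies the $\partial^{-1}$ that turns $\e\{\cdot\}$ into an additive character of the finite quotient. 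I would then diagonalize the modulus by unimodular row and column operations, putting it in the form $\diag(\pi^{f_1},\dots,\pi^{f_n})$ with $\sum_jf_j=\ord_\calP\det D$; the symmetry of $^tbd$ together with the invertibility of the block $B_{11}$ from the previous proposition lets one arrange the paired matrix $bd^{-1}$ compatibly, decoupling $G_\calP$ into $n$ independent blocks. The $j$-th block is a Gauss sum for the rank-$2k$ form $Q'$ modulo $\calP^{f_j}$ against a unit coefficient.

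Each block is then a classical quadratic Gauss sum. For $\calP\nmid2\Ok$ I would diagonalize $Q'$ over $\Ok_\calP$ (by Section 92 of \cite{O'M} an orthogonal sum of hyperbolic planes and at most one anisotropic plane) and reduce to a product of $2k$ one-variable Gauss sums modulo $\calP^{f_j}$. Evenness is again decisive: the Gauss-sum phase $\epsilon^{2k}$ collapses to $\left(\frac{-1}{\calP}\right)^{kf_j}$, the square-root factors multiply to $N(\calP)^{kf_j}$, and the dependence on the unit coefficient disappears, the rank being even; the remaining Legendre symbols assemble into $\left(\frac{(-1)^k\det Q'}{\calP}\right)^{f_j}$. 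Since $m=2k$, one checks $(-1)^k\det Q'\equiv(-1)^k(\eta')^{m}(\alpha_1\cdots\alpha_m)^2\det Q$ modulo squares of units, so this symbol is exactly $\chi^*(\calP)^{f_j}$. Multiplying over $j$ gives $G_\calP=N(\calP)^{k\sum_jf_j}\chi^*(\calP)^{\sum_jf_j}=\big(N(\calP)^k\chi^*(\calP)\big)^{\ord_\calP\det D}$, as required.

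I expect the dyadic primes $\calP\mid2\Ok$ to be the main obstacle. There $Q'$ need not diagonalize, and by 93:18 of \cite{O'M} it is an orthogonal sum of hyperbolic planes and possibly one even anisotropic plane $\begin{pmatrix}2&1\\1&2\epsilon\end{pmatrix}$; the block Gauss sums must be computed directly on these binary forms, and one must show their sign is governed by the quadratic defect of $1-4\epsilon$ so as to match the Kronecker-type symbol $\left(\frac{\cdot}{\calP}\right)$ fixed in Section 1. The well-definedness observation of Section 1, that $(-1)^k(\eta')^{m}(\alpha_1\cdots\alpha_m)^2\det Q$ lies in $\calU^2$ or $(1-4\epsilon)\calU^2$ and hence that $\chi^*(\calP)$ is independent of the chosen symbol, is exactly what makes this identification canonical. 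Once the dyadic blocks are shown to contribute $N(\calP)^{kf_j}\chi^*(\calP)^{f_j}$, the product formula for $G_\calP$, and thereby the proposition, follows as above.
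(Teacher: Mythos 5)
Your overall route is the same as the paper's: split the character sum over the primes dividing $\det D$, pass to the local unimodular data $Q'$, $B'$, $D'$ from Proposition 2.5, reduce to classical quadratic Gauss sums, evaluate these separately at odd and dyadic primes, and identify the resulting product of Legendre/Kronecker symbols with $\chi^*$ via the discriminant (and, at dyadic primes, the quadratic defect). Your archimedean bookkeeping, i.e.\ $N(\det D)^k\chi_L(\det D)=|N(\det D)|^k\chi^*((\det D)\Ok)$, is correct and is a point the paper leaves implicit. Your odd-prime evaluation is also correct: for even rank $m=2k$ the Gauss-sum constants combine to $\left(\frac{-1}{\calP}\right)^kN(\calP)^k$, the unit coefficient drops out as a $2k$-th power, and what survives is exactly $\left(\frac{(-1)^k\det Q'}{\calP}\right)=\chi^*(\calP)$ raised to the valuation of $\det D$.

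Two caveats. First, your ``decoupling into $n$ independent blocks'' after Smith-reducing the modulus is not automatic: the coefficient matrix $B'(D')^{-1}$ is symmetric but not diagonal, so cross terms between columns must be removed (by a congruence/Jordan splitting of $B'(D')^{-1}$, or the recursive reduction the paper invokes from Section 5 of \cite{theta eis}); since the paper also defers this step to ``rather standard techniques,'' you are at parity there. Second, and more substantively, the dyadic case is the one place where your proposal stops at a statement of what must be proved, and it is precisely where the paper's proof does real work: it reduces to the two binary mod-$\calP$ sums $\sum_{x,y}\e\{2xy\omega\}$ and $\sum_{x,y}\e\{2(x^2+xy+\varepsilon y^2)\omega\}$, evaluates the first to $N(\calP)$, and evaluates the second to $-N(\calP)$ by observing that $1+y+\varepsilon y^2$ is always a unit --- otherwise $1-4\varepsilon=(1+2\varepsilon y)^2-4\varepsilon(1+y+\varepsilon y^2)$ would have quadratic defect contained in $4\calP$, contradicting that the defect is $4\Ok_{\calP}$ --- so the inner character sums vanish. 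Until that computation (or an equivalent prime-power version) is written out, your claim that each dyadic block contributes $N(\calP)^{kf_j}\chi^*(\calP)^{f_j}$ is an announced plan rather than an argument; everything else in your proposal is sound.
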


\begin{proof}  Let the notation be as in the previous proof.
With $\frakD=(\det D)\Ok$, we have
\begin{align*}
&\sum_{U\in\calL'/\calL'\,^tD} \e\{Q[U]BD^{-1}\}\\
&\quad=
\sum_{U\in\Ok^{m,n}/\Ok^{m,n}\,^tD}
\e\left\{Q\left[
\big<\alpha_1,\cdots,\alpha_m\big>U
\big<\mu_1,\cdots,\mu_n\big>\right]BD^{-1}\right\}\\
&\quad=
\sum_{U\in\Ok^{m,n}/\Ok^{m,n}\,^tD'}
\e\{\beta' Q'[U]B'(D')^{-1}\}\\
&\quad=
N(\frakD)^{m(1-n)}
\sum_{U\in\Ok^{m,n}/\frakD\Ok^{m,n}} \e\{\beta' Q'[U]B'(D')^{-1}\}\\
&\quad=
\prod_{\calP^e\parallel\frakD} N(\calP^e)^{m(1-n)}
\sum_{U\in\calP^{-e}\frakD\Ok^{m,n}/\frakD\Ok^{m,n}}
\e\{\beta' Q'[U]B'(D')^{-1}\}.
\end{align*}
Now rather standard techniques (such as those used in Section 5 \cite{theta eis}, which are local arguments) can be used to
reduce this computation to computations of more manageable sums.  

For $\calP\nmid 2$, we are left with computing sums of the shape
$$\sum_{u\in\Ok^{m,1}/\calP\Ok^{m,1}}\e\{Q[u]\omega\}$$
where $\omega\in\partial^{-1}\calP^{-1}$ with $\omega\Ok_{\calP}=\partial^{-1}\calP^{-1}\Ok_{\calP}$.
We replace $u$ by $Eu$ where $E\in SL_n(\Ok)$ so that
$Q[E]\equiv 2Q'\ (\calP)$ where $Q'\in\Ok^{m,m}$ is diagonal modulo $\calP$; this reduces the computation to that of
evaluating $\sum_{x\in\Ok/\calP}\e\{2x^2b\omega\}$ where $b\in\Ok\smallsetminus\calP$, and now standard techniques can be used
to show that
$$\sum_{x\in\Ok/\calP}\e\{2x^2b\omega\}=\left(\frac{b}{\calP}\right)
\sum_{x\in\Ok/\calP}\e\{2x^2\omega\}$$
and
$$\left(\sum_{x\in\Ok/\calP}\e\{2x^2\omega\}\right)^2=\left(\frac{-1}{\calP}\right)N(\calP).$$

Suppose $\calP|2$; then $u$ can be replaced by $Eu$ where $E\in SL_n(\Ok)$ so that modulo $2\calP$, $Q[E]$ is either an orthonal sum of $k$ copies of the matrix $\begin{pmatrix}0&1\\1&0\end{pmatrix}$, or it is the orthogonal sum of $k-1$ copies of $\begin{pmatrix}0&1\\1&0\end{pmatrix}$ and 1 copy of $\begin{pmatrix}2&1\\1&2\varepsilon\end{pmatrix}$ where $\varepsilon\in\Ok$ so that in $\Ok_{\calP}$, $1-4\varepsilon$ has quadratic defect $4\Ok_{\calP}$.  Thus the sums to be evaluated are now
$$\sum_{x,y\in\Ok/\calP}\e\{2xy\omega\},\ 
\sum_{x,y\in\Ok/\calP}\e\{2(x^2+xy+\varepsilon y^2\omega)\}$$
where $\omega\in\partial^{-1}\calP^{-1}$ with $\omega\Ok_{\calP}=\partial^{-1}\calP^{-1}\Ok_{\calP}$.
To evaluate the first sum, we sum first on $x$; when $x\not\in\calP$, this is a complete character sum yielding 0, and when $x\in\calP$ we get $N(\calP)$.
Now consider the second sum; when $x\not\in\calP$, we replace $y$ by $xy$ and recall that as $x$ varies over $(\Ok/\calP)^{\times}$, so does $x^2$.  Hence we get
\begin{align*}
&\sum_{x,y\in\Ok/\calP}\e\{2(x^2+xy+\varepsilon y^2)\omega\}\\
&\quad=
\sum_{x\in(\Ok/\calP)^{\times}}\sum_{y\in\Ok/\calP}\e\{2x^2(1+y+\varepsilon y^2)\omega\}
+ \sum_{y\in\Ok/\calP}\e\{2\varepsilon y^2\omega\}\\
&\quad=
\sum_{y\in\Ok/\calP}\sum_{x\in(\Ok/\calP)^{\times}}\e\{2x(1+y+\varepsilon y^2)\omega\}
+ \sum_{y\in\Ok/\calP}\e\{2\varepsilon y\omega\}.
\end{align*}
We have $\sum_{y\in\Ok/\calP}\e\{2\varepsilon y\omega\}=0$, as this sum on $y$ is a complete character sum (with a nontrivial character).  Similarly,
$$\sum_{x\in\Ok/\calP}\e\{2x(1+y+\varepsilon y^2)\omega\}=0$$
whenever $1+y+\varepsilon y^2\not\in\calP$.  Since $1-4\varepsilon$ has quadratic defect $4\Ok_{\calP}$ in $\Ok_{\calP}$, one checks that we cannot have $1+y+\varepsilon y^2\in\calP$ for any $y\in\Ok$.  Thus
$$\sum_{x,y\in\Ok/\calP}\e\{2(x^2+xy+\varepsilon y^2)\omega\}
=-N(\calP).$$

The proposition now follows from the definition of $\chi_L$, as described in the introduction.
\end{proof}

Now we can state the main result of this section.

\begin{thm} (Transformation Formula)
 Let $\calL'=\calL\big<\I_1,\ldots,\I_n\big>$, $\gamma=\begin{pmatrix}A&B\\C&D\end{pmatrix}
\in\Gamma(\stufe;\I_1,\ldots,\I_n;\n),$ and let
$\chi_L$ 
be as defined in the introduction.  
We have
$$\theta(\calL';\tau)|\gamma=\chi_L(\det D)\,\theta(\calL';\tau).$$
\end{thm}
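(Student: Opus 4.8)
The plan is to read off the transformation formula on the open locus where $\det D\neq0$ by combining Propositions 2.6 and 2.7, and then to extend to the remaining case (where $\det D$ may vanish) by composing with a translation. Throughout recall that, by definition of the slash, $\theta(\calL';\tau)|\gamma=\det(N(C\tau+D))^{-k}\theta(\calL';(A\tau+B)(C\tau+D)^{-1})$, so the whole statement amounts to pinning down the automorphy factor manufactured by Proposition 2.6.

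First I would treat $\gamma$ with $\det D\neq0$. Substituting the evaluation $\sum_{U\in\calL'/\calL'\,^tD}\e\{Q[U]BD^{-1}\}=N(\det D)^k\chi_L(\det D)$ from Proposition 2.7 into the formula of Proposition 2.6 gives
$$\theta(\calL';(A\tau+B)(C\tau+D)^{-1})=\det(N(-i\tau(C\tau+D)^{-1}D))^{-k}\det(N(-i\tau))^{k}N(\det D)^k\chi_L(\det D)\,\theta(\calL';\tau).$$
The key computation is then to collapse the determinant prefactors. Since $N(M)=\prod_{i=1}^d M^{(i)}$ and hence $\det(N(M))=\prod_i\det(M^{(i)})$, I would work conjugate by conjugate: in the $i$th factor the scalars $(-i)^{nk}$, $\det(\tau_i)^{k}$ and $\det(D^{(i)})^{k}$ cancel in pairs, yielding
$$\det(N(-i\tau(C\tau+D)^{-1}D))^{-k}\det(N(-i\tau))^{k}=\det(N(C\tau+D))^{k}N(\det D)^{-k}.$$
Multiplying through, the right-hand side of the first display collapses to $\det(N(C\tau+D))^{k}\chi_L(\det D)\,\theta(\calL';\tau)$, and applying $\det(N(C\tau+D))^{-k}$ gives $\theta(\calL';\tau)|\gamma=\chi_L(\det D)\,\theta(\calL';\tau)$, as claimed.

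Next I would handle $\gamma$ with $\det D=0$; by the argument in the proof of Proposition 2.6 that $\calP\mid\det D$ forces $\calP\nmid\stufe$, this can only occur when $\stufe=\Ok$. Here I would use a translation trick: choose a symmetric $S\in(\n\partial)^{-1}X^{-1}\Ok^{n,n}X^{-1}$, so that $t=\begin{pmatrix}I&S\\0&I\end{pmatrix}\in\Gamma(\stufe;\I_1,\ldots,\I_n;\n)$, with $\det(CS+D)\neq0$. Such $S$ exists because the admissible $S$ form a Zariski-dense lattice while $\{\det(CS+D)=0\}$ is a proper subvariety, the defining polynomial not being identically zero since $\det(C\tau+D)\neq0$ on $\h_{(n)}$. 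Writing $\gamma=(\gamma t)t^{-1}$, both $\gamma t$ and $t^{-1}$ lie in $\Gamma$ with nonvanishing lower-right determinant, so the first case applies to each; since the integer-weight slash is a genuine right action (the factor $\det(N(C\tau+D))^{-k}$ satisfies the usual cocycle relation), I obtain
$$\theta(\calL';\tau)|\gamma=(\theta(\calL';\tau)|\gamma t)|t^{-1}=\chi_L(\det(CS+D))\,(\theta(\calL';\tau)|t^{-1})=\chi_L(\det(CS+D))\,\theta(\calL';\tau),$$
using $\theta(\calL';\tau)|t^{-1}=\chi_L(1)\,\theta(\calL';\tau)=\theta(\calL';\tau)$.

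The main obstacle is the $\det D=0$ case, and specifically making sense of the asserted value $\chi_L(\det D)$ there: one must verify that $\chi_L(\det(CS+D))$ is independent of the admissible choice of $S$, which is the correct reading of the right-hand side when $\det D$ vanishes (in the level-one situation $\chi_L$ is unramified, and this common value is the quadratic/genus character attached to $\det(CS+D)$). By contrast the determinant bookkeeping of the first case is routine once one is careful to apply $N$ and $\det$ in the right order across the $d$ embeddings, and the existence of a suitable translation $S$ is a standard density argument.
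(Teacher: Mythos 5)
Your treatment of the nondegenerate case is exactly the paper's: you substitute the character-sum evaluation (the paper's Proposition 2.6; you cite it as 2.7) into the inversion-formula identity (the paper's Proposition 2.5), and the conjugate-by-conjugate cancellation
$$\det(N(-i\tau(C\tau+D)^{-1}D))^{-k}\,\det(N(-i\tau))^{k}=\det(N(C\tau+D))^{k}\,N(\det D)^{-k}$$
is the intended bookkeeping; since $k$ is an integer there are no branch issues, and this part is correct. For $\det D=0$, you and the paper share the same strategy---right-multiply $\gamma$ by a group element whose lower-right block has nonzero determinant, then use the cocycle property of the weight-$k$ slash---but the mechanisms differ. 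The paper multiplies by a matrix $\begin{pmatrix}{}^tG^{-1}&W\\0&G\end{pmatrix}$ with $G\in GL_n(\Ok)$, whose existence comes from the classical fact that $(C\ D)$ has rank $n$ and $C\,{}^tD$ is symmetric; you multiply by a pure translation $t=\begin{pmatrix}I&S\\0&I\end{pmatrix}$ and get an admissible $S$ with $\det(CS+D)\neq0$ from Zariski density of the lattice of admissible symmetric $S$ together with the nonvanishing of $\det(C\tau+D)$ on $\h_{(n)}$. Your density argument is sound (a full lattice in the $\kfld$-vector space of symmetric matrices is Zariski dense, and $S\mapsto\det\bigl(C^{(1)}S^{(1)}+D^{(1)}\bigr)$ is not the zero polynomial), and it is if anything a cleaner route to the auxiliary matrix than the paper's rank argument.

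Where you stop short is the endgame of the degenerate case. You prove $\theta(\calL';\tau)|\gamma=\chi_L(\det(CS+D))\,\theta(\calL';\tau)$, but then leave the constant in that form, declare it ``the correct reading'' of $\chi_L(\det D)$, and flag independence of the choice of $S$ as an unresolved verification. Two points. First, that independence is already free from your own computation: $\theta|\gamma$ does not depend on $S$ and $\theta\not\equiv0$, so $\chi_L(\det(CS+D))$ is automatically the same for every admissible $S$. Second, and more substantively, the theorem as stated requires identifying this constant with $\chi_L(\det D)$ when $\det D=0$, and the only way this parses is the way the paper closes the case: $\det D=0$ forces $\stufe=\Ok$, and at level $\Ok$ the character $\chi_L$ is trivial, so the constant is $1$. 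Your parenthetical suggestion that the common value is a possibly nontrivial ``genus character'' value is the one step that would actually break the statement if it were right; in fact it cannot be nontrivial, because $\kfld\bigl(\sqrt{(-1)^k\det Q}\bigr)/\kfld$ is then unramified at every finite prime and the factor $(N(\sgn y))^k$ in the definition of $\chi_L$ cancels the contribution of the infinite places, so by Artin reciprocity $\chi_L$ is trivial on principal ideals. With that one observation added (which is also where the paper itself is briefest, asserting triviality rather than proving it), your argument lands exactly on the stated formula.
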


\begin{proof}  
When $\det D\not=0$, this is proved by the previous two propositions.  So suppose that $\det D=0$.
This can only happen in the case that $\stufe=\Ok$, in which case $\chi_L$ is the trivial character modulo $\Ok$.  Using that the rank of $(C\ D)$ is $n$ and $C\,^tD$ is symmetric, one can find $G\in GL_n(\Ok)$ and $W\in\Ok^{n,n}$ so that with
$$\begin{pmatrix}A'&B'\\C'&D'\end{pmatrix}=
\begin{pmatrix}A&B\\C&D\end{pmatrix}
\begin{pmatrix}^tG^{-1}&W\\0&G\end{pmatrix},$$
$\det D'\not=0$.  Thus
$$\theta(\calL';\tau)=
\theta(\calL';\tau)|\begin{pmatrix}A'&B'\\C'&D'\end{pmatrix},$$
and so
$$\theta(\calL';\tau)=\theta(\calL';\tau)|
\begin{pmatrix}^tG^{-1}&W\\0&G\end{pmatrix}^{-1}=
\theta(\calL';\tau)|\begin{pmatrix}A'&B'\\C'&D'\end{pmatrix}.$$
Thus the theorem follows.
\end{proof}

\end{subsection}
\bigskip

\section{Action of the $S_{\ell}(\calQ)$ operators on theta series}
\smallskip

In \cite{CW}, when  $\stufe =\Ok$ and $\chi_L=1$, we defined linear maps whose composition takes
$$\M^{(n)}_k(\Gamma_0(\Ok;\calQ_1^{-1}\I_1,\cdots,\calQ_n^{-1}\I_n;\n )) \text{ to }
\M{(n)}_k(\Gamma_0(\Ok;\I_1,\ldots,\I_n;\n )).$$  Here we generalize these maps to allow nontrivial level and character.  Then we evaluate the action of the maps on theta series.  


Fix $\ell$ where $1\le \ell\le n$, and fix a fractional ideal $\calQ$ so that for every prime ideal $\calP|\stufe $, we have $\ord_{\calP}(\calQ)=0$.  With $\I_1,\ldots,\I_n$ fixed fractional ideals,
set
$$\Gamma'=\Gamma_0(\stufe ;\I_1,\ldots,\I_n;\n ) \text{ and }\Gamma''=\Gamma_0(\stufe ;\I_1',\ldots,\I_n';\n )$$
where
$$\I_i'=\begin{cases}\I_i&\text{if $i\not=\ell$,}\\ \calQ^{-1}\I_{\ell}&\text{otherwise.}
\end{cases}$$
Take 
$$\begin{pmatrix}w&x\\y&z\end{pmatrix}\in
\begin{pmatrix}\calQ&\calQ(\I_{\ell}^2\n \partial)^{-1}\\
\calQ^{-1}\stufe \I_{\ell}^2\n \partial&\calQ^{-1}\end{pmatrix}$$
so that $wz-xy=1$.  Set
\begin{align*}
W&=\big<I_{\ell-1},w,I_{n-\ell}\big>,\ 
X=\big<0_{\ell-1},x,0_{n-\ell}\big>,\\
Y&=\big<0_{\ell-1},y,0_{n-\ell}\big>,\ 
Z=\big<I_{\ell-1},z,I_{n-\ell}\big>,\\
\delta&=\begin{pmatrix}W&X\\Y&Z\end{pmatrix}.
\end{align*}
A straightforward check shows that $\delta\Gamma'\delta^{-1}\subseteq\Gamma''$ and $\delta^{-1}\Gamma''\delta\subseteq\Gamma'$.

We will use $\delta$ to define $S_{\ell}(\calQ):\M^{(n)}_k(\Gamma'',\chi)\to\M^{(n)}_k(\Gamma',\chi)$.
Toward this, we prove following.

\begin{prop}  Let $L=\calA_1x_1\oplus\cdots\oplus\calA_mx_m$,
$\calL=\big<\calA_1,\ldots,\calA_m\big>\Ok^{m,n}$ be as fixed in Section 1, with $\q$ the quadratic form on $L$ given by the matrix $Q$ (relative to the basis
$(x_1,\ldots,x_m)$).
With $\I_j$, $\I'_j$ ($1\le j\le n$) and $\delta$ as above, set
$$\calL'=\calL\big<\I_1,\cdots,\I_n\big>
\text{ and }
\calL''=\calL\big<\I'_1,\cdots,\I'_n\big>.$$
Then
\begin{align*}
\theta(\calL'';\delta\tau)
&=
\det(N(-i\tau(Y\tau+Z)^{-1}Z))^{-m/2} \det(N(-i\tau))^{m/2}\\
&\quad\cdot
\sum_{U\in\calL''/\calL'Z}\e\{Q[U]XZ^{-1}\}\theta(\calL';\tau).
\end{align*}
\end{prop}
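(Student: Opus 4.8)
The plan is to mirror the proof of the transformation formula (Proposition 2.5), the only essential new feature being that $\delta$ carries $\calL'$ into the genuinely different lattice $\calL''$ (the two agree in every slot except the $\ell$-th, where the ideals differ by $\calQ$). First I would record the structural facts. Since $wz-xy=1$ and the blocks $W,X,Y,Z$ are scalar-in-slot-$\ell$ (hence symmetric), we have $\delta\in Sp_n(\kfld)$ and the Siegel-type identity $\delta\tau = Z^{-1}X + Z^{-1}\tau(Y\tau+Z)^{-1}$, derived exactly as at the start of the proof of Proposition 2.5 using that $X,Z$ are symmetric. Two arithmetic observations drive everything: because $x\in\calQ(\I_\ell^2\n\partial)^{-1}$ and $y\in\calQ^{-1}\stufe\I_\ell^2\n\partial$ we get $xy\in\stufe$, so $wz\equiv1\ (\stufe)$ and hence $z$ is coprime to $\stufe$; and because $z\in\calQ^{-1}$ we have $\calL'Z\subseteq\calL''$ of finite index $N(z\calQ)^m$, which (as $z$ and $\calQ$ are both coprime to $\stufe$) is supported only at primes where $Q$ is $\n$-modular.

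Next I would split $\theta(\calL'';\delta\tau)=\sum_{U''\in\calL''}\e\{Q[U'']\delta\tau\}$ over the cosets $\calL''/\calL'Z$, writing $U''=U+V$ with $V\in\calL'Z$, and use the decomposition of $\delta\tau$. The $Z^{-1}X$-part factors off because $\e\{Q[U+V]Z^{-1}X\}=\e\{Q[U]Z^{-1}X\}$ for $V\in\calL'Z$: the cross and quadratic correction terms, after applying $\sigma$, land in $2\partial^{-1}$ (using $B_{\q}(L,L)\subseteq\n$, $\q(L)\subseteq2\n$, and the size of $x$), hence contribute trivially. Factoring $Z$ out of the remaining piece and using ${}^tZ=Z$ turns the inner sum over $\calL'Z$ into the shifted theta series $\theta(\calL',UZ^{-1};\tau(Y\tau+Z)^{-1}Z)$, giving $\theta(\calL'';\delta\tau)=\sum_{U\in\calL''/\calL'Z}\e\{Q[U]Z^{-1}X\}\,\theta(\calL',UZ^{-1};\tau(Y\tau+Z)^{-1}Z)$, the exact analogue of the first display in the proof of Proposition 2.5.

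I would then apply the Inversion Formula (Theorem 2.4) to each shifted theta series. Using $[\tau(Y\tau+Z)^{-1}Z]^{-1}=Z^{-1}Y+\tau^{-1}$, the factor $\e\{-Q[Y']\tau^{-1}\}$ separates, leaving an inner sum over $U\in\calL''/\calL'Z$ of $\e\{Q[U]Z^{-1}X+2\,{}^tY'QUZ^{-1}-Q[Y']Z^{-1}Y\}$, and the crux is that this inner sum is independent of $Y'\in\widetilde{\calL'}$. As in Proposition 2.5 the symplectic relations rewrite the summand as $\e\{-Q[UX+Y']Z^{-1}Y\}$, so it remains to check that $UX+Y'$ runs over a full set of representatives of $\widetilde{\calL'}/\widetilde{\calL''}Z$ as $U$ runs over $\calL''/\calL'Z$. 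Since $X,Z$ are supported only in slot $\ell$, this reduces to the $\ell$-th column: multiplication by $x$ should carry the column-$\ell$ quotient of $\calL''/\calL'Z$ isomorphically onto that of $\widetilde{\calL'}/\widetilde{\calL''}Z$. I would verify this locally at the primes dividing $z\calQ$ — all coprime to $\stufe$, where $Q$ is unimodular so that the column-$\ell$ module of $\widetilde{\calL'}$ equals $\partial^{-1}\I_\ell^{-1}\n^{-1}\big<\calA_1,\dots,\calA_m\big>\Ok^{m,1}$, matching the image of multiplication by $x$ — while at primes dividing $\stufe$ both quotients are trivial. With independence established, evaluating the inner sum at $Y'=0$ identifies it as $\sum_{U\in\calL''/\calL'Z}\e\{Q[U]XZ^{-1}\}$ (note $Z^{-1}X=XZ^{-1}$, both scalar in slot $\ell$).

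Finally I would run the Inversion Formula backwards on the residual sum, $\sum_{Y'\in\widetilde{\calL'}}\e\{-Q[Y']\tau^{-1}\}=\sqrt{\det\Phi(\calL')}\,\det(N(-i\tau))^{m/2}\,\theta(\calL';\tau)$, which cancels the $1/\sqrt{\det\Phi(\calL')}$ from the first inversion and, together with $\det(N(-i\tau(Y\tau+Z)^{-1}Z))^{-m/2}$, yields precisely the stated constants. The main obstacle is the independence-of-$Y'$ step and its underlying local bijection: it is the analogue of the hardest part of Proposition 2.5, but here it simplifies considerably, since $\delta$ acts nontrivially only in slot $\ell$ and the rank/diagonalization machinery ($E,G$, and the auxiliary $X$) collapses to a single scalar multiplication by $x$ on the $\ell$-th column.
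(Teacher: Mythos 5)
Your proposal is correct and follows essentially the same route as the paper: the same coset decomposition of $\theta(\calL'';\delta\tau)$ into shifted theta series $\theta(\calL',UZ^{-1};\tau(Y\tau+Z)^{-1}Z)$, the same application of the Inversion Formula, the same key step of showing the sum on $U$ is independent of $Y'\in\widetilde{\calL'}$ via the correspondence $UX+Y'\leftrightarrow\widetilde{\calL'}/\widetilde{\calL''}Z$ checked locally, and the same inversion at the end. The only cosmetic difference is that you verify the local bijection directly (multiplication by $x$ on the $\ell$-th column at primes dividing $z\calQ$, using unimodularity of $Q$ away from $\stufe$), whereas the paper reduces to local module isomorphisms and cites Proposition 2.1 of \cite{thesis} for the same fact.
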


\begin{proof}  We have
\begin{align*}
\theta(\calL'';\delta\tau)
=\sum_{U\in\calL''/\calL'\,Z}\e\{Q[U]XZ^{-1}\}
\theta(\calL',UZ^{-1};\tau(Y\tau+Z)^{-1}Z).
\end{align*}
Then we follow the argument of Proposition 2.5 to finish proving this proposition.
(To see that with $U'\in\widetilde{\calL'}$, $UX+U'$ varies over $\widetilde\calL'/\widetilde\calL''Z$ as $U$ varies over $\calL''/\calL' Z$:  take a prime ideal $\calP$ and set $e=\ord_{\calP}\calQ$, and take $\pi\in\calP$ so that $\Ok_{\calP}\pi=\Ok_{\calP}\calP$.  Take $z'\in\Ok_{\calP}$ so that $z=\pi^ez'$.  Then with
$Z'=\big<I_{\ell-1},z',I_{n-\ell}\big>$, we have natural isomorphisms
$\Ok_{\calP}\calL''/\Ok_{\calP}\calL'Z \approx \Ok_{\calP}\pi^{-e}\I_{\ell}L/\Ok_{\calP}\pi^{-e}\I_{\ell}Lz'$ and
$\Ok_{\calP}\widetilde\calL'/\Ok_{\calP}\widetilde\calL''Z\approx \Ok_{\calP}\I_{\ell}^{-1}\widetilde L /\Ok_{\calP}\I_{\ell}^{-1}\widetilde Lz'.$
Then the argument used in Proposition 2.1 \cite{thesis} show that $UX+U'$ varies over $\widetilde \calL'/\widetilde\calL''Z$ as $U$ varies over $\calL''/\calL'Z$.
So the sum on $U$ is independent of $U'$ and hence can be taken with $U'=0$.)
\end{proof}

\smallskip\noindent
{\bf Definition.}  Let $\calQ$, $\delta$, $\Gamma'$, and $\Gamma''$ be as at the beginning of this section.  Take $f\in\M{(n)}_k(\Gamma'',\chi)$.  We define
$$f|S_{\ell}(\calQ)
=\overline\chi(\delta)\,f|\delta.$$
One easily verifies that if one changes the choice of $\delta$ (subject to the conditions placed on this choice), $S_{\ell}(\calQ)$ is well defined.  Also,
for $\begin{pmatrix}A&B\\C&D\end{pmatrix}\in\Gamma'$ and
$\begin{pmatrix}A'&B'\\C'&D'\end{pmatrix}
=\delta\begin{pmatrix}A&B\\C&D\end{pmatrix}\delta^{-1},$
we have $\det D'\equiv \det D\ (\stufe ).$
Thus 
$$S_{\ell}(\calQ):\M{(n)}_k(\Gamma'',\chi)\to\M{(n)}_k(\Gamma',\chi).$$
Also note that for $1\le \ell'\le n$ and $\calQ'$ a fractional ideal with order 0 at any prime ideal dividing $\stufe $, we have $S_{\ell}(\calQ)S_{\ell'}(\calQ')=S_{\ell'}(\calQ')S_{\ell}(\calQ).$
\smallskip

Now the techniques used to prove Propositions 2.5 and 2.6 give us the following.

\begin{thm}  Let $\calQ$ be a fractional ideal so that for every prime ideal $\calP|\stufe$, we have $\ord_{\calP}\calQ=0$.  With $\calL', \calL''$ as in Proposition 3.1 and $S(\calQ)$ defined as above, we have
$$\theta(\calL'';\tau)|S_{\ell}(\calQ)
=N(\calQ)^k \chi_L^*(\calQ)\,\theta(\calL';\tau).$$
\end{thm}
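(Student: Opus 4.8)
The plan is to evaluate $\theta(\calL'';\tau)|S_{\ell}(\calQ)$ by combining the transformation computation of Proposition 3.1 with the character-sum evaluation technique of Proposition 2.6. By definition $\theta(\calL'';\tau)|S_{\ell}(\calQ)=\overline\chi(\delta)\,\theta(\calL'';\tau)|\delta$, where $|\delta$ applies the weight-$k$ slash with $\delta=\begin{pmatrix}W&X\\Y&Z\end{pmatrix}$. First I would unwind the slash: $\theta(\calL'';\tau)|\delta=\det(N(Y\tau+Z))^{-k}\theta(\calL'';\delta\tau)$. Substituting the formula from Proposition 3.1 for $\theta(\calL'';\delta\tau)$, the factor $\det(N(-i\tau(Y\tau+Z)^{-1}Z))^{-m/2}\det(N(Y\tau+Z))^{-k}$ should collapse (using $m=2k$) to a clean power of $N(z)^k=N(\calQ)^{-k}$ times $\det(N(-i\tau))^{m/2}$-type factors that cancel against the remaining $\det(N(-i\tau))^{m/2}$; tracking these automorphy factors carefully is the bookkeeping step, and I expect the net power of $N(\calQ)$ to come out to $N(\calQ)^k$ after accounting for the index $[\calL'':\calL'Z]$.

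The core of the proof is evaluating the character sum $\sum_{U\in\calL''/\calL'Z}\e\{Q[U]XZ^{-1}\}$, exactly as in Proposition 2.6 but now localized at the primes dividing $\calQ$ rather than $\det D$. Since $X$ and $Z$ are block-diagonal supported in the $\ell$-th slot, the sum factors so that only the $\ell$-th column contributes nontrivially, reducing to a sum over $\Ok^{m,1}$ modulo a power of $\calP$ for each $\calP^e\parallel\calQ$. I would then split into the cases $\calP\nmid2$ and $\calP\mid2$ and reuse verbatim the Gauss-sum evaluations from Proposition 2.6: the identity $\sum_{x}\e\{2x^2b\omega\}=\left(\frac{b}{\calP}\right)\sum_x\e\{2x^2\omega\}$ together with $\left(\sum_x\e\{2x^2\omega\}\right)^2=\left(\frac{-1}{\calP}\right)N(\calP)$ in the odd case, and the analogous $2xy$ and $2(x^2+xy+\varepsilon y^2)$ sums in the dyadic case. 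The outcome of each local factor is $N(\calP)^{ek}$ times the local Legendre/Kronecker symbol, which by the definition of $\chi_L^*$ in the introduction assembles into $N(\calQ)^k\chi_L^*(\calQ)$ across all $\calP\mid\calQ$.

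The main obstacle will be confirming that the parametrization $U\mapsto UX+U'$ over $\widetilde\calL'/\widetilde\calL''Z$ behaves correctly at primes dividing $\calQ$, so that the sum is genuinely independent of $U'$ and the inversion-formula manipulation of Proposition 3.1 is legitimate — but this is precisely the content already established in the parenthetical remark at the end of Proposition 3.1, via the local isomorphisms $\Ok_{\calP}\calL''/\Ok_{\calP}\calL'Z\approx\Ok_{\calP}\pi^{-e}\I_{\ell}L/\Ok_{\calP}\pi^{-e}\I_{\ell}Lz'$ and the corresponding statement for $\widetilde\calL'$. The secondary subtlety is the extra character factor $\overline\chi(\delta)$ in the definition of $S_{\ell}(\calQ)$: I must check that the character contribution coming from the lower-right block $Z$ of $\delta$ is exactly cancelled by $\overline\chi(\delta)$, leaving the unnormalized symbol $\chi_L^*(\calQ)$ rather than $\chi_L(\calQ)$, which explains why the answer features $\chi_L^*$ and not the global character $\chi_L$. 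Once the automorphy factors and this character normalization are reconciled, the theorem follows by assembling the local computations.
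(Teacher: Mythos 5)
Your proposal is correct and takes essentially the same approach as the paper: the paper obtains Theorem 3.2 precisely by combining Proposition 3.1 with the Gauss-sum techniques of Propositions 2.5 and 2.6, which is your plan exactly (unwind the slash, collapse the automorphy factors via $m=2k$ to a power $N(z)^{-k}$, evaluate the local character sums at primes dividing the modulus, and let $\overline\chi_L(\delta)$ absorb the sign and $\chi^*_L(z\Ok)$ contributions so that only $N(\calQ)^k\chi^*_L(\calQ)$ survives). Your identification of the two subtleties --- the well-definedness of the coset parametrization handled in the parenthetical remark of Proposition 3.1, and the reason the answer involves $\chi^*_L$ rather than $\chi_L$ --- matches the intended bookkeeping.
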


\bigskip
\section{Action of the Hecke operators $T_j(\calP^2)$ on theta series}
\smallskip

Recall that we have fixed the lattice $L$.
For the duration of this section,
 we fix an integer $j$ with $1\le j\le n$, and we fix a prime ideal $\calP$
with $\calP\nmid\stufe$.
We evaluate 
$$\theta(L;\tau)|S_1(\calP^{-1})\cdots S_j(\calP^{-1}) T_j(\calP^2),$$
which lies in the same space as $\theta(L;\tau)$.

\smallskip\noindent
{\bf Local notation.}
Throughout this section, we take $\beta\in\partial$, $\beta'\in\partial^{-1}$ so that $\beta\beta'\equiv1\ (\calP)$, $\eta\in\n$, $\eta'\in\n$ so that $\eta\eta'\equiv1\ (\calP)$, $\pi\in\calP$, $\pi'\in\calP^{-1}$ so that $\pi\pi'\equiv1\ (\calP)$.
Also, with $r_0,r_2\in\Z_{\ge0}$ so that $r_0+r_2\le j$,
$r_1=j-r_0-r_2$, we set
$$P'_{r_2}=P'_{j;r_2}=\begin{pmatrix}I_{j-r_2}\\&0&I_{n-j}\\&I_{r_2}&0\end{pmatrix},\ 
P_{r_2}=P_{j;r_2}=\begin{pmatrix}P'_{j;r_2}\\&P'_{j;r_2}\end{pmatrix},$$
$$X_{r_0,r_2}=X^{(n)}_{r_0,r_2}=\big<\calP I_{r_0},I_{n-r_0-r_2},\calP^{-1}I_{r_2}\big>,
\ X_{r_1}=X^{(r_1+n-j)}_{r_1}=\big<\calP I_{r_i},I_{n-j}\big>,$$
and 
$$\K_{r_0,r_2}=\K^{(n)}_{r_0,r_2}=X_{r_0,r_2}GL_n(\Ok)
X_{r_0,r_2}^{-1}\cap GL_n(\Ok),$$
$$\K'_{r_1}=\K^{(n-r_0-r_2)}_{r_1}=X'_{r_1}GL_{n-r_0-r_2}(\Ok)(X'_{r_1})^{-1}
\cap GL_{n-r_0-r_2}(\Ok).$$

\smallskip
Using this notation, we have the following.

\begin{prop}  
With $r_0,r_2$ non-negative integers so that $r_0+r_2\le j$, set
$$\calL_{r_0,r_2}=\calL\big<\calP^{-1} I_{r_0}, I_{n-r_0-r_2},\calP I_{r_2}\big>.$$
Then with $r_0,r_2$ varying subject to the above conditions,
we have
$$\theta(L;\tau)|S_1\cdots S_j(\calP^{-1})T_j(\calP^2)
=\sum_{r_0,r_2}\theta(L;\tau)|S_1\cdots S_j(\calP^{-1})\,T_{j;r_0,r_2}(\calP^2)$$
where
\begin{align*}
&N(\calP)^{kj}\chi^*_L(\calP)^j \theta(L;\tau)|S_1\cdots S_j(\calP^{-1})T_{j;r_0,r_2}(\calP^2)\\
&\quad
=\chi_L(\det P'_{r_2})\, \sum_{Y',G}\chi_L(\det G)\, \theta(\calL_{r_0,r_2};\tau)|
\begin{pmatrix}G^{-1}&Y'\,^tG\\&^tG\end{pmatrix}|P_{r_2};
\end{align*}
here $Y', G$ are defined as follows.
We have

$$\begin{pmatrix}G^{-1}&Y'\,^tG\\&^tG\end{pmatrix}
=\begin{pmatrix}I&W\\&I\end{pmatrix} \begin{pmatrix}I&Y\\&I\end{pmatrix}
\begin{pmatrix}(G_0G_1)^{-1}\\&^t(G_0G_1)\end{pmatrix}
$$

where
$$W=\begin{pmatrix}0_{r_0}\\&W'\\&&0_{n-j-r_0}\end{pmatrix},
\ Y=\begin{pmatrix}Y_0&Y_2&0\\^tY_2\\0\end{pmatrix}$$
with 
$W'\in(\n\partial\calP)^{-1}\Ok^{r_1,r_1}_{\sym}$ so that
$\eta\delta\pi W'$ varies over $(\Ok^{r_1,r_1}_{\sym}/\calP \Ok^{r_1,r_1}_{\sym})^{\times},$
$Y_0\in(\n\partial)^{-1}\Ok^{r_0,r_0}_{\sym}$ so that
$\eta\delta Y_0$ varies over $\Ok^{r_0,r_0}_{\sym}/\calP^2 \Ok^{r_0,r_0}_{\sym},$
and
$Y_2\in(\n\partial)^{-1}\Ok^{r_0,n-r_2}$ so that
$\eta\delta Y_2$ varies over $\Ok^{r_0,n-r_2}/\calP \Ok^{r_0,n-r_2}$;
$$G_1=\begin{pmatrix}0_{r_0}\\&G_1'\\&&0_{n-j-r_0}\end{pmatrix}$$
with $G_1', G_0$ varying subject to the conditions $G_1'\in GL_{n-r_0-r_2}/\K'_{r_1},$ and
 $G_0\in GL_n(\Ok)/\,^t\K_{r_0,r_2}$
(so $G_0^{-1}$ varies over $\K_{r_0,r_2}\backslash GL_n(\Ok)$).
\end{prop}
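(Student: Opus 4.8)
The plan is to unwind the composite operator $S_1(\calP^{-1})\cdots S_j(\calP^{-1})T_j(\calP^2)$ into an explicit sum over coset representatives, to organize those representatives by the combinatorial type $(r_0,r_2)$, and then to translate each resulting piece back into a theta series on the scaled lattice $\calL_{r_0,r_2}$ using the transformation machinery of Section 2.

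First I would invoke the definition of $T_j(\calP^2)$ from earlier in this section, which presents it as a weighted sum over a complete set of representatives $\{\gamma_\alpha\}$ for the governing double coset, so that $\theta(L;\tau)|T_j(\calP^2)$ is a sum of the slashes $\theta(L;\tau)|\gamma_\alpha$. Each $\gamma_\alpha$, after normalizing by its similitude, lies in $Sp_n(\kfld)$, and its diagonal similitude part has elementary divisors drawn from $\{1,\calP,\calP^2\}$; I would let $r_0$ and $r_2$ count the divisors of the two extreme types and set $r_1=j-r_0-r_2$ for the middle block. Grouping the representatives according to $(r_0,r_2)$ splits $T_j(\calP^2)$ into the pieces $T_{j;r_0,r_2}(\calP^2)$, which is the first displayed decomposition. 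Applying $S_1(\calP^{-1})\cdots S_j(\calP^{-1})$ and invoking Theorem 3.2 for each factor simultaneously produces the normalizing constant $N(\calP)^{kj}\chi^*_L(\calP)^j$ and rewrites the ambient lattice from $\calL$ to $\calL_{r_0,r_2}=\calL\langle\calP^{-1}I_{r_0},I_{n-r_0-r_2},\calP I_{r_2}\rangle$; this accounts for the constant appearing on the left of the claimed identity.

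The heart of the argument is the explicit description of the representatives within a fixed type. I would show that, modulo the left action of the relevant congruence group, each such representative can be put in the form $\begin{pmatrix}G^{-1}&Y'\,{}^tG\\&{}^tG\end{pmatrix}P_{r_2}$ and then factored as
\[\begin{pmatrix}G^{-1}&Y'\,{}^tG\\&{}^tG\end{pmatrix}=\begin{pmatrix}I&W\\&I\end{pmatrix}\begin{pmatrix}I&Y\\&I\end{pmatrix}\begin{pmatrix}(G_0G_1)^{-1}\\&{}^t(G_0G_1)\end{pmatrix},\]
where the two unipotent factors collect the entries $W'$ (in the $r_1$ block) and $Y_0,Y_2$ (in the $r_0$ block), while the Levi factor collects $G_0$ and $G_1'$. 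I would then verify that as $W',Y_0,Y_2$ run over their prescribed residue ranges and $G_0,G_1'$ run over the coset spaces $GL_n(\Ok)/{}^t\K_{r_0,r_2}$ and $GL_{n-r_0-r_2}(\Ok)/\K'_{r_1}$, these matrices run exactly once over a full set of representatives for the $(r_0,r_2)$-piece. Feeding each such matrix into the transformation formula (Proposition 2.5 and Theorem 2.7) turns $\theta(\calL_{r_0,r_2};\tau)$, slashed by it, into the theta series carrying the character weight $\chi_L(\det G)$, while the block permutation $P_{r_2}$, which moves the scaled coordinates into position, contributes the factor $\chi_L(\det P'_{r_2})$; assembling the pieces yields the stated right-hand side.

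I expect the main obstacle to be the combinatorial bookkeeping in the previous step: proving that the labeling $(r_0,r_2,W',Y_0,Y_2,G_0,G_1')$ enumerates the $T_j(\calP^2)$-cosets bijectively, with no double counting between the two unipotent factors or the two Levi cosets, and tracking every power of $N(\calP)$ and every value of $\chi_L$ and $\chi^*_L$ through the conjugations by $\delta$ and by $P_{r_2}$ so that they coalesce precisely into $N(\calP)^{kj}\chi^*_L(\calP)^j$ on the left and $\chi_L(\det P'_{r_2})\chi_L(\det G)$ on the right. The analytic input is supplied entirely by the transformation formula of Section 2; what remains is to organize the group-theoretic data correctly.
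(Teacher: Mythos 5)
Your proposal takes essentially the same route as the paper's own proof: the paper likewise unwinds $T_j(\calP^2)$ into explicit coset representatives (obtained by generalizing \cite{CW}), groups them by the type $(r_0,r_2)$, factors each representative into scaling ($S$-operator), unipotent, and Levi parts, and absorbs the scalings --- together with the pre-applied $S_1(\calP^{-1})\cdots S_j(\calP^{-1})$ and the $P_{r_2}$ conjugation --- via Theorem 3.2 into the passage from $\calL$ to $\calL_{r_0,r_2}$, which is exactly where your constant $N(\calP)^{kj}\chi_L^*(\calP)^j$ and the lattice rescaling arise. The only substantive difference is that the paper cites \cite{CW} (with notation from \cite{W eis series}) for the coset enumeration and its compatibility with the character weights $\chi_L(\det G)$, whereas you propose to verify that bookkeeping directly; this is more work but the same content.
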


\begin{proof}  We know from Section 3 that $\theta(L;\tau)|S_1\cdots S_j(\calP^{-1})$
lies in the space $\M_k(\Gamma_0(\stufe;\I_1,\cdots,\I_n;\n),\chi_L)$ where
$I_{\ell}=\calP^{-1}$ for $1\le\ell\le j$, $I_{\ell}=\Ok$ otherwise.
It is simple to generalize \cite{CW} to find matrices for 
$$T_j(\calP^2): \M_k(\Gamma_0(\stufe;\I_1,\cdots,\I_n;\n),\chi_L)
\to \M_k(\Gamma_0(\stufe;\Ok,\cdots,\Ok;\n),\chi_L).$$
(To be well-defined, we precede the action of a matrix $\begin{pmatrix}A&B\\C&D\end{pmatrix}$ by
$\overline\chi_L(\det D)$. Also, to more easily describe the matrices from \cite{CW}, we use the notation from \cite{W eis series}.)
From this, and recalling that $\chi_L$ is quadratic, we have
\begin{align*}
&\theta(L;\tau)|S_1\cdots S_j(\calP^{-1})|T_j(\calP^2)\\
&\quad=
\sum_{Y',G} \chi_L(\det G)
\,\theta(L;\tau)|S_1\cdots S_j(\calP^{-1})|S_{r_0+1}\cdots S_{j-r_2}(\calP) S_{j-r_2+1}\cdots S_j(\calP^2)\\
&\qquad\qquad
|\begin{pmatrix}G^{-1}&Y'\,^tG\\&^tG\end{pmatrix}.
\end{align*}
We also know that $\theta(L;\tau)|P_{r_2}=\chi_L(\det P'_{r_2})\,\theta(L;\tau),$
and 
\begin{align*}
&\theta(L;\tau)|P_{r_2}|S_1\cdots S_{r_0}(\calP^{-1})S_{j-r_2+1}\cdots S_j(\calP)|\,^tP_{r_2}\\
&\quad
=\theta(L;\tau)|S_1\cdots S_{r_0}(\calP^{-1})S_{n-r_2+1}\cdots S_n(\calP).
\end{align*}
Also, from Theorem 3.2, we know that
\begin{align*}
&\theta(L;\tau)|S_1\cdots S_{r_0}(\calP^{-1})S_{n-r_2+1}\cdots S_n(\calP)\\
&\quad
=N(\calP)^{k(r_2-r_0)}\chi^*_L(\calP)^{r_2-r_0}\,\theta(\calL_{r_0,r_2};\tau).
\end{align*}
From this the proposition easily follows.  (Recall that $\chi_L$ is quadratic.)
\end{proof}

Our next step in analyzing $\theta(L;\tau)|S_1\cdots S_j(\calP^{-1})T_j(\calP^2)$ is to find a more convenient way to write $\theta(\calL_{r_0,r_2};\tau)$.  Toward this, we introduce some terminology.

\smallskip

\noindent{\bf Terminology.}  Fix non-negative integers $r_0,r_2$ so that $r_0+r_2\le j$.  Take
$U\in\calL_{r_0,r_2}.$  With $(y_1\, \cdots\, y_n)=(x_1\,\cdots\,x_m)U$, set
$$\Omega(U)=\Ok y_1\oplus\cdots\oplus\Ok y_n$$
(an external direct sum).  We call $\Omega(U)$ the (free) lattice associated to $U$,
and we call $(y_1\, \cdots\, y_n)$ the basis determined by $U$; note that by the definition of $\calL$ and $\calL_{r_0,r_2}$, we have 
$$y_i\in\begin{cases}\calP^{-1}L&\text{if $1\le i\le r_0$,}\\
L&\text{if $r_0<i\le n-r_2$,}\\
\calP L&\text{otherwise.}
\end{cases}
$$
 For $U,U'\in\calL_{r_0,r_2}$, we say $U$ and $U'$ are equivalent in $\calL_{r_0,r_2}$, and write $U\sim U'$,
if there is some $G\in GL_n(\Ok)$ so that $U'=UG$.  Note that when $U\sim U'$ for $U,U'\in\calL_{r_0,r_2}$,
we have $\Omega(U)=\Omega(U')$.  
\smallskip

\begin{prop}  Fix non-negative integers $r_0,r_2$ so that $r_0+r_2\le j$.
\begin{enumerate}
\item[(a)]  
Take $U\in\calL_{r_0,r_2}$.  There are invariants $d_0,d_1$ of the equivalence class of $U$ and a representative $U_{\underline y}$ for this equivalence class so that with
$$\underline y=(y_1\,\cdots\,y_n)=(x_1\,\cdots\,x_m)U_{\underline y}$$
we have
$$y_i\in\begin{cases} \calP^{-1}L\smallsetminus L&\text{if $1\le i\le d_0$,}\\
L\smallsetminus \calP L&\text{if $d_0<i\le d_0+d_1$,}\\
\calP L&\text{otherwise}
\end{cases}$$
where $y_1,\ldots,y_{d_0+d_1}$ are linearly independent in the vector space $\kfld L$.
We call such an equivalence class representative $U_{\underline y}$ a reduced representative, and we call such a basis $\underline y$ a reduced basis; to ease notation, we write $\Omega(\underline y)$ to denote $\Omega(U_{\underline y})$.
\item[(b)]  Take $U\in\calL_{r_0,r_2}$ and take $\underline y=(y_1\,\cdots\,y_n)$ to be a reduced basis for $\Omega(U)$; take $U_{\underline y}\sim U$ so that $\underline y=(x_1\,\cdots\,x_m)U_{\underline y}.$  With $d_0,d_1$ the invariants associated to the equivalence class of $U$ (as defined in (a)),
set
\begin{align*}
\Delta=\Delta(\underline y)
&=\calP y_1\oplus\cdots\oplus\calP y_{d_0}
\oplus \Ok y_{d_0+1}\oplus\cdots\oplus\Ok y_{d_0+d_1}\\
&\quad
\oplus \calP^{-1} y_{d_0+d_1+1}\oplus\cdots\oplus\calP^{-1} y_n;
\end{align*}
we call $\Delta(\underline y)$ the formal intersection of $\calP^{-1}\Omega(\underline y)$ and $L$.
Then with $\Omega=\Omega(\underline y)$, the equivalence class of $U_{\underline y}$ in $\calL_{r_0,r_2}$ is partitioned into $\K_{r_0,r_2}$-orbits of the form $U_{\underline y}G_{\underline y,\Lambda}\cdot\K_{r_0,r_2}$
where the parameter $\Lambda$ varies over all lattices so that 
$\calP\Omega\subseteq\Lambda\subseteq\Delta$ with
$\mult_{\{\Omega:\Lambda\}}(\calP)=r_0$ and 
$\mult_{\{\Omega:\Lambda\}}(\calP^{-1})=r_2$.
Further, $G_{\underline y,\Lambda}\in GL_n(\Ok)$ is chosen so tha
$U_{\underline y}G_{\underline y,\Lambda}\in\calL_{r_0,r_2}$ and with
$$(z_1\,\cdots\,z_n)=(y_1\,\cdots\,y_n)G_{\underline y,\Lambda}
=(x_1\,\cdots\,x_m)U_{\underline y}G_{\underline y,\Lambda},$$
we have
\begin{align*}
\Lambda&=\calP z_1\oplus\cdots\oplus\calP z_{r_0}
\oplus\Ok z_{r_0+1}\oplus\cdots\oplus\Ok z_{n-r_2}\\
&\qquad
\oplus\calP^{-1} z_{n-r_2+1}\oplus\cdots\oplus\calP^{-1}z_n.
\end{align*}
\item[(c)] 
Let $\underline y$ vary so that $U_{\underline y}$ varies over the equivalence class representatives in $\calL_{r_0,r_2}$, and for each $\underline y$, let $\Lambda=\Lambda(\underline y)$ vary as in (b).  Then with $G_{\underline y,\Lambda}\in GL_n(\Ok)$ as in (b) and $E$ varying over $\K_{r_0,r_2}$, we have
$$\theta(\calL_{r_0,r_2};\tau)=\sum_{\underline y, E}\sum_{\Lambda}
\e\{Q[U_{\underline y}G_{\underline y,\Lambda}E]\tau\}.$$
\end{enumerate}
\end{prop}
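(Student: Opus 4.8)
The plan is to reduce all three parts to elementary-divisor theory at $\calP$. Away from $\calP$ the lattice $\calL_{r_0,r_2}$ coincides with $\calL$, and every condition in the statement (the block membership of a basis, the inclusions $\calP\Omega\subseteq\Lambda\subseteq\Delta$, the multiplicities $\mult_{\{\Omega:\Lambda\}}(\calP^{\pm1})$) is a condition on the relative position of $\Omega(U)$ and $L$ at $\calP$ only. So I would work over the discrete valuation ring $\Ok_{\calP}$, where $\Ok_{\calP}L$ is free, and pass back to global transformations using that $GL_n(\Ok)\to GL_n(\Ok/\calP^N)$ is surjective for every $N$: since the relevant conditions are detected modulo a fixed power of $\calP$ and hold automatically at all other primes, any local change of basis in $GL_n(\Ok_{\calP})$ can be realized by a global $G\in GL_n(\Ok)$.

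For (a) I would record the coordinates of $y_1,\ldots,y_n$ relative to an $\Ok_{\calP}$-basis of $\Ok_{\calP}L$ in a matrix $Y$; as each $y_i\in\calP^{-1}L$, the matrix $\pi Y$ is integral. Smith normal form over $\Ok_{\calP}$ gives $\pi Y=P\,\diag(\pi^{a_1},\ldots)\,G'$ with $P\in GL_m(\Ok_{\calP})$, $G'\in GL_n(\Ok_{\calP})$, $a_1\le a_2\le\cdots$ (with zero columns appended if the $y_i$ are dependent). Right multiplication by $G'$ is a change of basis of the free module $\Omega$, after which $y_i\in\calP^{-1}L\smallsetminus L$, $y_i\in L\smallsetminus\calP L$, or $y_i\in\calP L$ according as $a_i=0$, $a_i=1$, or $a_i\ge2$. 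Taking $d_0=\#\{a_i=0\}$, $d_1=\#\{a_i=1\}$ and reordering columns yields the reduced basis; the pivots with $a_i\le1$ are nonzero, so $y_1,\ldots,y_{d_0+d_1}$ are linearly independent. As the $a_i$ are the elementary divisors of the pair, $d_0,d_1$ are unchanged by right multiplication by $GL_n(\Ok)$ and are thus invariants of the class. Lifting $G'$ to $GL_n(\Ok)$ as above and setting $U_{\underline y}=UG$ finishes (a).

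For (b) the key is that $\calL_{r_0,r_2}$-membership is encoded by one lattice. For $G\in GL_n(\Ok)$ with $(z)=(y)G$ write $\Lambda_G=\Omega(U_{\underline y}G\,X_{r_0,r_2})$, i.e. the formal lattice $\calP z_1\oplus\cdots\oplus\calP z_{r_0}\oplus\Ok z_{r_0+1}\oplus\cdots\oplus\Ok z_{n-r_2}\oplus\calP^{-1}z_{n-r_2+1}\oplus\cdots\oplus\calP^{-1}z_n$. One has $\calP\Omega\subseteq\Lambda_G\subseteq\calP^{-1}\Omega$ always, and the block conditions defining $U_{\underline y}G\in\calL_{r_0,r_2}$ read exactly as $\Lambda_G\subseteq L$; since $\Delta=\calP^{-1}\Omega\cap L$ (a coefficient-wise check in the reduced basis, literal when the $y_i$ are independent), this is equivalent to $\Lambda_G\subseteq\Delta$, and the shape $X_{r_0,r_2}$ forces $\mult_{\{\Omega:\Lambda_G\}}(\calP)=r_0$, $\mult_{\{\Omega:\Lambda_G\}}(\calP^{-1})=r_2$. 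The first main point is that $\Lambda_G$ depends only on $G\K_{r_0,r_2}$: for $E\in\K_{r_0,r_2}$ the definition $\K_{r_0,r_2}=X_{r_0,r_2}GL_n(\Ok)X_{r_0,r_2}^{-1}\cap GL_n(\Ok)$ gives $E\,X_{r_0,r_2}=X_{r_0,r_2}\,g$ with $g\in GL_n(\Ok)$, whence $\Lambda_{GE}=\Lambda_G$. The second is the converse: given $\Lambda$ with $\calP\Omega\subseteq\Lambda\subseteq\Delta$ and the prescribed multiplicities, the elementary-divisor theorem for the pair $(\Omega,\Lambda)$ produces a basis $(z)=(y)G_{\underline y,\Lambda}$ in which $\Lambda$ has the standard diagonal shape $\big<\calP I_{r_0},I,\calP^{-1}I_{r_2}\big>$, and any two such bases differ by an element of $\K_{r_0,r_2}$ on the right (as $\K_{r_0,r_2}$ is precisely the stabilizer of that shape). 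Together these give the asserted bijection between admissible $\Lambda$ and $\K_{r_0,r_2}$-orbits $U_{\underline y}G_{\underline y,\Lambda}\K_{r_0,r_2}$ inside the class.

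Part (c) is then bookkeeping: partition $\calL_{r_0,r_2}$ into equivalence classes, fix one reduced representative $U_{\underline y}$ per class by (a), split each class into the $\K_{r_0,r_2}$-orbits indexed by $\Lambda$ by (b), and let $E$ range over $\K_{r_0,r_2}$ within each orbit; substituting $U=U_{\underline y}G_{\underline y,\Lambda}E$ into $\theta(\calL_{r_0,r_2};\tau)=\sum_{U\in\calL_{r_0,r_2}}\e\{Q[U]\tau\}$ gives the stated triple sum. I expect the main obstacle to be the bijection in (b), namely verifying simultaneously that $\K_{r_0,r_2}$ is exactly the stabilizer of the standard shape (so that the orbits are indexed by $\Lambda$ without over- or under-counting) and that $\Lambda\subseteq\Delta$ transcribes precisely into $\calL_{r_0,r_2}$-membership with multiplicities $r_0,r_2$; the Smith-form input for (a) and the re-indexing for (c) are routine by comparison.
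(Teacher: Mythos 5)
Your overall strategy (elementary divisors at $\calP$, congruence lifting to global matrices, and the characterization of $\K_{r_0,r_2}$ as the stabilizer of the standard shape) is sound and is in essence what the paper does; but two of your key steps, as written, fail. First, your local-to-global device is the claim that $GL_n(\Ok)\to GL_n(\Ok/\calP^N)$ is surjective. This is false in general: every matrix in $GL_n(\Ok)$ has determinant a unit of $\Ok$, and $\Ok^{\times}\to(\Ok/\calP^N)^{\times}$ is usually far from surjective (already for $\Ok=\Z$ the image of $\det$ is $\{\pm1\}$). What is true, and what the paper uses throughout, is surjectivity of $SL_n(\Ok)\to SL_n(\Ok/\calP^N)$. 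Your Smith-form step can be arranged to need only this: the unit $\det G'$ can be absorbed into the left factor $P\in GL_m(\Ok_{\calP})$, leaving the right factor in $SL_n(\Ok_{\calP})$, which then lifts. The same repair is needed wherever you invoke the lifting, in particular when producing $G_{\underline y,\Lambda}$ in (b).

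Second, in (b) you translate membership $U_{\underline y}G\in\calL_{r_0,r_2}$ into the formal inclusion $\Lambda_G\subseteq\Delta$ via the identification $\Delta=\calP^{-1}\Omega\cap L$, ``literal when the $y_i$ are independent.'' That caveat is the wrong hypothesis: $\kfld$-linear independence of $y_1,\ldots,y_{d_0+d_1}$ (which is all a reduced basis is required to satisfy) does not give this equivalence. For instance (with $r_0=2$, $r_2=0$, $n\ge3$, say), take $y_2=y_1+w$ where $y_1\in\calP^{-1}L\smallsetminus L$ and $w\in L\smallsetminus\calP L$ is independent of $y_1$; then $y_1,y_2$ can be the first two vectors of a reduced basis with $d_0=2$, and a middle column of $G$ equal to $^t(1,-1,0,\ldots,0)$ gives $z=y_1-y_2=-w\in L$, so membership holds, while the formal inclusion in $\Delta$ fails since $1\notin\calP$. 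What the equivalence actually requires is the stronger property that the images of $y_1,\ldots,y_{d_0}$ are linearly independent in $\calP^{-1}L/L$ and the images of $\pi y_1,\ldots,\pi y_{d_0},y_{d_0+1},\ldots,y_{d_0+d_1}$ are linearly independent in $L/\calP L$. Your Smith-form construction in (a) does produce such a basis (the relevant columns of $P$ are part of an $\Ok_{\calP}$-basis of $\Ok_{\calP}L$), as does the paper's two-step reduction modulo $\calP$; but you must record this property as part of (a) and invoke it in (b), since without it the asserted partition is false for some ``reduced'' representatives. With these two repairs your argument goes through and is essentially the paper's: the paper proves (a) by successive reductions in $\calP^{-1}L/L$ and $L/\calP L$ lifted through $SL_n(\Ok)$, and proves (b) by approximating a local $SL_n(\Ok_{\calP})$ change of basis modulo $\calP^2$ and verifying directly that $\Lambda_E=\Lambda_G$ if and only if $E^{-1}G\in\K_{r_0,r_2}$.
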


\begin{proof}
(a)  Fix $U\in\calL_{r_0,r_2}$;  recalling that $L=\calA_1x_1\oplus\cdots\oplus\calA_mx_m$, let
$(y_1'\,\cdots\,y_n')=(x_1\,\cdots\,x_m)U$ and let $\Omega'=\Ok y_1'+\cdots+\Ok y_n'$.
We construct a representative for the equivalence class of $U$ in two steps.

{\it Step 1.}  For $x\in\calP^{-1}L$, let $\overline x=x+L$ and let $\overline{\Omega'}$ denote the image of $\Omega'$ in $\calP^{-1}L/L$ (a vector space over $\Ok/\calP$).
Thus $\{\overline y_1',\ldots,\overline y_{r_0}'\}$ spans $\overline{\Omega'}$.
Let $d_0=\dim\overline{\Omega'}$; thus $d_0$ is an invariant of the equivalence class of $U$ in 
$\calL_{r_0,r_2}$.
Take $E_0'\in SL_{r_0}(\Ok/\calP)$ so that with 
$$(\overline y_1\,\cdots\,\overline y_{d_0}\,\overline0\,\cdots\,\overline0)=(\overline y_1'\,\cdots\,\overline y_{r_0}')E_0',$$
$(\overline y_1,\ldots,\overline y_{d_0})$ is a basis for $\overline{\Omega'}$.
Now take $E_0\in SL_{r_0}(\Ok)$ so that $E_0\equiv E_0'\ (\calP)$ and set
$$(y_1\,\cdots\,y_{d_0}\,y_{d_0+1}''\,\cdots\,y_{r_0}'')=(y_1'\,\cdots\,y_{r_0}')E_0.$$

{\it Step 2.}  For $x\in L$, now let $\overline x=x+\calP L$ and let $\overline{\Omega'\cap L}$ denote the image of $\Omega'\cap L$ in $L/\calP L$.
Thus with $\pi\in\calP\smallsetminus \calP^2$ (as throughout this section),
$\{\overline{\pi y_1},\ldots,\overline{\pi y_{d_0}},\overline y_{d_0+1},\ldots,\overline y_{n-d_0-r_2}\}$ spans $\overline{\Omega'\cap L}$ with $\overline{\pi y_1},\ldots,\overline{\pi y_{d_0}}$ linearly independent.  
Let $d_0+d_1=\dim \overline{\Omega'\cap L}$; so $d_0$ and $d_1$ are invariants of 
the equivalence class of $U$ in 
$\calL_{r_0,r_2}$.
We extend $(\overline{\pi y_1},\ldots,\overline{\pi y_{d_0}})$ to an ordered basis
$$(\overline{\pi y_1},\ldots,\overline{\pi y_{d_0}},\overline y_{d_0+1},\ldots,\overline y_{d_0+d_1})$$ 
for $\overline{\Omega'\cap L}$ where $\overline y_{d_0+1},\ldots,\overline y_{d_0+d_1}$ lie in the span of 
$$\{\overline y_{d_0+1}'',\ldots,\overline y_{r_0}'',\overline y_{r_0+1}',\ldots,\overline y_{n-r_2}'\}.$$
Choose $E_1'\in SL_{n-d_0-r_2}(\Ok/\calP)$ so that
$$(\overline y_{d_0+1}\,\cdots\,\overline y_{d_0+d_1}\,\overline 0\,\cdots\,\overline0)
=(\overline y_{d_0+1}''\,\cdots\,\overline y_{r_0}''\,\overline y_{r_0+1}',\cdots,\overline y_{n-r_2}')E_1'.$$
Take $E_1\in SL_{n-d_0-r_2}(\Ok)$ so that $E_1\equiv E_1'\ (\calP)$, and set
$$E=\big<E_0,I_{n-r_0}\big>\big<I_{d_0},E_1,I_{r_2}\big>,\ 
\underline y=(y_1\,\cdots\,y_n)=(x_1\,\cdots\,x_m)UE,$$
and set $U_{\underline y}=UE.$
Hence we have 
$$y_i\in\begin{cases}\calP^{-1}L\smallsetminus L&\text{if $1\le i\le d_0$,}\\
L&\text{if $d_0<i\le d_0+d_1$,}\\
\calP L&\text{if $d_0+d_1<i\le n$.}
\end{cases}$$
We claim that $y_1,\ldots,y_{d_0+d_1}$ are linearly independent.  To see this,
 suppose that $u_1y_1+\cdots+u_{d_0+d_1}y_{d_0+d_1}=0,$ $u_i\in\kfld$, not all 0.
 Thus this equality holds over $\kfld_{\calP}$, and so multiplying by a suitable power of $\pi$,
 we can assume that all the $u_i\in\Ok_{\calP}$ with at least one $u_i$ a unit in $\Ok_{\calP}$.
 Thus in $\calP^{-1}L/L$, we have 
 $\overline{u_1y_1}+\cdots+\overline{u_{d_0}y_{d_0}}=\overline0,$
 and hence $u_1,\ldots,u_{d_0}\in\calP$ since $\overline y_1,\ldots,\overline y_{d_0}$ are
 linearly independent in $\calP^{-1}L/L$.  Thus we rewrite $u_i$ as $\pi u_i'$ for $1\le i\le d_0$.
 So in $L/\calP L$, we have
 $\overline u_1'=\cdots=\overline u_{d_0}'=\overline u_{d_0+1}=\cdots
 =\overline u_{d_0+d_1}=\overline 0$ by the choice of basis for $\overline{\Omega'\cap L}$.
 But this shows that all the $u_i$ lie in $\calP$, a contradiction.

(b) First suppose $\Lambda$ is a lattice with $\calP\Omega\subseteq\Lambda\subseteq\Delta$ and $\mult_{\{\Omega:\Lambda\}}(\calP)=r_0$,
$\mult_{\{\Omega:\Lambda\}}(\calP^{-1})=r_2$.
Choose $G'\in SL_n(\Ok_{\calP})$ so that with $(z_1'\,\cdots\,z_n')=(y_1\,\cdots\,y_n)G',$ we have
\begin{align*}
\Ok_{\calP}\Lambda
&=\calP\Ok_{\calP}z_1'\oplus\cdots\oplus\calP\Ok_{\calP}z_{r_0}'
\oplus\Ok_{\calP}z_{r_0+1}'\oplus\cdots\oplus\Ok_{\calP}z_{n-r_2}'\\
&\quad \oplus\calP^{-1}\Ok_{\calP}z_{n-r_2+1}'\oplus\cdots
\oplus\calP^{-1}\Ok_{\calP}z_n'.
\end{align*}
Now choose $G\in SL_n(\Ok)$ so that $G\equiv G'\ (\calP^2\Ok_{\calP})$.
Set $(z_1\,\cdots\,z_n)=(y_1\,\cdots\,y_n)G$.  Thus
$\Omega=\Ok z_1\oplus\cdots\oplus\Ok z_n$; set
\begin{align*}
\Lambda'
&=\calP z_1\oplus\cdots\oplus\calP z_{r_0}
\oplus\Ok z_{r_0+1}\oplus\cdots\oplus\Ok z_{n-r_2}\\
&\quad \oplus\calP^{-1}z_{n-r_2+1}\oplus\cdots\oplus
\calP^{-1}z_n.
\end{align*}
An easy check shows that $\Ok_{\calP}\Lambda'=\Ok_{\calP}\Lambda$;
for any prime $\calP'\not=\calP$, we have $\Ok_{\calP'}\Lambda'=\Ok_{\calP'}\Omega=\Ok_{\calP'}\Lambda$.  Thus $\Lambda'=\Lambda$.  Also, since $\Lambda\subseteq\Delta$,
one easily checks that $U_{\underline y}G\in\calL_{r_0,r_2}.$  Thus $\Lambda$ corresponds to an element in the equivalence class of $U_{\underline y}$.

Now take $E,G\in GL_n(\Ok)$ so that $U_{\underline y}E, U_{\underline y}G\in\calL_{r_0,r_2}$.  Set 
$$(w_1\,\cdots\,w_n)=(y_1\,\cdots\,y_n)E,\ 
(z_1\,\cdots\,z_n)=(y_1\,\cdots\,y_n)G,$$
\begin{align*}
\Lambda_E
&=\calP w_1\oplus\cdots\oplus\calP w_{r_0}
\oplus\Ok w_{r_0+1}\oplus\cdots\oplus\Ok w_{n-r_2}\\
&\quad \oplus\calP^{-1}w_{n-r_2+1}\oplus\cdots\oplus
\calP^{-1}w_n,\\
\Lambda_G
&=\calP z_1\oplus\cdots\oplus\calP z_{r_0}
\oplus\Ok z_{r_0+1}\oplus\cdots\oplus\Ok z_{n-r_2}\\
&\quad \oplus\calP^{-1}z_{n-r_2+1}\oplus\cdots\oplus
\calP^{-1}z_n.
\end{align*}
Using that $(z_1\,\cdots\,z_n)=(w_1\,\cdots\,w_n)E^{-1}G,$
one easily checks that $\Lambda_E=\Lambda_G$ if and only if $E^{-1}G\in\K_{r_0,r_2}.$  This proves (b).

(c) This claim follows easily from (a) and (b).
\end{proof}

We are almost ready to more precisely describe
$\theta(L;\tau)|S_1\cdots S_j(\calP^{-1}) T_j(\calP^2)$, in anology with Proposition 1.4 of [theta I].
But first, we need to define some more notation.

\smallskip
\noindent{\bf Definitions.}  
Fix non-negative integers $r_0,r_2$ so that $r_0+r_2\le j$.
Fix a reduced representative $U\in\calL_{r_0,r_2}$; set $\Omega=\Omega(U)$ and
$\underline y=(x_1\,\cdots\,x_m)U$ (so $\underline y$ is a reduced basis for $\Omega$).
\begin{enumerate}
\item[(a)]  We define an exponential sum associated to $\Omega$ by
$$\e\{\Omega,\tau\}=\sum_{C}\e\{Q[UC]\tau\}$$
where $C$ varies over $GL_n(\Ok).$
\item[(b)]  We say that $\Omega$ is even $\n$-integral if, for every $z,z'\in\Omega$, we have
$\q(z)\in2\n$ and $B_{\q}(z,z')\in\n$.  Note that when $\Omega\in\calL_{r_0,r_2}$ with
$\Omega\sim\Omega'$, $\Omega'$ is even $\n$-integral if and only if $\Omega$ is even $\n$-integral.
\item[(c)]  Suppose $\Omega$ is even $\n$-integral,
$\Delta$ is the formal intersection of $\Omega$ and $L$ (as defined in Proposition 4.2),
 and $\Lambda$ is a lattice so that
$\calP\Omega\subseteq\Lambda\subseteq\Delta$ with
$\mult_{\{\Omega:\Lambda\}}(\calP)=r_0$, $\mult_{\{\Omega:\Lambda\}}(\calP^{-1})=r_2$.
Take $G_{\underline y,\Lambda}$ as defined in Proposition 4.2.  Set $r_1=j-r_0-r_2$.
For each lattice $\Lambda_1\subseteq\Lambda$ so that $\Lambda_1+\calP(\Omega+\Lambda)$ has
dimension $r_1$ in $(\Omega\cap\Lambda)/\calP(\Omega+\Lambda)$, fix $U_{\Lambda_1}\in \Ok^{m,r_1}$
so that
$$\Lambda_1+\calP(\Omega+\Lambda)=(\Ok v_1\oplus\cdots\oplus\Ok v_{r_1})+\calP(\Omega+\Lambda)$$
where $(v_1\,\cdots\,v_{r_1})=(x_1\,\cdots\,x_m)U_{\Lambda_1}.$
Set
$$\balpha'_j(\Omega,\Lambda)=\sum_{\Lambda_1,W'}\e\{Q[U_{\Lambda_1}]W'\}$$
where $\pi\eta\delta W'$ varies over $(\Ok^{r_1,r_1}_{\sym}/\calP \Ok^{r_1,r_1}_{\sym})^{\times}.$
Note that when $r_0+r_2=j$, $\balpha'_j(\Omega,\Lambda)=1$, and 
when $r_0+r_2>j$, $\balpha'_j(\Omega,\Lambda)=0.$
\end{enumerate}

\smallskip

\begin{prop} 
Fix non-negative integers $r_0, r_2$ so that $r_0+r_2\le j$.  Then with $Y', G$ varying as in Proposition 4.1,
we have
\begin{align*}
&\sum_{Y',G} \chi_L(\det G)
\theta(\calL_{r_0,r_2};\tau)|\begin{pmatrix}G^{-1}&Y'\,^tG\\&^tG\end{pmatrix}\\
&\quad=
N(\calP)^{r_0(n-r_2+1)}\sum_{\underline y, \Lambda}
\balpha'_j(\Omega(\underline y),\Lambda)\, \e\{\Omega(\underline y), \tau\}
\end{align*}
where $\underline y$ varies so that
$U_{\underline y}$ varies over a representatives for the
equivalence classes in $\calL_{r_0,r_2}$ where $\Omega(\underline y)$ is even $\n $-integral; $\Lambda$ varies over all lattices so that $\calP\Omega(\underline y)\subseteq \Lambda\subseteq\Delta(\underline y)$ with 
$\mult_{\{\Omega(\underline y),\Lambda\}}(\calP)=r_0$ and
$\mult_{\{\Omega(\underline y),\Lambda\}}(\calP^{-1})=r_2.$
(Here $\Delta(\underline y)$ is the formal intersection of $\calP^{-1}\Omega(\underline y)$ and $L$, as defined in Proposition 4.2.)
Further, with $P_{r_2}$ as in Proposition 4.1, we have
\begin{align*}
&\chi_L(\det P'_{r_2})\sum_{Y',G} \chi_L(\det G)
\theta(\calL_{r_0,r_2};\tau)|\begin{pmatrix}G^{-1}&Y'\,^tG\\&^tG\end{pmatrix}|P_{r_2}\\
&\quad=
\sum_{Y',G} \chi_L(\det G)
\theta(\calL_{r_0,r_2};\tau)|\begin{pmatrix}G^{-1}&Y'\,^tG\\&^tG\end{pmatrix}.
\end{align*}
\end{prop}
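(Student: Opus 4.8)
The plan is to establish the first (main) identity by making the slash action of the upper-triangular matrices completely explicit, then summing over the four parameter families $W$, $Y$, $G_0$, $G_1'$ of Proposition 4.1 in turn and matching the result against the three ingredients $N(\calP)^{r_0(n-r_2+1)}$, $\balpha'_j(\Omega,\Lambda)$ and $\e\{\Omega,\tau\}$ on the right; the second identity will then drop out of the first.

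First I would compute the slash directly. Since $\begin{pmatrix}G^{-1}&Y'\,^tG\\&^tG\end{pmatrix}$ has zero lower-left block, it sends $\tau$ to $G^{-1}\tau\,^tG^{-1}+Y'$ with automorphy factor $\det(N(\,^tG))^{-k}=N(\det G)^{-k}$. Expanding $\theta(\calL_{r_0,r_2};\tau)=\sum_{U\in\calL_{r_0,r_2}}\e\{Q[U]\tau\}$ and using cyclicity of the matrix trace together with $Q[UG^{-1}]=\,^tG^{-1}Q[U]G^{-1}$, each summand becomes $\e\{Q[UG^{-1}]\tau\}\,\e\{Q[U]Y'\}$. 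As $G\in GL_n(\Ok)$, the determinant $\det G$ is a unit, so $N(\det G)=\pm1$ and $\chi^*_L(\det G\,\Ok)=1$; hence $\chi_L(\det G)=N(\det G)^{k}$ cancels the factor $N(\det G)^{-k}$ exactly. Writing $G=G_0G_1$ and $Y'=W+Y$ as in Proposition 4.1, the left-hand side becomes $\sum_{W,Y,G_0,G_1'}\sum_{U\in\calL_{r_0,r_2}}\e\{Q[UG^{-1}]\tau\}\,\e\{Q[U](W+Y)\}$.

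Next I would carry out the parameter sums in the order $Y$, then $(W',G_1')$, then $(G_0,E)$. Summing $\e\{Q[U]Y\}$ over $Y_0$ (a full set of residues modulo $\calP^2$ on the $r_0\times r_0$ symmetric block) and over $Y_2$ (residues modulo $\calP$ on the $r_0\times(n-r_0-r_2)$ off-diagonal block) yields complete character sums; these vanish unless every $\q(y_i)$ and $B_{\q}(y_i,y_j)$ that involves one of the vectors $y_1,\dots,y_{r_0}\in\calP^{-1}L$ lies in $2\n$ resp. $\n$, which (since all remaining entries of $Q[U]$ are automatically even $\n$-integral) is exactly the condition that $\Omega(U)$ be even $\n$-integral. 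When they do not vanish they contribute $N(\calP)^{r_0(r_0+1)}\cdot N(\calP)^{r_0(n-r_0-r_2)}=N(\calP)^{r_0(n-r_2+1)}$, producing both the stated constant and the restriction of $\underline y$ to reduced representatives with $\Omega(\underline y)$ even $\n$-integral. I would then reorganize the surviving $U$-sum by Proposition 4.2, partitioning $\calL_{r_0,r_2}$ into equivalence classes (indexed by reduced bases $\underline y$) and $\K_{r_0,r_2}$-orbits (indexed by the lattices $\Lambda$ and by $E\in\K_{r_0,r_2}$). The residual $W'$-sum over invertible symmetric matrices modulo $\calP$, paired with the coset sum $G_1'\in GL_{n-r_0-r_2}/\K'_{r_1}$, reproduces $\balpha'_j(\Omega(\underline y),\Lambda)=\sum_{\Lambda_1,W'}\e\{Q[U_{\Lambda_1}]W'\}$ once the cosets are identified with the admissible $r_1$-dimensional lattices $\Lambda_1$; and the coset sum $G_0\in GL_n(\Ok)/\,^t\K_{r_0,r_2}$ together with the $\K_{r_0,r_2}$-orbit sum collapses $\e\{Q[UG^{-1}]\tau\}$ into $\e\{\Omega(\underline y),\tau\}=\sum_{C\in GL_n(\Ok)}\e\{Q[U_{\underline y}C]\tau\}$. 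Collecting the pieces gives the first identity. The main obstacle is precisely this bookkeeping: checking that the interlocking sums over $\underline y$, $\Lambda$, $G_1'$, $W'$ and $(G_0,E)$ correspond, with the correct multiplicities, to the lattice data $(\Omega,\Lambda,\Lambda_1)$ entering $\balpha'_j$ and to the free $GL_n(\Ok)$-sum defining $\e\{\Omega,\tau\}$, and that the $Y$-sum vanishing condition is indeed equivalent to even $\n$-integrality of $\Omega$.

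For the second identity I would apply $|P_{r_2}$ to the first identity by linearity, reducing the claim to $\chi_L(\det P'_{r_2})\,\e\{\Omega,\tau\}|P_{r_2}=\e\{\Omega,\tau\}$ for each $\Omega$. Since $P_{r_2}=\big<P'_{r_2},P'_{r_2}\big>$ has zero off-diagonal blocks and $P'_{r_2}$ is a permutation matrix with $\,^tP'_{r_2}=(P'_{r_2})^{-1}$, the slash sends $\tau\mapsto P'_{r_2}\tau(P'_{r_2})^{-1}$ with automorphy factor $(\det P'_{r_2})^{-dk}$, and trace-cyclicity gives $\e\{Q[U_{\underline y}C]P'_{r_2}\tau(P'_{r_2})^{-1}\}=\e\{Q[U_{\underline y}CP'_{r_2}]\tau\}$. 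The substitution $C\mapsto CP'_{r_2}$ is a bijection of $GL_n(\Ok)$, so $\e\{\Omega,\tau\}|P_{r_2}=(\det P'_{r_2})^{-dk}\e\{\Omega,\tau\}$; as $\det P'_{r_2}=\pm1$ lies in $\Q$ one computes $\chi_L(\det P'_{r_2})=(\det P'_{r_2})^{dk}$, whence the two factors cancel and the second identity follows.
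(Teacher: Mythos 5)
Your overall strategy is the paper's own: expand the slash action of $\begin{pmatrix}G^{-1}&Y'\,^tG\\&^tG\end{pmatrix}$, cancel $\chi_L(\det G)$ against the automorphy factor $N(\det G)^{-k}$, let the $Y$-sum test even $\n$-integrality of $\Omega(U)$ (your count $N(\calP)^{r_0(r_0+1)}\cdot N(\calP)^{r_0(n-r_0-r_2)}=N(\calP)^{r_0(n-r_2+1)}$ is correct, as is the observation that the untested Gram entries are automatically integral), decompose $U=U_{\underline y}G_{\underline y,\Lambda}E$ via Proposition 4.2, and prove the second identity by the substitution $C\mapsto CP'_{r_2}$ together with the cancellation of $\chi_L(\det P'_{r_2})$ against the automorphy factor. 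All of those steps are sound and agree with the paper.

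The gap is exactly in the step you defer as ``bookkeeping'': the factorization you assert does not hold in the raw expansion. Writing $U'=U_{\underline y}G_{\underline y,\Lambda}$, each term of your expanded sum is $\e\{Q[U'E(G_0G_1)^{-1}]\tau\}\,\e\{Q[U'E]W\}\,\e\{Q[U'E]Y\}$; thus $W$ pairs with $Q[U'E]$, i.e.\ with $E$, while $G_1$ occurs only inside the $\tau$-factor. The sums over $(E,G_0)$ and over $(G_1',W')$ are therefore entangled through $E$: you cannot simultaneously let $(E,G_0)$ collapse the $\tau$-factor to $\e\{\Omega(\underline y),\tau\}$ and let $(G_1',W')$ produce $\balpha'_j$, because $\e\{Q[U'E]W\}$ genuinely depends on $E$ (already a block-diagonal $E=\big<I_{r_0},E',I_{r_2}\big>\in\K_{r_0,r_2}$ changes the middle $r_1\times r_1$ Gram block that $W'$ sees). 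The paper resolves this with a reindexing your plan lacks: since $G_1\in\K_{r_0,r_2}$, replace $E$ by $EG_1$ (a bijection of $\K_{r_0,r_2}$ for each fixed $G_1$), so that $EG_1(G_0G_1)^{-1}=EG_0^{-1}$ frees the $\tau$-factor of $G_1$ and the $W$-factor becomes $\e\{Q[U'EG_1]W\}$, now pairing $W$ with $G_1$. One then needs a second observation: the inner sum $\sum_{G_1,W}\e\{Q[U'EG_1]W\}$ is independent of $E\in\K_{r_0,r_2}$ and equals $\balpha'_j(\Omega(\underline y),\Lambda)$, because $E$ preserves the pair $(\Omega(\underline y),\Lambda)$ and hence merely permutes the lattices $\Lambda_1$ indexed by the cosets $G_1'\K'_{r_1}$. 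Only after these two moves does the $(E,G_0)$-sum factor out and collapse to $\e\{\Omega(\underline y),\tau\}$. These are the missing ideas, not routine bookkeeping; without them the interlocking sums do not separate.
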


\begin{proof}  We let 
$Y'=W+Y$
and $G=G_0G_1$ vary as described in Proposition 4.1.
We let 
$U_{\underline y}$ vary over a set of representatives for the equivalence classes in $\calL_{r_0,r_2}$, 
$E$ over $\K_{r_0,r_2}$; we let $\Lambda$, $G_{\underline y,\Lambda}$ vary as in Proposition 4.2.
We have $G_1\in\K_{r_0,r_2}$, so summing first on $G_0, G_1$ and then on $Y, W, \underline y, \Lambda, E$, we can replace $E$ by $EG_1$ and $Y$ by $G_1Y\,^tG_1$.  Then we can rearrange the order of summation to get
\begin{align*}
&\sum_{Y',G} \chi_L(\det G)
\theta(\calL_{r_0,r_2};\tau)|\begin{pmatrix}G^{-1}&Y'\,^tG\\&^tG\end{pmatrix}\\
&\quad=\sum_{\underline y,\Lambda}
\sum_{E,G_0}\e\{Q[U_{\underline y}G_{\underline y,\Lambda}EG_0^{-1}]\tau\}\\
&\qquad\cdot
\sum_{G_1,W}\e\{Q[U_{\underline y}G_{\underline y,\Lambda}EG_1]W\}
\sum_Y\e\{Q[U_{\underline y}G_{\underline y,\Lambda}EG_1]Y\}.
\end{align*}
The sum on $Y$ tests whether 
$\Omega(\underline y)$ is even $\n$-integral, returning 
$N(\calP)^{r_0(n-r_2+1)}$ if the answer is yes, and returning 0 otherwise.
So now suppose $\Omega(\underline y)$ is even $\n$-integral; fix
the parameter $\Lambda$ and consider
the sum on $G_1,W$.  As $E\in\K_{r_0,r_2}$,
with 
$$(z_1\,\cdots\,z_n)=(x_1\,\cdots\,x_m)U_{\underline y}G_{\underline y,\Lambda}E,$$
we have $\Omega(\underline y)=\Ok z_1\oplus\cdots\oplus\Ok z_n$ and
$$\Lambda=\calP z_1\oplus\cdots\oplus\calP z_{r_0}\oplus \Ok z_{r_0+1}\oplus\cdots\oplus
\Ok z_{n-r_2}\oplus\calP^{-1} z_{n-r_2+1}\oplus\cdots\oplus\calP^{-1} z_n.$$
Thus the sum on $G_1,W$ is $\balpha'_j(\Omega(\underline y),\Lambda).$
For fixed $\underline y,\Lambda$, we have 
$$\Omega(\underline y)=\Omega(U_{\underline y})=\Omega(U_{\underline y}G_{\underline y,\Lambda})$$
and as $E, G_0$ vary, $EG_0^{-1}$ varies over $GL_n(\Ok)$; thus the sum on $E,G_0$
is $\e\{\Omega(\underline y),\tau\}$.

To prove the final claim, we simply note that $\chi_L(\det P'_{r_2}) (\det P'_{r_2})^k=1$,
$\e\{Q[U_{\underline y}C]P'_{r_2}\tau\,^tP'_{r_2}\}=\e\{Q[U_{\underline y}CP'_{r_2}]\tau\},$
and $CP'_{r_2}$ varies over $GL_n(\Ok)$ as $C$ does.
\end{proof}

Next, we combine Propositions 4.1 and 4.3.

\begin{thm}  
We have
\begin{align*}
&\theta(L;\tau)|S_1\cdots S_j(\calP^{-1})T_j(\calP^2)\\
&\quad=
\sum_{\Omega,\Lambda}N(\calP)^{E'_j(\Omega,\Lambda)}\chi_L^*(\calP)^{e'_j(\Omega,\Lambda)}
\balpha'_j(\Omega,\Lambda)\e\{\Omega,\tau\}
\end{align*}
where $\Omega$ varies over all sublattices of $\calP^{-1}L$ with formal rank $n$, $\Lambda$ varies over all lattices so that $\calP\Omega\subseteq\Lambda\subseteq\Delta$ ($\Delta$ the formal intersection of $\calP^{-1}\Omega$ and $L$), and with $r_0=\mult_{\{\Omega:\Lambda\}}(\calP)$,
$r_2=\mult_{\{\Omega:\Lambda\}}(\calP^{-1})$, we have
$$E_j'(\Omega,\Lambda)=k(r_2-r_0-j)+r_0(n-r_2+1),\ e_j'(\Omega,\Lambda)=r_2-r_0-j.$$
\end{thm}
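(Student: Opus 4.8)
The plan is to combine Propositions 4.1 and 4.3 and then to re-index the resulting sum so that it runs over pairs $(\Omega,\Lambda)$ rather than over $(r_0,r_2)$ together with reduced bases. By Proposition 4.1, $\theta(L;\tau)|S_1\cdots S_j(\calP^{-1})T_j(\calP^2)$ is the sum over admissible $(r_0,r_2)$ of the pieces $\theta(L;\tau)|S_1\cdots S_j(\calP^{-1})T_{j;r_0,r_2}(\calP^2)$, each written in terms of $\theta(\calL_{r_0,r_2};\tau)$ acted on by the matrices $\begin{pmatrix}G^{-1}&Y'\,^tG\\&^tG\end{pmatrix}$ and by $P_{r_2}$. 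First I would feed Proposition 4.3 into each piece: its second identity removes the trailing $|P_{r_2}$ together with the prefactor $\chi_L(\det P'_{r_2})$, and its first identity rewrites the sum over $Y',G$ as $N(\calP)^{r_0(n-r_2+1)}\sum_{\underline y,\Lambda}\balpha'_j(\Omega(\underline y),\Lambda)\,\e\{\Omega(\underline y),\tau\}$, where $\underline y$ runs over the even $\n$-integral equivalence classes in $\calL_{r_0,r_2}$ and $\Lambda$ over the lattices with $\calP\Omega(\underline y)\subseteq\Lambda\subseteq\Delta(\underline y)$ and multiplicities $\mult_{\{\Omega:\Lambda\}}(\calP)=r_0$, $\mult_{\{\Omega:\Lambda\}}(\calP^{-1})=r_2$.

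Next I would collect, for each $(r_0,r_2)$-piece, the powers of $N(\calP)$ and of $\chi_L^*(\calP)$. Using that $\chi_L$ is quadratic, there are three sources: the normalization $N(\calP)^{-kj}$, $\chi_L^*(\calP)^{-j}$ appearing when one solves Proposition 4.1 for a single $(r_0,r_2)$-piece; the transition factor $N(\calP)^{k(r_2-r_0)}$, $\chi_L^*(\calP)^{r_2-r_0}$ relating $\theta(L;\tau)$ and $\theta(\calL_{r_0,r_2};\tau)$ through Theorem 3.2; and the factor $N(\calP)^{r_0(n-r_2+1)}$ from Proposition 4.3. Summing the exponents of $N(\calP)$ gives $k(r_2-r_0)-kj+r_0(n-r_2+1)=E_j'(\Omega,\Lambda)$, and summing those of $\chi_L^*(\calP)$ gives $(r_2-r_0)-j=e_j'(\Omega,\Lambda)$, exactly the exponents claimed, with $r_0,r_2$ the multiplicities read off from $(\Omega(\underline y),\Lambda)$.

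The remaining and, I expect, hardest step is to re-index the triple sum $\sum_{r_0,r_2}\sum_{\underline y}\sum_{\Lambda}$ as the single sum $\sum_{\Omega,\Lambda}$ of the statement. The key is a bijection. For fixed $(r_0,r_2)$, Proposition 4.2(a) shows that the equivalence class of a reduced representative in $\calL_{r_0,r_2}$ is determined by its associated lattice $\Omega=\Omega(\underline y)$, and $\Omega(U)=\Omega(UG)$ for $G\in GL_n(\Ok)$; hence distinct classes give distinct sublattices $\Omega\subseteq\calP^{-1}L$ of formal rank $n$. Conversely, Proposition 4.2(b) builds, from any lattice $\Lambda$ with $\calP\Omega\subseteq\Lambda\subseteq\Delta$ having multiplicities $(r_0,r_2)$, a representative $U_{\underline y}G_{\underline y,\Lambda}$ of the class of $\Omega$ inside $\calL_{r_0,r_2}$. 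Thus $\Omega$ occurs in $\calL_{r_0,r_2}$ precisely when it admits such a $\Lambda$, and each valid pair $(\Omega,\Lambda)$ arises from exactly one triple, with $(r_0,r_2)$ recovered as the multiplicities of $\Lambda$ in $\{\Omega:\Lambda\}$. I would verify this correspondence locally at $\calP$ — everything away from $\calP$ being unaffected — using the formal-intersection description of $\Delta$ and the multiplicity bookkeeping of Proposition 4.2(b). Finally, since $\balpha'_j(\Omega,\Lambda)$ is supported on even $\n$-integral $\Omega$, enlarging the outer sum to all sublattices $\Omega\subseteq\calP^{-1}L$ of formal rank $n$ adds only vanishing terms, which yields the stated formula.
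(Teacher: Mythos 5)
Your proposal follows essentially the same route as the paper's own proof: decompose via Proposition 4.1, evaluate each $(r_0,r_2)$-piece via Proposition 4.3, and re-index the resulting triple sum as a sum over pairs $(\Omega,\Lambda)$ via Proposition 4.2. The paper's proof consists of exactly the two observations underlying your bijection: for any pair $(\Omega,\Lambda)$ with multiplicities $(r_0,r_2)$ one has $r_0\ge d_0$ and $r_2\le n-d_0-d_1$, so the class of $\Omega$ does occur in $\calL_{r_0,r_2}$; and $\balpha'_j(\Omega,\Lambda)=0$ when $r_0+r_2>j$, so the constraint $r_0+r_2\le j$ in Proposition 4.1 costs nothing.

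Two points need tightening. First, the exponent bookkeeping: you count the Theorem 3.2 transition factor $N(\calP)^{k(r_2-r_0)}\chi_L^*(\calP)^{r_2-r_0}$ as a source separate from the prefactor $N(\calP)^{-kj}\chi_L^*(\calP)^{-j}$ coming from Proposition 4.1. But Proposition 4.1, as printed, already has its right-hand side expressed in terms of $\theta(\calL_{r_0,r_2};\tau)$ — Theorem 3.2 is spent inside its proof — so a literal combination of Propositions 4.1 and 4.3 yields only $N(\calP)^{-kj+r_0(n-r_2+1)}\chi_L^*(\calP)^{-j}$, and there is nothing left for Theorem 3.2 to convert; as written, your step double-counts. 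That said, your total does match the stated $E_j'$ and $e_j'$ (and Theorem 4.5 downstream needs the transition factor), which shows the prefactor printed in Proposition 4.1 should really be the $(r_0,r_2)$-dependent quantity $N(\calP)^{k(j+r_0-r_2)}\chi_L^*(\calP)^{j+r_0-r_2}$; so your accounting is the intended one, but to justify it you must unwind the proof of Proposition 4.1 (where Theorem 3.2 enters once) rather than cite its statement and Theorem 3.2 separately. Second, your closing remark enlarges the sum only over non-even-$\n$-integral $\Omega$; the theorem's sum also includes pairs $(\Omega,\Lambda)$ with $r_0+r_2>j$, which arise from no piece of Proposition 4.1, and you need the vanishing $\balpha'_j(\Omega,\Lambda)=0$ for $r_0+r_2>j$ (noted in the paper's definition of $\balpha'_j$ and invoked explicitly in its proof) to include them harmlessly.
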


\begin{proof}  
Suppose that $\Omega$ is a sublattice of $\calP^{-1}L$ with formal rank $n$, and take $y_1,\ldots,y_n\in\calP^{-1}L$ so that $\Omega=\Ok y_1\oplus\cdots\oplus\Ok y_n$.  By Proposition 4.2,
we can assume that for some $d_0,d_1$, we have $y_1,\ldots,y_{d_0}\in\calP^{-1}L\smallsetminus L$,
$y_{d_0+1},\ldots,y_{d_0+d_1}\in L\smallsetminus\calP L$, $y_{d_0+d_1+1},\ldots,y_n\in\calP L$
with $y_1,\ldots,y_{d_0+d_1}$ linearly independent in $\kfld L$.  Let $\Delta$ be the formal intersection of $\calP^{-1}\Omega$ and $L$, and suppose $\Lambda$ is a lattice with $\calP\Omega\subseteq\Lambda\subseteq\Delta.$  Then with $r_0=\mult_{\{\Omega:\Lambda\}}(\calP)$,
$r_2=\mult_{\{\Omega:\Lambda\}}(\calP^{-1})$, we have $r_0\ge d_0$ and $r_2\le n-d_0-d_1$; hence
with $U_{\underline y}$ the matrix so that $(y_1\,\cdots\, y_n)=(x_1\,\cdots\,x_m)U_{\underline y}$,
we have $U_{\underline y}\in\calL_{r_0,r_2}.$  If $r_0+r_2>j$ then $\balpha_j'(\Omega,\Lambda)=0$ as there are no dimension $j-r_0-r_2$ subspaces of $(\Omega\cap\Lambda)/\calP(\Omega+\Lambda).$
The theorem now follows from Propositions 4.1 and 4.3.
\end{proof}

To complete the character sums in our description of the action of $T_j(\calP^2)$, we have the following.

\smallskip\noindent
{\bf Definitions.}  With $\calP$  a prime ideal, set
$$\widetilde T_j(\calP^2)=N(\calP)^{j(k-n-1)}
\sum_{\ell=0}^j \chi_L^*(\calP)^{\ell}N(\calP)^{k\ell} \bbeta(n-\ell,j-\ell) 
S_1\cdots S_{\ell}(\calP^{-1})
T_{\ell}(\calP^2)$$
where $T_0(\calP^2)$ is the identity map and
$$\bbeta(r,a)=\bbeta_{\calP}(r,a)=\prod_{i=0}^{a-1}\frac{(N(\calP)^{r-i}-1)}{(N(\calP)^{a-i}-1)}.$$
(So $\bbeta(r,a)$ is the number of dimension $a$ subpaces of a dimension $r$ space over 
$\Ok/\calP$.)  To ease notation, we set $\F=\Ok/\calP$.  Set  
$$\boldeta(r,a)=\boldeta_{\calP}(r,a)=\prod_{i=a}^{r-1}(N(\calP)^{r-i}-N(\calP)^{a-i});$$
so $\boldeta(r,0)=|GL_r(\F)|$ and for $1\le a\le r$ and $C\in\F^{r,a}$ with $\rank C=a$, $\boldeta(r,a)$ is the number of ways to extend $C$ to an element of $GL_r(\F)$.

With $\Omega$ a subspace of $\calP^{-1}L$ with formal rank $n$, and
$\Lambda$ a lattice so that
$\calP\Omega\subseteq\Lambda\subseteq\Delta$ (where $\Delta$ is the formal intersection of $\calP^{-1}\Omega$ and $L$), let $r_0=\mult_{\{\Omega:\Lambda\}}(\calP)$, 
$r_2=\mult_{\{\Omega:\Lambda\}}(\calP^{-1})$, $r=r_0+r_2$.
With $\Omega_1=(\Omega\cap\Lambda)/\calP(\Omega+\Lambda)$, we consider $\Omega_1$ as a quadratic space over $\F$, with symmetric bilinear form $B'=\eta'B_{\q}$ (modulo $\calP$); when $\calP\nmid2$, we take the quadratic form on $\Omega_1$ to be $\q'=\eta'\q$ (modulo $\calP$), and when $\calP|2$, we take $\q'=\frac{1}{2}\eta'\q$ (and then we have the relation $q'(x+y)=\q'(x)+\q'(y)+B'(x,y)$).  With $\Lambda_1$ a subspace of $\Omega_1$, we say $\Lambda_1$ is totally isotropic if $\q'(x)=0\in\F$ for every $x\in\Lambda_1$.

\smallskip

This gives us the analogue of the first step in the proof of Theorem 2.1 \cite{theta II}.

\begin{thm}
Suppose that $\calP$ is a prime ideal with $\calP\nmid \stufe$.
We have
\begin{align*}
&\theta(L;\tau)|\widetilde T_j(\calP^2)\\
&\quad =
\sum_{\Omega}
\left(\sum_{\calP\Omega\subseteq\Lambda\subseteq\Delta}\chi^*(\calP)^{e_j(\Omega,\Lambda)}N(\calP)^{E_j(\Omega,\Lambda)}\balpha_j(\Omega,\Lambda)\right)\e\{\Omega,\tau\}
\end{align*}
where the sum is over all even $\n$-integral sublattices
$\Omega$ of $\calP^{-1}L$ with formal rank $n$, and $\Delta$ is the formal intersection of $\calP^{-1}\Omega$ and $L$.
Also, with $r_0=\mult_{\{\Omega:\Lambda\}}(\calP)$ and
$r_2=\mult_{\{\Omega:\Lambda\}}(\calP^{-1})$, we take
$e_j(\Omega,\Lambda)=j+r_2-r_0$ and
$E_j(\Omega,\Lambda)=k(j+r_2-r_0)+r_0(n-r_2+1)+(j-r_0-r_2)(j-r_0-r_2+1)/2-j(n+1),$
and $\balpha_j(\Omega,\Lambda)$ is the number of totally isotropic, codimension $n-j$ subspaces of
$(\Omega\cap\Lambda)/\calP(\Omega+\Lambda)$ which has the quadratic form $\q'$ as defined above.
\end{thm}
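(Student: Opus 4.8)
The plan is to substitute the definition of $\widetilde T_j(\calP^2)$ and apply Theorem 4.4 to each summand $S_1\cdots S_\ell(\calP^{-1})T_\ell(\calP^2)$, then collect, for each pair $(\Omega,\Lambda)$, the total coefficient of $\e\{\Omega,\tau\}$. First I would fix an even $\n$-integral sublattice $\Omega$ of $\calP^{-1}L$ of formal rank $n$ and a lattice $\Lambda$ with $\calP\Omega\subseteq\Lambda\subseteq\Delta$, and set $r_0=\mult_{\{\Omega:\Lambda\}}(\calP)$, $r_2=\mult_{\{\Omega:\Lambda\}}(\calP^{-1})$, $r_1=j-r_0-r_2$. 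By Theorem 4.4 the $\ell$-th operator contributes to this coefficient only when $\ell\ge r_0+r_2$, with weight $N(\calP)^{E_\ell'(\Omega,\Lambda)}\chi^*(\calP)^{e_\ell'(\Omega,\Lambda)}\balpha'_\ell(\Omega,\Lambda)$, where $E_\ell'=k(r_2-r_0-\ell)+r_0(n-r_2+1)$ and $e_\ell'=r_2-r_0-\ell$.

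The key bookkeeping observation is that the $\ell$-dependent factors built into $\widetilde T_j(\calP^2)$ are designed to cancel the $\ell$-dependence of $E_\ell'$ and $e_\ell'$. Indeed, multiplying the operator weight $N(\calP)^{j(k-n-1)}\chi^*(\calP)^\ell N(\calP)^{k\ell}\bbeta(n-\ell,j-\ell)$ by $N(\calP)^{E_\ell'}\chi^*(\calP)^{e_\ell'}$, the factors $N(\calP)^{k\ell}$ and $\chi^*(\calP)^\ell$ cancel against the $-k\ell$ in $E_\ell'$ and the $-\ell$ in $e_\ell'$, leaving a prefactor $N(\calP)^{j(k-n-1)+k(r_2-r_0)+r_0(n-r_2+1)}\chi^*(\calP)^{r_2-r_0}$ independent of $\ell$. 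Thus the coefficient of $\e\{\Omega,\tau\}$ factors as this prefactor times $\sum_\ell\bbeta(n-\ell,j-\ell)\balpha'_\ell(\Omega,\Lambda)$. Reindexing by $s=\ell-r_0-r_2$, the dimension of the subspaces $\Lambda_1$ appearing in the definition of $\balpha'_\ell$, this inner sum runs over $0\le s\le r_1$ with weight $\bbeta(n-r_0-r_2-s,\,r_1-s)$.

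It then remains to establish the local identity
\[
\sum_{s=0}^{r_1}\bbeta(n-r_0-r_2-s,\,r_1-s)\,\balpha'_{r_0+r_2+s}(\Omega,\Lambda)
=N(\calP)^{r_1(r_1+1)/2}\,\chi^*(\calP)^{j}\,\balpha_j(\Omega,\Lambda),
\]
after which matching the $N(\calP)$- and $\chi^*$-exponents against $E_j$ and $e_j$ is immediate: the extra $N(\calP)^{r_1(r_1+1)/2}$ is exactly the number of symmetric $r_1\times r_1$ matrices over $\F=\Ok/\calP$, and the extra $\chi^*(\calP)^{j}$ is a product of Gauss-sum signs. This identity concerns only the quadratic space $\Omega_1=(\Omega\cap\Lambda)/\calP(\Omega+\Lambda)$ over $\F$: each $\balpha'_\ell(\Omega,\Lambda)$ is a sum over $s$-dimensional subspaces $\Lambda_1\subseteq\Omega_1$ of an incomplete Gauss sum $\sum_{W'}\e\{Q[U_{\Lambda_1}]W'\}$ taken over the \emph{invertible} symmetric $W'$ modulo $\calP$, and the $\bbeta$-weighted sum over $s$ completes these incomplete sums into a count of totally isotropic $r_1$-dimensional (that is, codimension $n-j$) subspaces of $\Omega_1$.

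The hard part will be this Gauss-sum completion. I would evaluate the incomplete sums by stratifying the symmetric matrices $W'$ according to the rank and radical of the form they induce, so that the complete sum over all symmetric $W'$—which returns $N(\calP)^{s(s+1)/2}$ exactly when the Gram matrix $\eta'Q[U_{\Lambda_1}]$ vanishes modulo $\calP$, i.e.\ when $\Lambda_1$ is totally isotropic, and $0$ otherwise—is recovered by a $\bbeta$-weighted inclusion–exclusion over subspaces; the coefficients $\bbeta(n-\ell,j-\ell)$ are precisely those that invert this stratification and extract the dimension-$r_1$ totally isotropic count. This is the local content carried out in the first step of the proof of Theorem 2.1 of \cite{theta II}, and those arguments transfer directly once one uses the Gauss-sum evaluations of Proposition 2.6. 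The one point demanding separate care is the dyadic case $\calP\mid2$, where ``totally isotropic'' must be read off the quadratic form $\q'=\tfrac12\eta'\q$ rather than the bilinear form $B'$, exactly as in the two cases distinguished in Proposition 2.6.
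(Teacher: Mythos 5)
Your skeleton is the same as the paper's: substitute the definition of $\widetilde T_j(\calP^2)$, apply Theorem 4.4 to each $S_1\cdots S_{\ell}(\calP^{-1})T_{\ell}(\calP^2)$, use that $\balpha'_{\ell}(\Omega,\Lambda)=0$ unless $\ell\ge r_0+r_2$, and observe that the weights $\chi^*(\calP)^{\ell}N(\calP)^{k\ell}\bbeta(n-\ell,j-\ell)$ are designed so that all $\ell$-dependence in the exponents cancels. Your prefactor $\chi^*(\calP)^{r_2-r_0}N(\calP)^{j(k-n-1)+k(r_2-r_0)+r_0(n-r_2+1)}$ is correct, and your reduction to a completion identity over the finite quadratic space $\Omega_1=(\Omega\cap\Lambda)/\calP(\Omega+\Lambda)$ is exactly the paper's route, including the separate dyadic convention $\q'=\tfrac12\eta'\q$.

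The genuine gap is the factor $\chi^*(\calP)^{j}$ you insert into the local identity. That identity is a statement purely about the quadratic space $\Omega_1$ over $\F=\Ok/\calP$, so neither side of it can depend on $\chi^*$ at all; what the paper proves (and what is true) is
$$\sum_{\ell=0}^{j}\bbeta(n-\ell,j-\ell)\,\balpha'_{\ell}(\Omega,\Lambda)
=N(\calP)^{(j-r_0-r_2)(j-r_0-r_2+1)/2}\,\balpha_j(\Omega,\Lambda),$$
with no sign. The mechanism is: sort each dimension-$a$ subspace $\Lambda_1$ into the $(j-r_0-r_2)$-dimensional subspaces $\Omega_1'$ containing it (this is where the $\bbeta$-weights enter), then stratify all symmetric matrices of size $j-r_0-r_2$ by rank and isometry class via orbit--stabilizer counting, so that the incomplete sums over invertible $W'$ reassemble into the complete \emph{additive} character sum $\sum_{Y}\e\{2V'Y\pi'\beta'\}$, which equals $N(\calP)^{(j-r_0-r_2)(j-r_0-r_2+1)/2}$ or $0$ --- never a negative quantity. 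Only linear character sums occur in this step; no quadratic Gauss sums are evaluated, so there is no ``product of Gauss-sum signs,'' and Proposition 2.6 plays no role here. Concretely, your identity fails for $\chi^*(\calP)=-1$, $j=1$, $r_0=r_2=0$: the left side is $N(\calP)$ times the number of isotropic lines in $\Omega_1$, while your right side is minus that. What your bookkeeping has actually exposed is an inconsistency internal to the paper: the stated $e_j=j+r_2-r_0$ differs by exactly the constant $\chi^*(\calP)^{j}$ from what Theorem 4.4's $e'_{\ell}=r_2-r_0-\ell$ and the stated definition of $\widetilde T_j$ produce (the $N(\calP)$-exponents do match, since $N(\calP)^{j(k-n-1)}N(\calP)^{k\ell}$ absorbs the corresponding shift, but no $\chi^*(\calP)^{j}$ prefactor is present to do the same for the character). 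The honest output of your computation is the exponent $r_2-r_0$; the mismatch is a normalization typo to be flagged, not a sign that can be manufactured inside the finite-field count, where it provably cannot arise.
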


\begin{proof}
Take $\Omega\subseteq\calP^{-1}L$ to have formal rank $n$, and fix $\Lambda$ so that
$\calP\Omega\subseteq\Lambda\subseteq\Delta$ where $\Delta$ is the formal intersection of $\calP^{-1}\Omega$ and $L$.  Set $r_0=\mult_{\{\Omega:\Lambda\}}(\calP)$, $r_2=\mult_{\{\Omega:\Lambda\}}(\calP^{-1})$, and $r=r_0+r_2$.
Assume that the quadratic form $\q$ restricted to $\Omega$ is even $\n$-integral.
Given Theorem 4.4 and the definition of $\widetilde T_j(\calP^2)$,
we want to evaluate
$$\sum_{\ell=0}^j \bbeta(n-\ell,j-\ell)\balpha_{\ell}'(\Omega,\Lambda).$$

First suppose that $\calP\nmid 2$; set $\F=\Ok/\calP$.  
Take $\Omega_1=(\Lambda\cap\Omega)/\calP(\Lambda+\Omega)$; as discussed above, we
consider $\Omega_1$ as a dimension $n-r$ vector space over $\F$ equipped with the quadratic form
$\q'=\eta'\q$.  Take $V\in\F^{n-r,n-r}_{\sym}$ so that $\Omega_1\simeq 2V$.  
In Theorem 4.1, we replace $Q[G_1]W$ by $2V[G_1']W_1\pi'\beta'$ where $G_1'$ varies as in Theorem 4.1, and $"_1$ varies over $\Ok^{\ell-r,\ell-r}_{\sym}$ modulo $\calP$ so that $\det W_1\not\in\calP$.  Then
$\Omega_1G'_1\big<I_{\ell-r},0_{n-\ell}\big>$ varies over all codimension $n-\ell$ subspaces of $\Omega_1$, and 
$$\balpha_{\ell}'(\Omega,\Lambda)
=\sum_{G'_1,W_1}\e\{2G'_1V\,^tG'_1\big<W_1,0_{n-\ell}\big>\pi'\beta'\}.$$

For $U\in\F^{a,a}_{\sym}$, we set
$$r^*(V,U)=\#\{C\in\F^{r,a}:\ ^tCVC=U,\ \rank C=a\ \}$$
and we set $R^*(V,U)=r^*(V,U)/o(U)$ where $O(U)$ is the  orthogonal group of $U$ and $o(U)=|O(U)|$.  
(The condition that $\rank C=a$ ensures that $U$ describes the quadratic form on a rank $a$ space.)

Now, given a dimension $j'=j-r$ subspace $\Omega_1'$ of $\Omega_1$, and a dimension $a$ subspace $\Lambda_1$ of $\Omega_1'$
($0\le a\le j'$), there are matrices $V'\in\F^{j',j'}_{\sym}$ and $U\in\F^{a,a}_{\sym}$ so that $\Omega_1'\simeq 2V'$ and $\Lambda_1\simeq 2U$.  Also, for given $\Lambda_1$ as above,
there are $\bbeta(n-r-a,j'-a)$ dimension $j'$ subspaces $\Omega_1'$ of $\Omega_1$ that contain $\Lambda_1$.
(Note that if $a=0$ then $\Lambda_1=\{0\}$.)  Letting $\Lambda_1$ vary over all dimension $a$ subspaces of $\Omega_1$ and sorting $\Lambda_1$ and $\Omega_1'$ by isometry classes,
we get
\begin{align*}
&\sum_{\cls U\in\F^{a,a}_{\sym}} \bbeta(n-r-a,j-r-a)R^*(V,U)\\
&\quad =
\sum_{\cls U\in\F^{a,a}_{\sym}}\sum_{\cls V'\in\F^{j-r,j-r}_{\sym} } R^*(V,V')R^*(V',U).
\end{align*}
Thus
\begin{align*}
&\sum_{a=0}^{j-r} \bbeta(n-r-a,j-r-a)\balpha'_{\ell}(\Omega,\Lambda)\\
&\quad=
\sum_{a=0}^{j-r} \bbeta(n-r-a,j-r-a)
\sum_{\cls U\in\F^{a,a}_{\sym}}R^*(V,U)
\sum_{\substack{W_1\in\F^{a,a}_{\sym}\\ \det W_1\not=0}} \e\{2\pi'\beta'UW_1\}\\
&\quad=
\sum_{\cls V'\in\F^{j-r,j-r}_{\sym}} R^*(V,V')
\sum_{a=0}^{j-r}\sum_{\cls U\in\F^{a,a}_{\sym}} R^*(V',U)
\sum_{\substack{W_1\in\F^{a,a}_{\sym}\\ \det W_1\not=0}}
\e\{2\pi'\beta'UW_1\}.
\end{align*}

We now fix $V'\in\F^{j-r,j-r}_{\sym}$, and to ease notation, we set $j'=j-r$.
We want to show that 
\begin{align*}
&\sum_{a=0}^{j-r}\sum_{\cls U\in\F^{a,a}_{\sym}} R^*(V',U)
\sum_{\substack{W'\in\F^{a,a}_{\sym}\\ \det W'\not=0}}
\e\{2\pi'\beta'UW'\}\\
&\quad=
\sum_{Y\in\F^{j',j'}_{\sym}}\e\{2V'Y\pi'\beta'\}.
\end{align*}
We know that $GL_{j'}(\F)$ acts by conjugation on $\F^{j',j'}_{\sym}$, and thus $\F^{j',j'}_{\sym}$ is partitioned into orbits (i.e. isometry classes) with representatives
$$0_{j'},\ \big<I_{c},0_{j'-c}\big>,\ \big<J_{c},0_{j'-c}\big>,\ 1\le c\le j'$$
where $J_c=\big<I_{c-1},\omega\big>$, and $\omega\in\Ok$ with $\left(\frac{\omega}{\calP}\right)=-1$
(this can be deduced, for instance, from 92:1 of \cite{O'M}).
Also, with $U\in\F^{j',j'}_{\sym}$,
as $G$ varies over $GL_{j'}(F)$, $^tGUG$ varies $o(U)$ times over
the distinct matrices in the isometry class of $U$.
Thus we get
\begin{align*}
	&\sum_{Y\in\F^{j',j'}_{\sym}}\e\{2V'Y\pi'\beta'\}\\
	&\quad=1+
	\sum_{a=1}^{j'}\sum_{G\in GL_{j'}(\F)}
	\frac{1}{o(I_a\perp 0_{j'-a})}
	\e\{2V'\,^tG\big<I_a,0_{j'-a}\big>G\pi'\beta'\}\\
	&\qquad +
	\sum_{a=1}^{j'}\sum_{G\in GL_{j'}(\F)}
	\frac{1}{o(J_a\perp 0_{j'-a})}
	\e\{2V'\,^tG\big<J_a,0_{j'-a}\big>G\pi'\beta'\}.
\end{align*}
Now fix $a$, $1\le a\le j'$.  With $U_G$ the upper left $a\times a$ block of $GV'\,^tG$, we have
\begin{align*}
&\sum_{G\in GL_{j'}(\F)} \e\{2V'\,^tG\big<I_a,0_{j'-a}\big>G\pi'\beta'\}\\
&\quad=
\sum_{G\in GL_{j'}(\F)} \e\{2GV'\,^tG\big<I_a,0_{j'-a}\big>\pi'\beta'\}\\
&\quad=
\sum_{G\in GL_{j'}(\F)} \e\{2U_GI_a\pi'\beta'\}.
\end{align*}
Given $U\in\F^{a,a}_{\sym}$, the number of $G\in GL_{j'}(\F)$ so that $U_G=U$ is
\begin{align*}
\#\{C\in\F^{j',a}:\ ^tCV'C=U,\ \rank C=a\ \}
=r^*(V',U)\boldeta(j',a)
\end{align*}
as the number of ways to extend $C\in\F^{j',a}$ to an element of $GL_{j'}(\F)$ is 0 if $\rank C<a$, and $\boldeta(j',a)$ otherwise.
So (for $a$ still fixed),
\begin{align*}
&\sum_{G\in GL_{j'}(\F)}\frac{1}{o(I_a\perp 0_{j'-a})}
\e\{2V'\,^tG\big<I_a,0_{j'-a}\big>G\pi'\beta'\}\\
&\quad=
\sum_{U\in\F^{a,a}_{\sym}}
\frac{\boldeta(j',a)}{o(I_a\perp 0_{j'-a})}	
	r^*(V',U)\e\{2U\pi'\beta'\}.
\end{align*}
We now sort the $U$ in the above sum into isometry classes, and note that for $U_0\in\F^{a,a}_{\sym}$,
\begin{align*}
&\sum_{U\in\cls U_0}r^*(V',U)\e\{2U\pi'\beta'\}\\
&\quad=
\sum_{G\in O(U_0)\backslash GL_a(\F)}
r^*(V',U_0)\e\{2\,^tGU_0G\pi'\beta'\}\\
&\quad=
\sum_{G\in GL_a(\F)} R^*(V,U_0)\e\{2\,^tGU_0G\pi'\beta'\}.
\end{align*}
It is also not difficult to check the
$o(I_a\perp 0_{j',a})=o(I_a)\boldeta(j',a)$ so
\begin{align*}
&\sum_{U\in\F^{a,a}_{\sym}}
\frac{\boldeta(j',a)}{o(I_a\perp 0_{j'-a})} r^*(V',U)\e\{2U\pi'\beta'\}\\
&\quad=
\sum_{cls U\in\F^{a,a}_{\sym}}
\frac{R^*(V',U)}{o(I_a)} \sum_{G\in GL_a(\F)} 
\e\{2UGI_a\,^tG\pi'\beta'\}\\
&\quad=
\sum_{cls U\in\F^{a,a}_{\sym}}
R^*(V',U)\sum_{W'\in\cls I_a} \e\{2UW'\pi'\beta'\}.
\end{align*}

Similar arguments hold when $I_a$ is replaced by $J_a$, giving us
\begin{align*}
\sum_{Y\in\F^{j',j'}_{\sym}} \e\{2V'Y\pi'\beta'\}
&=
1+\sum_{a=1}^{j'}\sum_{\cls U\in\F^{a,a}_{\sym}}
R^*(V',U)\sum_{W_1\in\cls I_a}\e\{2UW_1\pi'\beta'\}\\
&\quad+
\sum_{a=1}^{j'}\sum_{\cls U\in\F^{a,a}_{\sym}}
R^*(V',U)\sum_{W_1\in\cls J_a}\e\{2UW_1\pi'\beta'\}\\
&=
\sum_{a=0}^{j'}\sum_{\cls U\F^{a,a}_{\sym}} R^*(V',U)
\sum_{\substack{W_1\in\F^{a,a}_{\sym}\\ \det W_1\not=0}}
\e\{2UW_1\pi'\beta'\}.
\end{align*}
The sum $\sum_{Y\in\F^{j',j'}_{\sym}} \e\{2V'Y\pi'\beta'\}$
tests whether $V'$ is 0 modulo $\calP$, returning $N(\calP)^{j'(j'+1)/2}$ if $V'$ passes this test and 0 otherwise.  So
remembering that $j'=j-r$, we see that
$$\sum_{a=0}^{j-r}\bbeta(n-r-a,j-r-a)\balpha_{\ell}'(\Omega,\Lambda)
=N(\calP)^{(j-r)(j-r+1)/2}R^*(V,0_{j-r}).$$
Also recall that $V$ is chosen so that
 $\Omega_1\simeq 2V$ where $V$ is determined by $\Omega$ and $\Lambda$ as described at the beginning of the proof; so
 $\balpha_j(\Omega,\Lambda)=R^*(V,0_{j-r})$, and the theorem follows in the case that $\calP\nmid 2$.
 
 Now suppose $\calP|2$.  
 Then the above argument carries over almost directly, with just a few adjustments.  First, with $\Omega_1$ as above and $\q'=\frac{1}{2}\eta'\q$, we take $V\in\Ok^{n-r,n-r}_{\sym}$ 
 to encode $\q'$ as follows.  With 
 $\Omega_1=\F x_1\oplus\cdots\oplus\F x_{n-r}$, the $s,t$-entry of $V$ is $B'(x_s,x_t)$ when $s\not=t$, and $2\q'(x_s)$ when $s=t$.  With $G,G'\in\Ok^{n-r,n-r}$ so that $G\equiv G'\ (\calP)$ and $\det G\not\in\calP$, the diagonal of
 $GV\,^tG$ is congruent modulo $2\calP$ to the diagonal of $G'V\,^tG'$.  So for $G\in GL_{n-r}(\F)$, we have
 $G\in O(V)$ if $GV\,^tG$ and $V$ are congruent modulo $\calP$ with the diagonal of $GV\,^tG$ and the diagonal of $V$ congruent modulo $2\calP$.  With $\Omega_1'$ a subspace of $\Omega_1$ of dimension $j'$, we take $V'$ to be a matrix encoding $\q'$ on $\Omega_1'$, and $r^*(V,V')$ is the number of matrices $C\in\F^{n-r,j'}$ with rank $j'$ so that $^tCVC\equiv V'\ (\calP)$ with the diagonal of
 $^tCVC$ congruent modulo $2\calP$ to the diagonal of $V'$.
 Then the above argument when $calP\nmid 2$ carries over to the case when $\calP|2$.
 \end{proof}

\bigskip

\section{Average theta series as Hecke eigenforms}
\smallskip

In this section, we adapt the local arguments from \cite{theta I} and \cite{theta II} to Hilbert-Siegel theta series.
As a first step, we adapt Proposition 2.1 from \cite{theta II}.  Throughout, $\calP$ is a prime ideal with
$\calP\nmid\stufe$.

\smallskip\noindent
{\bf Definitions.}  For $r,m\in\Z$ with $r>0$, set
$$\bdelta(m,r)=\prod_{i=0}^{r-1}(N(\calP)^{m-i}+1),\ 
\bmu(m,r)=\prod_{i=0}^{r-1}(N(\calP)^{m-i}-1).$$
We take $\bdelta(m,0)=1=\bmu(m,0).$
\smallskip

\begin{prop}  
Take $j\in\Z_+$ so that $j\le n$.
We have
$$\theta(L;\tau)|\widetilde T_j(\calP^2)=\sum_{\Omega}\widetilde c_j(\Omega)\e\{\Omega,\tau\}$$
where $\Omega$ varies over all even $\eta$-integral free sublattices of $\calP^{-1}L$ with formal rank $n$, and $\widetilde c_j(\Omega)$ is defined as follows.  First, as in Proposition 4.2, we decompose $\Omega$ as
$\Ok y_1\oplus\cdots\oplus\Ok y_n$ where
$$y_i\in\begin{cases}\calP^{-1}L\smallsetminus L&\text{if $1\le i\le d_0$,}\\
L\smallsetminus\calP L&\text{if $d_0<i\le d_0+d_1$,}\\
\calP L&\text{otherwise}\end{cases}$$
with $y_1,\ldots,y_{d_0+d_1}$ linearly independent in $\kfld L$
(so $d_0,d_1$ are invariants of $\Omega$).
Set $d_2=n-d_0-d_1$.  
With $\Omega_1=\Ok y_{d_0+1}\oplus\cdots\oplus\Ok y_{d_0+d_1}$,
let $\overline\Omega_1$ denote the space
$\Omega_1/\calP\Omega_1$ equipped with the quadratic form $\eta'\q$.  Set
$$E=E'(\ell,t,\Omega)=\ell(k-d_0-d_1)+\ell(\ell-1)/2+t(k-n)+t(t+1)/2.$$
\begin{enumerate}
\item[(a)]  Say $\chi^*(\calP)=1.$ Then
\begin{align*}
\widetilde c_j(\Omega)
&=\sum_{\ell,t}N(\calP)^E\varphi_{\ell}(\overline\Omega_1)
\bdelta(k-d_0-\ell-1,t)\bbeta(d_2,t)\\
&\quad\cdot
\bbeta(n-d_0-\ell-t,j-d_0-\ell-t).
\end{align*}
\item[(b)]  Say $\chi^*(\calP)=-1.$ Then
\begin{align*}
\widetilde c_j(\Omega)
&=\sum_{\ell,t}(-1)^{\ell}N(\calP)^E\varphi_{\ell}(\overline\Omega_1)
\bbeta(k-d_0-\ell-1,t)\bmu(d_2,t)\\
&\quad\cdot\bbeta(n-d_0-\ell-t,j-d_0-\ell-t).
\end{align*}
\end{enumerate}

\end{prop}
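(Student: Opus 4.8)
The plan is to start from the expression for $\theta(L;\tau)|\widetilde T_j(\calP^2)$ furnished by Theorem 4.5 and to evaluate, for each fixed $\Omega$, the inner sum
$$\widetilde c_j(\Omega)=\sum_{\calP\Omega\subseteq\Lambda\subseteq\Delta}\chi^*(\calP)^{j+r_2-r_0}N(\calP)^{E_j(\Omega,\Lambda)}\balpha_j(\Omega,\Lambda),$$
where $r_0=\mult_{\{\Omega:\Lambda\}}(\calP)$ and $r_2=\mult_{\{\Omega:\Lambda\}}(\calP^{-1})$. Since every quantity here depends only on the relative position of $\Lambda$, $\Omega$, and $L$ at $\calP$, I would work entirely over $\Ok_{\calP}$, using the decomposition $\Omega=\Ok y_1\oplus\cdots\oplus\Ok y_n$ and the formal intersection $\Delta$ of Proposition 4.2. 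The first task is to record the quadratic structure of $\overline\Omega=\Omega/\calP\Omega$ under $\eta'\q$: the images of $y_1,\ldots,y_{d_0}$ span a totally isotropic subspace, the middle block reduces to $\overline\Omega_1$, and, because $\calP\nmid\stufe$ forces $\Ok_{\calP}L$ to be $\n$-modular so that $L/\calP L$ is a nondegenerate space of dimension $2k$ whose Witt type is recorded by $\chi^*(\calP)$, the nondegenerate part governing the counts inherits this type. For $\calP\mid2$ I would carry the diagonal entries modulo $2\calP$ throughout, exactly as in the proof of Theorem 4.5.

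Next I would stratify the lattices $\Lambda$ with $\calP\Omega\subseteq\Lambda\subseteq\Delta$. The block form of $\Delta$ forces $r_0\ge d_0$ and $r_2\le d_2$, so I would introduce $\ell=r_0-d_0$ and $t=r_2$ as the two summation parameters: $\ell$ records the additional $\calP$-contractions coming from the middle $d_1$-block, and $t$ records the number of terminal directions in which $\Lambda$ extends toward $\calP^{-1}$. The even $\n$-integrality of the relevant lattices forces the $\ell$ extra contracted directions to lie along a totally isotropic $\ell$-dimensional subspace of $\overline\Omega_1$, so summing over these choices produces the factor $\varphi_\ell(\overline\Omega_1)$; the $t$ terminal extensions, organized as a $t$-dimensional subspace among the $d_2$ available directions, contribute $\bbeta(d_2,t)$.

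With $(\ell,t)$ (equivalently $r_0,r_2$) fixed, the remaining work is to evaluate $\balpha_j(\Omega,\Lambda)$, the number of totally isotropic codimension-$(n-j)$, i.e.\ dimension-$(j-d_0-\ell-t)$, subspaces of the quadratic space $\Omega_1=(\Omega\cap\Lambda)/\calP(\Omega+\Lambda)$ of dimension $n-d_0-\ell-t$. Here I would apply the standard formulas over $\F=\Ok/\calP$ for the number of totally isotropic subspaces of a given dimension, which split as a subspace count in the radical times an isotropic count in the nondegenerate part. The nondegenerate part carries Witt type $\chi^*(\calP)$, so its totally isotropic subspace counts are the products $\bdelta(k-d_0-\ell-1,t)$ when $\chi^*(\calP)=1$ and $\bmu(k-d_0-\ell-1,t)$ when $\chi^*(\calP)=-1$, while the residual subspace counts give $\bbeta(n-d_0-\ell-t,j-d_0-\ell-t)$. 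Reconciling the prefactor $\chi^*(\calP)^{j+r_2-r_0}$ with the type dependence of these counts produces the trivial sign in case (a) and the factor $(-1)^{\ell}$ in case (b); and collecting the exponent of $N(\calP)$ by combining $E_j(\Omega,\Lambda)$ with the $N(\calP)$-power prefactors of the counting formulas and simplifying (a routine if lengthy manipulation) yields $E=E'(\ell,t,\Omega)$.

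The main obstacle is this third step: correctly identifying the radical and the Witt type of $\Omega_1$ as $\Lambda$ varies, and matching the resulting totally isotropic subspace counts to the clean products $\varphi_\ell$, $\bbeta$, and $\bdelta$ or $\bmu$. This is precisely the finite-field quadratic-form bookkeeping underlying Proposition 2.1 of \cite{theta II}; the content of the present argument is to verify that that bookkeeping transports to $\F=\Ok/\calP$, with the only genuine extra care needed in the dyadic case $\calP\mid2$, where isotropy and orthogonality must be tested modulo $2\calP$ on the diagonal.
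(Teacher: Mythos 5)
You adopt the same frame as the paper: its proof is essentially a pointer to Proposition 1.4 of \cite{theta I} and Proposition 2.1 of \cite{theta II}, i.e.\ start from Theorem 4.5 and construct all lattices $\Lambda$ with $\calP\Omega\subseteq\Lambda\subseteq\Delta$ by a two-stage count over $\F=\Ok/\calP$. The gap is in your combinatorial mechanism, which is the entire content of the proposition and which, as described, cannot produce the stated coefficients. You match each $(\ell,t)$-term of the proposition to the single stratum of Theorem 4.5 with $r_0=d_0+\ell$, $r_2=t$, with $\bdelta(k-d_0-\ell-1,t)$ (resp.\ a $\bmu$-factor) coming from totally isotropic subspace counts in a nondegenerate space of Witt type $\chi^*(\calP)$. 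But within one stratum the prefactor $\chi^*(\calP)^{e_j}N(\calP)^{E_j}$ is a single signed power of $N(\calP)$, and everything else is an integer count involving only dimensions and $\overline\Omega_1$: since $\Lambda\subseteq\Delta\subseteq L$ with $L$ and $\Omega$ even $\n$-integral, one checks that $(\Omega\cap\Lambda)/\calP(\Omega+\Lambda)$ is isometric to $C/R$, where $C$ is a subspace of the orthogonal sum of $\overline\Omega_1$ with a $d_2$-dimensional zero form and $R$ lies in the radical of $C$. So no nondegenerate space of dimension near $2(k-d_0-\ell)$, and nothing carrying the type $\chi^*(\calP)$, occurs inside a stratum; hence $\balpha_j(\Omega,\Lambda)$ and the number of $\Lambda$'s in a stratum are independent of $k$ and of $\chi^*(\calP)$, and a two-term factor such as $\bdelta(k-d_0-\ell-1,t)$ can never arise there. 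Concretely, take $n=j=1$ and $\Omega=\Ok y$ with $y\in\calP L$: the strata $\Lambda=\calP\Omega,\Omega,\calP^{-1}\Omega$ contribute $1$, $\chi^*(\calP)N(\calP)^{k-1}$, $N(\calP)^{2k-2}$. Under your identification the proposition's $(\ell,t)=(0,1)$ term would be the stratum $(r_0,r_2)=(0,1)$, a single power $N(\calP)^{2k-2}$, yet that term contains the factor $\bdelta(k-1,1)=N(\calP)^{k-1}+1$, and no power of $N(\calP)$ times $N(\calP)^{k-1}+1$ is a single power. What actually happens is that this term absorbs both strata $\Lambda=\Omega$ and $\Lambda=\calP^{-1}\Omega$, whose sum $\chi^*(\calP)N(\calP)^{k-1}+N(\calP)^{2k-2}=N(\calP)^{k-1}\bigl(N(\calP)^{k-1}+\chi^*(\calP)\bigr)$ is precisely where $\bdelta(k-1,1)$, resp.\ $\bmu(k-1,1)$, comes from: the $\bdelta/\bmu$ factors and the sign $(-1)^{\ell}$ are created by resumming the signed, $k$-dependent prefactors of Theorem 4.5 over several $(r_0,r_2)$-strata, and organizing that resummation is exactly what the two-stage construction of \cite{theta I}, \cite{theta II} accomplishes.

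Your source for $\varphi_{\ell}(\overline\Omega_1)$ is likewise misidentified. Theorem 4.5 sums over \emph{all} $\Lambda$ with $\calP\Omega\subseteq\Lambda\subseteq\Delta$; there is no integrality condition on $\Lambda$ (only on $\Omega$), so nothing forces your ``extra contracted directions'' to be totally isotropic. The factor $\varphi_{\ell}(\overline\Omega_1)$ comes instead out of evaluating $\balpha_j(\Omega,\Lambda)$: the totally isotropic subspaces it counts are sorted by their projections to $\overline\Omega_1$ (the zero-form summands split off), and the proposition's $\ell$ is the dimension of that projection, not $r_0-d_0$. Rank one again shows this: for $y\in L\smallsetminus\calP L$ the proposition's term $N(\calP)^{k-1}\varphi_1(\overline\Omega_1)$ is the stratum $\Lambda=\Omega$, for which $r_0-d_0=0$ while $\ell=1$. (A related slip: the images of $y_1,\ldots,y_{d_0}$ need not be isotropic in $\Omega/\calP\Omega$; what is true, and what feeds into Proposition 5.2, is that $\overline{\pi y_1},\ldots,\overline{\pi y_{d_0}}$ span a totally isotropic subspace of $L/\calP L$.) So your opening and closing moves (localize at $\calP$, reduce to finite-field counts, treat $\calP\mid 2$ by tracking diagonals modulo $2\calP$) agree with the paper, but the bookkeeping in the middle is not that of \cite{theta I}, \cite{theta II}, and carried out as written it would not yield the displayed formulas.
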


\begin{proof}
Beginning with Theorem 4.5, the proof proceeds exactly as the proof of Proposition 1.4 \cite{theta I} and of Proposition 2.1  \cite{theta II}.  This consists of constructing all lattices $\Lambda$ with
$\calP\Omega\subseteq\Lambda\subseteq\Delta$ where $\Delta$ is the formal intersection of $\calP^{-1}\Omega$ and $L$.  The construction is in two stages; in each stage, we work over an $\Ok/\calP$-space.  Thus the theory is essentially the same as when we work over $\Z/p\Z$.  The theory of quadratic forms over finite fields that we use can be found in \cite{Ger} and \cite{O'M}.
\end{proof}

Next we have the analogue of Proposition 1.5 \cite{theta I} and Proposition 2.2 \cite{theta II}.

\begin{prop}
Take $j\in\Z_+$ so that $j\le n$; also, assume that $j\le k$ if $\chi^*(\calP)=1$, and $j<k$ if $\chi^*(\calP)=-1$.
We let $K_j$ vary over all lattices so that $\calP L\subseteq K_j\subseteq \calP^{-1}L$,
$\mult_{\{L:K_j\}}(\calP^{-1})=\mult_{\{L:K_j\}}(\calP)=j$, and $K_j\in\gen L.$
Then
$$\sum_{K_j}\theta(K_j;\tau)=\sum_{\Omega}b_j(\Omega)\e\{\Omega,\tau\}$$
where $\Omega$ varies over all even $\n$-integral free sublattices of $\calP^{-1}L$ with formal rank $n$, and decomposing $\Omega$ is in Proposition 5.1, we have invariants $d_0,d_1,d_2$ attached to $\Omega$; then
if $\chi^*(\calP)=1$, we have
\begin{align*}
b_j(\Omega)&=N(\calP)^{(j-d_0)(j-d_0-1)/2}\sum_{\ell}N(\calP)^{\ell(k-j-d_1+\ell)}
\varphi_{\ell}(\overline\Omega_1)\\
&\quad\cdot
\bdelta(k-d_0-\ell-1,j-d_0-\ell)\bbeta(k-d_0-d_1,j-d_0-\ell),
\end{align*}
and if $\chi^*(\calP)=-1$, we have
\begin{align*}
b_j(\Omega)&=N(\calP)^{(j-d_0)(j-d_0-1)/2}\sum_{\ell}(-1)^{\ell}N(\calP)^{\ell(k-j-d_1+\ell)}
\varphi_{\ell}(\overline\Omega_1)\\
&\quad\cdot
\bbeta(k-d_0-\ell-1,j-d_0-\ell)\bdelta(k-d_0-d_1,j-d_0-\ell).
\end{align*}
\end{prop}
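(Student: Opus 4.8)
The plan is to unfold each individual theta series into a sum of the exponential sums $\e\{\Omega,\tau\}$ and then interchange the order of summation, reducing the statement to a purely local counting problem at $\calP$. First I would write
$$\theta(K_j;\tau)=\sum_{\Omega\subseteq K_j}\e\{\Omega,\tau\},$$
the sum running over the free sublattices $\Omega$ of $K_j$ of formal rank $n$; recall that $\e\{\Omega,\tau\}$ already packages a full $GL_n(\Ok)$-orbit of bases, so each $\Omega$ is counted once. Summing over the prescribed family of $K_j$ and reversing the two sums gives
$$\sum_{K_j}\theta(K_j;\tau)=\sum_{\Omega}b_j(\Omega)\,\e\{\Omega,\tau\},\qquad
b_j(\Omega)=\#\{K_j:\ \Omega\subseteq K_j\}.$$
Since every admissible $K_j$ satisfies $K_j\subseteq\calP^{-1}L$, the index $\Omega$ ranges over free rank-$n$ sublattices of $\calP^{-1}L$; and since each $K_j$ lies in $\gen L$ it is even $\n$-integral, so any $\Omega\subseteq K_j$ is even $\n$-integral, while those $\Omega$ not even $\n$-integral have $b_j(\Omega)=0$. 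This reproduces the index set in the statement and leaves us to compute the cardinality $b_j(\Omega)$.

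Next I would localize. The constraint $\calP L\subseteq K_j\subseteq\calP^{-1}L$ forces $K_j=L$ at every prime other than $\calP$, so the condition $K_j\in\gen L$ collapses to $\Ok_{\calP}K_j\cong\Ok_{\calP}L$, and $b_j(\Omega)$ depends only on the local data of $\Omega$ at $\calP$. Because $\calP\nmid\stufe$, the lattice $\Ok_{\calP}L$ is $\n$-modular, so $L/\calP L$ is a nondegenerate quadratic space of dimension $2k$ over $\F=\Ok/\calP$ with form $\eta'\q$, of Witt index $k$ when $\chi^*(\calP)=1$ and of Witt index $k-1$ when $\chi^*(\calP)=-1$. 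The crucial translation is that the multiplicity conditions $\mult_{\{L:K_j\}}(\calP)=\mult_{\{L:K_j\}}(\calP^{-1})=j$ together with the isometry condition $\Ok_{\calP}K_j\cong\Ok_{\calP}L$ are equivalent to building $K_j$ from a $j$-dimensional \emph{totally isotropic} subspace of $L/\calP L$; this is precisely why the split/non-split dichotomy enters and why one must assume $j\le k$ (resp.\ $j<k$) so that such subspaces exist. The containment $\Omega\subseteq K_j$ then imposes compatibility between this isotropic datum and the images of $\Omega$ in $L/\calP L$ and in $\calP^{-1}L/L$, controlled by the invariants $d_0,d_1,d_2$.

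I would then carry out the count exactly as in Proposition 1.5 of \cite{theta I} and Proposition 2.2 of \cite{theta II}, these being local computations that transfer to the number field setting. The construction of the admissible $K_j\supseteq\Omega$ proceeds in two stages over $\F$-spaces and produces a sum over a parameter $\ell$, where $\ell$ records the dimension of the totally isotropic intersection of the datum defining $K_j$ with $\overline\Omega_1=\Omega_1/\calP\Omega_1$; this intersection is enumerated by $\varphi_\ell(\overline\Omega_1)$. The remaining freedom in completing this partial datum to a full $j$-dimensional totally isotropic configuration in $L/\calP L$ yields the Gaussian-binomial factors $\bbeta(\cdot,\cdot)$ together with the isotropic-completion factors $\bdelta(\cdot,\cdot)$, while the various scalings assemble into the powers $N(\calP)^{(j-d_0)(j-d_0-1)/2}$ and $N(\calP)^{\ell(k-j-d_1+\ell)}$. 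Passing from Witt index $k$ to Witt index $k-1$ interchanges the arguments carrying the $\bdelta$ and $\bbeta$ factors and introduces the sign $(-1)^\ell$ coming from the anisotropic plane; summing over $\ell$ then gives the two displayed formulas for $b_j(\Omega)$ according as $\chi^*(\calP)=\pm1$.

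The main obstacle is the finite-field bookkeeping in this last step: enumerating the totally isotropic subspaces of $L/\calP L$ with prescribed intersection with $\overline\Omega_1$ in both the split and non-split cases, and checking that the scaling contributions collect into the precise exponents of $N(\calP)$ claimed. The case $\calP\mid2$ requires the even-unimodular classification (93:11 and 93:18 of \cite{O'M}) in place of the diagonal one, but, exactly as in the proof of Theorem 4.5, the argument goes through with only cosmetic changes. Since the entire computation is local and matches the rational case verbatim, the proof ultimately reduces to transcribing the arguments of \cite{theta I} and \cite{theta II}.
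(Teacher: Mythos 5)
Your proposal is correct and takes essentially the same route as the paper: the paper's proof likewise interprets $b_j(\Omega)$ as the number of admissible $K_j\supseteq\Omega$, works locally with the regular quadratic space $L/\calP L$ over $\Ok/\calP$ equipped with $\eta'\q$ (hyperbolic exactly when $\chi^*(\calP)=1$, whence the $j\le k$ versus $j<k$ dichotomy), and defers the construction and finite-field count of all such $K_j$ to the local arguments of Proposition 1.5 of \cite{theta I}, Proposition 2.2 of \cite{theta II}, and the analogue of Lemma 4.1 of \cite{theta II}. Your account of the unfolding, the localization of the genus condition at $\calP$, and the provenance of the $\varphi_{\ell}$, $\bbeta$, $\bdelta$, $(-1)^{\ell}$ and $N(\calP)$-power factors matches that deferred computation.
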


\begin{proof}
Again, all the arguments are local.  Here we equip $L/\calP L$ with the quadratic form $\eta'\q$ so that $L/\calP L$ is a regular quadratic space over $\Ok/\calP$, with $L/\calP L$ hyperbolic if and only if $\chi^*(\calP)=1$.  Then for $\Omega$ as in the proposition, all the $K_j$ containing $\Omega$ are constructed.  An element needed to complete the proof is an analogue of Lemma 4.1 \cite{theta II}; again, the proof for this is completely local.
\end{proof}

Now Propositions 5.1 and 5.2 above, together with the proof of Theorem 2.3 \cite{theta II} gives us the following.

\begin{thm}  Take $j\in\Z_+$ so that $j\le n$; also, assume that $j\le k$ if $\chi^*(\calP)=1$, and $j<k$ if $\chi^*(\calP)=-1$.  Set
$$u_i(j)=(-1)^iN(\calP)^{i(i-1)/2}\bbeta(n-j+i,i),
\ T_j'(\calP^2)=\sum_{0\le i\le j}u_i(j)\widetilde T_{j-i}(\calP^2),$$
$$v_i(j)=
\begin{cases}(-1)^i\bbeta(k-n+i-1,i)\bdelta(k-j+i-1,i)&\text{if $\chi^*(\calP)=1$,}\\
(-1)^i\bdelta(k-n+i-1,i)\bbeta(k-j+i-1,i)&\text{if $\chi^*(\calP)=-1$}
\end{cases}$$
Then
$$\theta(L)|T'_j(\calP^2)=\sum_{0\le i\le j}v_i(j)\left(\sum_{K_{j-i}}\theta(K_{j-i})\right)$$
where $K_{j-i}$ varies subject to $\calP L\subseteq K_{j-i}\subseteq\calP^{-1}L,$
$$\mult_{\{L:K_{j-i}\}}(\calP^{-1})=\mult_{\{L:K_{j-i}\}}(\calP)=j-i,$$
and $K_{j-i}\in\gen L$.
\end{thm}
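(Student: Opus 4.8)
The plan is to compare coefficient expansions. Writing $q=N(\calP)$, the key is that both sides of the asserted identity are expanded, via Propositions 5.1 and 5.2, in the functions $\e\{\Omega,\tau\}$ indexed by the even $\n$-integral free sublattices $\Omega\subseteq\calP^{-1}L$ of formal rank $n$. By linearity and the definition $T'_j(\calP^2)=\sum_{i=0}^j u_i(j)\widetilde T_{j-i}(\calP^2)$, Proposition 5.1 gives
$$\theta(L)|T'_j(\calP^2)=\sum_\Omega\Big(\sum_{i=0}^j u_i(j)\,\widetilde c_{j-i}(\Omega)\Big)\e\{\Omega,\tau\},$$
while Proposition 5.2 expands the right-hand side as $\sum_\Omega\big(\sum_{i=0}^j v_i(j)\,b_{j-i}(\Omega)\big)\e\{\Omega,\tau\}$ over the same index set. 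Hence it suffices to match, for each fixed $\Omega$ with invariants $d_0,d_1,d_2$, the scalar identity
$$\sum_{i=0}^j u_i(j)\,\widetilde c_{j-i}(\Omega)=\sum_{i=0}^j v_i(j)\,b_{j-i}(\Omega).$$

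First I would collapse the left-hand side. Substituting the formula of Proposition 5.1 and interchanging the $i$-summation with the inner $(\ell,t)$-summation, the entire $i$-dependence sits in the Gaussian binomial $\bbeta\big(n-d_0-\ell-t,\,(j-i)-d_0-\ell-t\big)$. Setting $M=n-d_0-\ell-t$ and $p=j-d_0-\ell-t$, the inner sum
$$\sum_{i}(-1)^i q^{\binom i2}\bbeta(n-j+i,i)\,\bbeta\big(n-d_0-\ell-t,\,j-i-d_0-\ell-t\big)$$
is handled by the subset-of-subset identity together with the $q$-binomial theorem at $x=1$:
$$\bbeta(M,a)\,\bbeta(M-a,p-a)=\bbeta(M,p)\,\bbeta(p,a),\qquad \sum_{b}(-1)^b q^{\binom b2}\bbeta(p,b)=\delta_{p,0}.$$
These force the sum to vanish unless $p=0$, i.e. $t=j-d_0-\ell$, in which case it equals $1$. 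This eliminates the $t$-summation and reduces the left-hand side to a single sum over $\ell$, with the exponent $E'$ recorded at $t=j-d_0-\ell$; the surviving expression carries the factor $\bbeta(n-d_0-d_1,\,j-d_0-\ell)$ coming from $\bbeta(d_2,t)$.

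It then remains to match this single $\ell$-sum against $\sum_i v_i(j)\,b_{j-i}(\Omega)$, in both the case $\chi^*(\calP)=1$ and the case $\chi^*(\calP)=-1$. This is a purely local identity in $q=N(\calP)$: the weights $v_i(j)$ are engineered so that the alternating combination of the $b_{j-i}(\Omega)$ telescopes the products $\bdelta(k-d_0-d_1,\cdot)$ and $\bbeta(k-d_0-d_1,\cdot)$ appearing in Proposition 5.2 into the $\bbeta(n-d_0-d_1,\cdot)$ factor produced above, while simultaneously reconciling the powers of $q$. Because the coefficients $\widetilde c_{j-i}(\Omega)$ and $b_{j-i}(\Omega)$ furnished by Propositions 5.1 and 5.2 have exactly the same shape as their rational-field counterparts with $p$ replaced by $N(\calP)$, this final matching is precisely the identity verified in the proof of Theorem 2.3 of \cite{theta II}, and the argument transfers without change. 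The main obstacle is exactly this last combinatorial step: tracking the $q$-powers and the interplay of the $\bdelta$, $\bmu$, and $\bbeta$ factors across the two character cases, where the roles of $\bdelta$ and $\bmu$ (and the sign $(-1)^\ell$) are interchanged.
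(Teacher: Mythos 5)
Your proposal is correct and follows essentially the same route as the paper, whose proof of this theorem consists precisely of combining Propositions 5.1 and 5.2 with the coefficient-matching argument from the proof of Theorem 2.3 of \cite{theta II}, transferred with $p$ replaced by $N(\calP)$. The extra detail you supply (collapsing the $u_i(j)$-sum via the identities $\bbeta(M,a)\bbeta(M-a,p-a)=\bbeta(M,p)\bbeta(p,a)$ and $\sum_b(-1)^bq^{b(b-1)/2}\bbeta(p,b)=\delta_{p,0}$, then matching the surviving $\ell$-sum against $\sum_i v_i(j)b_{j-i}(\Omega)$) is exactly the content of that cited argument, so there is no substantive difference in approach.
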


The final step is average over the genus of $L$; we now define the average theta series.

\smallskip\noindent
{\bf Definition.}  For $L'$ a lattice in the genus of $L$, let $o(L')$ denote the order of the orthogonal group of $L'$.  Define the average theta series attached to the genus of $L$ as
$$\theta(\gen L;\tau)=\sum_{\cls L'\in\gen L}\frac{1}{o(L')}\theta(L';\tau)$$
where $\cls L'$ denotes the isometry class of $L'$.
(Note that sometimes people normalise the average theta series by $\frac{1}{\mass L}$ where
$\mass L=\sum_{\cls L'\in\gen L}\frac{1}{o(L')}$.)
\smallskip

We now state our main result; 
the proof combines those of Corollary 2.4 and Theorem 3.3 \cite{theta II} and primarily
consists of elementary combinatorial arguments over a finite field.

\begin{cor}  Recall that $\calP$ is a prime ideal with $\calP\nmid\stufe$, and $1\le j\le n$.
\begin{enumerate}
\item[(a)]
Suppose that $j\le k$ if $\chi^*(\calP)=1$, and $j<k$ if $\chi^*(\calP)=-1$.
We have
$$\theta(\gen L)|T'_j(\calP^2)=\lambda_j(\calP^2)\theta(\gen L)$$
where
$$\lambda_j(\calP^2)=
\begin{cases}
N(\calP)^{j(k-n)+j(j-1)/2}\bbeta(n,j)\bdelta(k-1,j)&\text{if $\chi^*(\calP)=1$,}\\
N(\calP)^{j(k-n)+j(j-1)/2}\bbeta(n,j)\bmu(k-1,j)&\text{if $\chi^*(\calP)=-1$.}
\end{cases}$$
\item[(b)]  
Suppose that $j> k$ if $\chi^*(\calP)=1$, and $j\ge k$ if $\chi^*(\calP)=-1$.
Then $$\theta(\gen L)|T'_j(\calP^2)=0.$$
\end{enumerate}
\end{cor}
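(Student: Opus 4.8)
The plan is to average the identity of Theorem 5.3 over the genus of $L$, identify the resulting scalar via a reciprocity for lattice containments at $\calP$, and then evaluate that scalar by a finite-field computation, treating $\chi^*(\calP)=1$ and $\chi^*(\calP)=-1$ separately.

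First I would apply Theorem 5.3 to each $L'\in\gen L$, divide by $o(L')$, and sum, obtaining
\[
\theta(\gen L)|T'_j(\calP^2)
=\sum_{0\le i\le j}v_i(j)\sum_{\cls L'\in\gen L}\frac{1}{o(L')}\sum_{K_{j-i}}\theta(K_{j-i};\tau),
\]
where for each $L'$ the lattice $K_{j-i}$ runs over those $K\in\gen L$ with $\calP L'\subseteq K\subseteq\calP^{-1}L'$ and $\mult_{\{L':K\}}(\calP)=\mult_{\{L':K\}}(\calP^{-1})=j-i$. Following the averaging argument of Corollary 2.4 \cite{theta II}, I would then interchange summations and group by the isometry class $\cls M$ of $K_{j-i}$. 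The containment $\calP L'\subseteq K\subseteq\calP^{-1}L'$ is equivalent to $\calP K\subseteq L'\subseteq\calP^{-1}K$, and the two multiplicity conditions are interchanged upon swapping the roles of $L'$ and $K$ together with $\calP\leftrightarrow\calP^{-1}$; hence the relation between $L'$ and $K$ is symmetric. A standard mass-formula reciprocity then gives $\sum_{\cls L'}\frac{1}{o(L')}\#\{K\simeq M:\ \dots\}=c_{j-i}/o(M)$, where $c_{j-i}$ counts the $K\in\gen L$ with $\calP M\subseteq K\subseteq\calP^{-1}M$ and both multiplicities equal to $j-i$. Since $\calP\nmid\stufe$, the local lattice $M_{\calP}$ is modular of a fixed type for every $M\in\gen L$, so $c_{j-i}$ is independent of $M$. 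Consequently the inner double sum equals $c_{j-i}\,\theta(\gen L)$, and we obtain
\[
\theta(\gen L)|T'_j(\calP^2)=\lambda_j(\calP^2)\,\theta(\gen L),\qquad
\lambda_j(\calP^2)=\sum_{0\le i\le j}v_i(j)\,c_{j-i}.
\]

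The bulk of the work, and the step I expect to be the main obstacle, is the closed-form evaluation of $\lambda_j(\calP^2)$; this is essentially the content of Theorem 3.3 \cite{theta II} and is purely combinatorial over $\F=\Ok/\calP$. Equipping $L/\calP L$ with the quadratic form $\eta'\q$ makes it a regular $2k$-dimensional quadratic $\F$-space, hyperbolic exactly when $\chi^*(\calP)=1$. The count $c_{j-i}$ is produced by the two-stage lattice construction of Proposition 5.2 (a choice of totally isotropic subspace together with an auxiliary flag), yielding the products of $\bbeta$, $\bdelta$, and $\bmu$ factors. Substituting the explicit $v_i(j)$ and $c_{j-i}$ and simplifying the resulting alternating sum—via the standard $q$-binomial identities linking $\bbeta$, $\bdelta$, and $\bmu$ over $\F$—should telescope $\sum_i v_i(j)c_{j-i}$ down to the stated $\lambda_j(\calP^2)$, with $\bdelta(k-1,j)$ appearing when $\chi^*(\calP)=1$ and $\bmu(k-1,j)$ when $\chi^*(\calP)=-1$. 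I anticipate the careful bookkeeping of these finite-field counts, and matching them against the product formulas, to be the delicate part.

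Finally, for part (b) I would argue by nonexistence of the relevant isotropic subspaces. The coefficients $\balpha_j(\Omega,\Lambda)$ entering Theorem 4.5 count totally isotropic subspaces of the $\F$-quadratic space $L/\calP L$ whose dimension is governed by $j$. This $2k$-dimensional space has Witt index $k$ when $\chi^*(\calP)=1$ and $k-1$ when $\chi^*(\calP)=-1$, so once $j>k$ (respectively $j\ge k$) no totally isotropic subspace of the required dimension exists. Every contributing count then vanishes, whence $\theta(\gen L)|T'_j(\calP^2)=0$.
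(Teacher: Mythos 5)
Your part (a) is correct and is essentially the paper's own route: the paper likewise averages Theorem 5.3 over the genus, uses the symmetry/mass-reciprocity argument of Corollary 2.4 of \cite{theta II} to identify the scalar as $\sum_{i}v_i(j)c_{j-i}$, and then evaluates that scalar by the finite-field combinatorics of Theorem 3.3 of \cite{theta II}; like the paper, you defer the closed-form evaluation to those cited identities, so this half is fine.

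Part (b), however, has a genuine gap. The coefficient $\balpha_j(\Omega,\Lambda)$ in Theorem 4.5 does not count totally isotropic subspaces of the regular $2k$-dimensional space $L/\calP L$; it counts totally isotropic subspaces of dimension $j-r_0-r_2$ in the subquotient $(\Omega\cap\Lambda)/\calP(\Omega+\Lambda)$, whose induced form $\q'$ is in general degenerate and can even be identically zero. Concretely, take any free rank-$n$ sublattice $\Omega\subseteq\calP L$ (automatically even $\n$-integral) and $\Lambda=\Omega$, so $r_0=r_2=0$: then $\q'\equiv 0$ on $\Omega/\calP\Omega$ and $\balpha_j(\Omega,\Omega)=\bbeta(n,j)\neq 0$ for every $j\le n$, no matter how $j$ compares with $k$. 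So the termwise vanishing you assert is false. Moreover, even if one knew $\theta(\gen L)|\widetilde T_j(\calP^2)=0$ for $j$ beyond the Witt index, part (b) would not follow, because $T'_j(\calP^2)=\sum_{0\le i\le j}u_i(j)\widetilde T_{j-i}(\calP^2)$ also involves the lower-index operators --- in particular $\widetilde T_0=\mathrm{id}$ with coefficient $u_j(j)\neq 0$ --- and these certainly do not annihilate $\theta(\gen L)$. Your Witt-index observation is correct, but it applies to the lattices $K_{j'}$ of Theorem 5.3 rather than to the $\balpha$'s: any $K\in\gen L$ with $\calP L\subseteq K\subseteq\calP^{-1}L$ and both multiplicities equal to $j'$ reduces, via $(\calP K+\calP L)/\calP L$, to a $j'$-dimensional totally isotropic subspace of the regular space $L/\calP L$, so such $K$ exist only for $j'$ up to the Witt index. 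That kills the out-of-range counts $c_{j'}$, but to conclude $\theta(\gen L)|T'_j(\calP^2)=0$ one must still extend the identity of Theorem 5.3 past its stated range ($j\le k$, resp.\ $j<k$) and show that the surviving alternating sum $\sum_i v_i(j)c_{j-i}$ over in-range indices cancels. That cancellation is precisely the nontrivial combinatorial content of Theorem 3.3 of \cite{theta II} which the paper's proof invokes, and it is the step your proposal replaces with an argument that does not hold.
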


\bigskip


\begin{thebibliography}{0}



\bibitem{And} A.N. Andrianov, \emph{Quadratic Forms and Hecke Operators}, Springer-Verlag,  1987.

\bibitem{CW} S. Caulk, L.H. Walling, \emph{Hecke operators on Hilbert-Siegel modular
forms}, Int. J. Number Theory {\bf 3}, No. 3 (2007), 391--420.


\bibitem{Eichler}  M. Eichler, \emph{On theta functions of real algebraic
number fields},  Acta Arith. {\bf 33} (1977), 269-292.


\bibitem {Ger} L. Gerstein, \emph{Basic Quadratic Forms}, Graduate
Studies in Math. Vol. 90, Amer. Math. Soc., Providence, 2008.


\bibitem{O'M} O.T. O'Meara, \emph{Introduction to Quadratic Forms}, Springer, Berlin, 1987.

\bibitem{thesis} L.H. Walling
\emph{Hecke operators on theta
	series attached to lattices of arbitrary rank},
Acta Arith. {\bf 54} (1990), 213-240.

\bibitem{theta I}
L.H. Walling, \emph{Action of Hecke operators on Siegel theta series I},
International J. of Number Theory {\bf 2} (2006), 169-186.

\bibitem{theta II} L.H. Walling, \emph{Action of Hecke operators on Siegel theta series II},
International J. of Number Theory {\bf 4} (2008), 981-1008.

\bibitem{W eis series} L.H. Walling, \emph{Hecke eigenvalues and relations for Siegel Eisenstein series of arbitrary degree, level, and character},  International J. Number Theory {\bf 13} No. 2 (2017), 325-370.

\bibitem{theta eis}  L.H. Walling, \emph{Explicitly realizing average Siegel theta series as linear combinations of Eisenstein series},
Ramanujan J. Math. {\bf 47} (2018), 475-499.

\end{thebibliography}
\end{document}